\newcommand{\mnote}[1]{}
\newcommand{\eqn}[2]{ \begin{equation*} #2  \end{equation*} }
\newcommand{\gath}[2]{ \begin{gather*} #2 \end{gather*} }
\newcommand{\lalign}[2]{ \begin{align} #2 \end{align} }
\newcommand{\alin}[2]{ \begin{align*} #2 \end{align*} }
\newcommand{\leqn}[2]{\begin{equation} #2 \label{#1} \end{equation}}
\newcommand{\lgath}[2]{ \begin{gather} #2 \end{gather} }
\newcommand{\sect}[2]{\section{#2}\label{#1}}
\newcommand{\subsect}[2]{\subsection{#2}\label{#1}}
\newcommand{\subsubsect}[2]{\subsubsection{#2}\label{#1}}
\theoremstyle{plain}
\newtheorem{thm}{Theorem}[section]
\newtheorem{lem}[thm]{Lemma}
\newtheorem{prop}[thm]{Proposition}
\newtheorem{cor}[thm]{Corollary}
\theoremstyle{definition}
\newtheorem{Def}[thm]{Definition}
\theoremstyle{remark}
\newtheorem{rem}[thm]{Remark}
\numberwithin{equation}{section}
\newcommand{\id}{\mathord{{\mathrm 1}\kern-0.27em{\mathrm I}}\kern0.35em}
\DeclareMathOperator{\diag}{diag}
\newcommand{\del}[1]{\partial_{#1}}
\newcommand{\AND}{{\quad\text{and}\quad}}
\DeclareMathOperator{\op}{op}
\newcommand{\Half}{\ensuremath{\textstyle\frac{1}{2}}}
\newcommand{\Third}{\ensuremath{\textstyle\frac{1}{3}}}
\newcommand{\norm}[1]{\|#1\|}
\newcommand{\ip}[2]{ \langle #1 | #2 \rangle}
\newcommand{\Bc}{\mathcal{B}{}}
\newcommand{\Ec}{\mathcal{E}{}}
\newcommand{\Hc}{\mathcal{H}{}}
\newcommand{\Lc}{\mathcal{L}{}}
\newcommand{\Nc}{\mathcal{N}{}}
\newcommand{\Rc}{\mathcal{R}{}}
\newcommand{\Xc}{\mathcal{X}{}}
\newcommand{\gt}{\tilde{g}{}}
\newcommand{\htld}{\tilde{h}{}}
\newcommand{\kt}{\tilde{k}{}}
\newcommand{\st}{\tilde{s}{}}
\newcommand{\ut}{\tilde{u}{}}
\newcommand{\Ut}{\tilde{U}{}}
\newcommand{\alphat}{\tilde{\alpha}{}}
\newcommand{\betat}{\tilde{\beta}{}}
\newcommand{\phit}{\tilde{\phi}{}}
\newcommand{\Ab}{\bar{A}{}}
\newcommand{\fb}{\bar{f}{}}
\newcommand{\Fb}{\bar{F}{}}
\newcommand{\gb}{\bar{g}{}}
\newcommand{\hb}{\bar{h}{}}
\newcommand{\Hb}{\bar{H}{}}
\newcommand{\Ub}{\bar{U}{}}
\newcommand{\Xb}{\bar{X}{}}
\newcommand{\alphab}{\bar{\alpha}{}}
\newcommand{\betab}{\bar{\beta}{}}
\newcommand{\gammab}{\bar{\gamma}{}}
\newcommand{\Uh}{\hat{U}{}}
\newcommand{\Gbb}{\mathbb{G}{}}
\newcommand{\Rbb}{\mathbb{R}{}}
\newcommand{\Tbb}{\mathbb{T}{}}
\newcommand{\Zbb}{\mathbb{Z}{}}
\newcommand{\Jch}{\check{J}{}}
\newcommand{\fv}{\mathbf{f}{}}
\newcommand{\hv}{\mathbf{h}{}}
\newcommand{\Kv}{\mathbf{K}{}}
\newcommand{\qv}{\mathbf{q}{}}
\newcommand{\uv}{\mathbf{u}{}}
\newcommand{\xv}{\mathbf{x}{}}
\newcommand{\zero}{\mathbf{0}}
\newcommand{\muv}{\boldsymbol{\mu}{}}
\newcommand{\ep}{\epsilon}
\newcommand{\Om}{ {\Omega_1}}
\begin{document}

\title[A transmission problem for quasi-linear wave equations]{A transmission problem for quasi-linear wave equations}

\author[L. Andersson]{Lars Andersson}
\address{Albert Einstein Institute, Am M\"uhlenberg 1, D-14476 Potsdam,
  Germany}
\email{lars.andersson@aei.mpg.de}

\author[T.A. Oliynyk]{Todd A. Oliynyk}
\address{School of Mathematical Sciences\\
Monash University, VIC 3800\\
Australia}
\email{todd.oliynyk@sci.monash.edu.au}


\subjclass[2010]{35L52, 35L72, 35Q75}

\begin{abstract}
We prove the local existence and uniqueness of solutions to a system of quasi-linear wave equations
involving a jump discontinuity in the lower order terms. A continuation principle is also established.
\end{abstract}

\maketitle

\sect{intro}{introduction}

Most of the visible matter in our Universe is composed of gravitating relativistic elastic matter; for example, asteroids, comets, planets and stars, including neutron stars,
are all thought of as being accurately described as elastic bodies \cite{AnderssonComer:2007,ChamelHaensel:2008}.
Due to this, it is of clear theoretical and even practical interest to have
a good analytic understanding of gravitating relativistic elastic bodies with the first step being to establish local existence and
uniqueness results.

In the non-relativistic setting of Newtonian gravity,
local existence and uniqueness theorems are available. In the approximation of a compact (non-fluid) elastic body moving in an external gravitational field, where the
gravitational self-interaction and interaction with the object
generating the external field are ignored,
local existence and uniqueness has been established
in \cite{ShibataNakamura:1989}.  Local existence and uniqueness results for the general case, which includes
gravitational self and mutual interactions between adjacent (non-fluid) elastic bodies, are given in \cite{Andersson_et_al:2011}.
See also \cite{LindbladNordgren:2009} for related results on self-gravitating, incompressible fluid bodies.  In contrast, much less is known in the relativistic setting where local existence
and uniqueness theorems are lacking except in certain restricted situations \cite{ChoquetFriedrich:2006,KindEhlers:1993,Rendall:1992}.

Relativistic compact elastic bodies are governed by the Einstein field equations coupled through the stress-energy
tensor to the field equations of relativistic elasticity. The difficulty in establishing local existence and uniqueness results can be attributed to
two sources:
the free boundary arising from the evolving matter-vacuum interface, and the irregularity in the stress-energy
tensor across the matter-vacuum interface. For elastic bodies, there are essentially two distinct types of irregularities.
The first type corresponds to gaseous fluid
bodies where the proper energy density monotonically decreases in a neighborhood of the vacuum boundary and vanishes identically
there. In this situation, the fluid evolution equations become degenerate and are no longer hyperbolic at the boundary leading to
severe analytic difficulties. The second type of irregularity that occurs for elastic bodies is where the proper energy density has
a finite (positive) limit at the vacuum boundary. Examples of this type are liquid fluids and solid elastic bodies. This case leads to a
jump discontinuity in the stress energy tensor across the vacuum boundary.

Our main motivation for this article is to develop local existence and uniqueness results that are applicable
to the gravitational part of the initial value problem (IVP) for gravitating relativistic elastic bodies that are not fluids\footnote{We recall that relativistic fluids are a special case of relativistic elastic matter.} and have the second type of discontinuity.
For such elastic bodies, it is well known from \cite{BeigSchmidt:2003,BeigWernig:2007}, see also Section \ref{disc}, when harmonic coordinates are employed and the material representation is employed, that the gravitational
component of the field equations consists of a system of non-linear wave equations with
a jump discontinuity at the matter-vacuum boundary while the elastic component consists of a non-linear system of wave equations
with Neumann boundary conditions. This leads us to consider IVPs of the form\footnote{The notation used in this article for coordinates, indices, partial derivatives,
function spaces, and the
like can be found in Section \ref{prelim}.}
\lalign{waveA}{
\del{\mu}\bigl(A^{\mu \nu}(U) \del{\nu} U\bigr)  &= F(U,\del{}U)
+ \chi_{\Omega} H(U, \del{} U) \quad \text{in $[0,T]\times \Rbb^n$}, \label{waveA.1} \\
(U,\del{t}U)|_{t=0} & = (\Ut_0,\Ut_1) \quad \text{in $\Rbb^n$} \label{waveA.2},
}
where
\begin{enumerate}
\item[(i)] $\Omega$ is a bounded open set in $\Rbb^n$ with smooth boundary,
\item[(ii)] $U(\xv) = (U^1(\xv),\ldots,U^{N}(\xv))$ is vector valued,
\item[(iii)] $A(U)=(A^{\mu\nu}(U))$, $F=(F^I(U,\del{}U))$ and $H=(H^I(U,\del{}U))$  (I=1,\ldots,N) are smooth maps with $F(0,0)=0$, and
\item[(iv)] for some $\gamma,\kappa > 0$, $A^{\mu\nu}(U)$ satisfies
\leqn{waveAa}{
\frac{1}{\gamma} |\xi|^2 \leq  A^{ij}(U)\xi_i \xi_j \leq \gamma |\xi|^2  \AND
A^{00}(U) \leq -\kappa
}
for all $(U,\xi) \in \Rbb^N\times \Rbb^n$.
\end{enumerate}

In Section \ref{disc}, we describe how the results of this article can be used in conjunction with the local existence theory from \cite{Koch:1993} to establish the local existence and uniqueness of solutions
that represent gravitating relativistic compact elastic bodies. The complete local existence and uniqueness proof will be provided in a separate article \cite{Andersson_et_al:2013}.
Aside from this application, we believe that the results of this
paper are of independent interest and may be useful for other initial value problems involving systems of wave equations with
lower order coefficients that have a jump discontinuity across a fixed boundary.

Due to the discontinuity in the wave equation \eqref{waveA.1} arising from the
term $\chi_{\Omega} H(U, \del{} U)$, the initial value problem (IVP) \eqref{waveA.1}-\eqref{waveA.2} is a \emph{transmission problem}, that is, a problem where
we can view that total solution as comprised of an interior solution and an exterior solution that are appropriately ``matched'' across the dividing interface $\del{}\Omega$.
Due to the jump discontinuity across $\del{}\Omega$, standard $L^2$ Sobolev spaces $H^s(\Rbb^n)$ do not
provide a suitable setting for establishing the local existence and uniqueness of solutions, and we use instead
the intersection spaces $\Hc^{k,s}(\Rbb^n) = H^s(\Omega)\cap H^s(\Rbb^n\setminus \Omega)\cap H^k(\Rbb^n)$. Similar to the situation that arises for initial boundary value problems,
we also find it necessary to choose initial data
\leqn{compat1}{
(\Ut_0,\Ut_1) \in \Hc^{2,s+1}(\Rbb^n)\times \Hc^{2,s}(\Rbb^n) \quad s\in \Zbb_{>n/2}
}
that satisfy \emph{compatibility conditions} given by
\leqn{compat2}{
\Ut_\ell := \del{t}^\ell U |_{t=0} \in \Hc^{m_{s+1-\ell},s+1-\ell}(\Rbb^n) \quad \ell=2,\ldots,s.
}
Here the time derivatives $\del{t}^\ell U |_{t=0}$ $\ell \geq 2$ are generated
from the initial data \eqref{compat1} by formally differentiating \eqref{waveA.1} with respect to
$t$ at $t=0$. To see how this works, we note that $\del{t}^2 U|_{t=0}$ can be computed by substituting
the initial data \eqref{compat1} in \eqref{waveA.1} and then solving
for $\del{t}^2 U|_{t=0}$. Differentiating \eqref{waveA.1} formally with respect to $t$ while substituting in
the lower time derivatives $\ell=0,1,2$ at $t=0$ then uniquely determines the $\ell=3$ time derivative at $t=0$ in terms of the initial data.
Continuing on by formally differentiating the evolution equations with respect to $t$, it is not difficult to see that
the higher time derivatives $\ell=2,\ldots,s$ at $t=0$ are uniquely determined in terms of the initial data.

We are now ready to state the main local existence and uniqueness result.
\begin{thm} \label{mainthmA}
Suppose $n\geq 3$, $s\in \Zbb_{>n/2}$ and $(\Ut_0,\Ut_1) \in \Hc^{s+1}(\Rbb^n)\times \Hc^{s}(\Rbb^n)$ satisfy the
compatibility conditions \eqref{compat2}. Then there exist a $T>0$ and a map
$U\in CX_{T}^{s+1}(\Rbb^n)$ such that $U$ is the unique solution in $CX_{T}^2(\Rbb^n) \cap \bigcap_{\ell=0}^1 C^\ell\bigl([0,T),W^{1-\ell,\infty}(\Rbb^n)\bigr)$
to the initial value problem \eqref{waveA.1}-\eqref{waveA.2}. Moreover, if $\norm{u}_{W^{1,\infty}((0,T)\times \Tbb^n)} < \infty$, then there exists
a $T^*>T$ such that the $u$ can be continued uniquely to a solution on $[0,T^*)\times \Tbb^n$.
\end{thm}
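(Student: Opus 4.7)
My plan is to establish Theorem \ref{mainthmA} by a Picard iteration built on a well-posedness theory for the \emph{linear} transmission problem associated with \eqref{waveA.1}, combined with a Grönwall-type bootstrap for the continuation claim. Given an iterate $U^{(k)}$ with the relevant intersection-space regularity, define $U^{(k+1)}$ as the solution of the linear problem
\[
\del{\mu}\bigl(A^{\mu\nu}(U^{(k)})\del{\nu} U^{(k+1)}\bigr) = F(U^{(k)},\del{}U^{(k)}) + \chi_{\Omega}H(U^{(k)},\del{}U^{(k)})
\]
with data $(\Ut_0,\Ut_1)$, which is well posed by the hyperbolicity \eqref{waveAa}. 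Take the starting iterate $U^{(0)}$ to be a polynomial in $t$ of degree $s$ whose $\ell$-th time derivative at $t=0$ equals $\Ut_\ell$; the compatibility conditions \eqref{compat2} place each $\Ut_\ell$ in $\Hc^{m_{s+1-\ell},s+1-\ell}(\Rbb^n)$, so this $U^{(0)}$ lies in the correct class, and these conditions pass to every subsequent iterate because $\del{t}^\ell U^{(k+1)}|_{t=0}$ is determined algebraically from the data and $U^{(k)}$ in exactly the same way as for the nonlinear problem.

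The core technical step, which I anticipate as the main obstacle, is the linear a priori estimate
\[
\norm{U^{(k+1)}}_{CX_T^{s+1}} \leq \Phi\bigl(\norm{U^{(k)}}_{CX_T^{s+1}}\bigr)\Bigl(\norm{(\Ut_0,\Ut_1)}_{\Hc^{s+1}\times \Hc^s} + T\,\Psi(\norm{U^{(k)}}_{CX_T^{s+1}})\Bigr).
\]
To prove it one cannot commute an arbitrary spatial derivative through \eqref{waveA.1}, because differentiating $\chi_\Omega H$ would produce a distribution supported on $\del{}\Omega$. Instead one applies only \emph{admissible} operators: $\del{t}$ and smooth vector fields on $\Rbb^n$ tangent to $\del{}\Omega$. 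These commute with the restriction to $\Omega$ and to $\Omc$, so their commutators leave linear transmission problems of the same type on each side, with controlled error terms. Standard $L^2$ energy estimates applied side by side on $\Omega$ and on $\Rbb^n\setminus\Omegab$ then yield one-sided $H^{s+1}$ bounds of $U^{(k+1)}$, while normal derivatives of the remaining orders are reconstructed algebraically from the equation using $A^{00}\leq -\kappa$. The global $H^k$ component of the $\Hc^{k,s+1}$ regularity comes from the distributional form of the equation, which automatically forces the conormal jump $[A^{\mu\nu}\del{\nu}U^{(k+1)}\nu_\mu]_{\del{}\Omega}$ to vanish and matches the two sides to the required global order. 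Moser-type composition estimates for $F$ and $H$ are then applied on each side separately, where $s>n/2$ together with $n\geq 3$ supply the needed Sobolev embeddings.

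With the linear estimate in hand, a uniform bound on $\norm{U^{(k)}}_{CX_T^{s+1}}$ for sufficiently small $T$ follows by Grönwall. Convergence is obtained in the low-regularity norm $CX_T^{2}\cap\bigcap_{\ell=0}^1 C^\ell([0,T),W^{1-\ell,\infty})$ by applying the same side-by-side energy method to the difference $U^{(k+1)}-U^{(k)}$, whose equation involves only first-order differences of the coefficients and hence closes with strictly less regularity; uniqueness in the class stated follows identically from subtracting two solutions. For the continuation statement, the hypothesis $\norm{U}_{W^{1,\infty}}<\infty$ gives uniform bounds on $A^{\mu\nu}(U)$, $F$, $H$ and their first derivatives on $[0,T)$, so the same linear estimate becomes a closed Grönwall inequality for $t\mapsto \norm{U(t,\cdot)}_{\Hc^{s+1}}+\norm{\del{t}U(t,\cdot)}_{\Hc^{s}}$. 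The resulting finite bound at $t=T$ produces Cauchy data at time $T$ that still satisfies the compatibility conditions at level $s$, and a second application of the local existence part of the theorem extends $U$ to some $T^*>T$.
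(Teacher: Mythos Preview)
Your Picard iteration framework and the low-norm contraction match the paper's strategy (Section~\ref{eandu}), but the heart of the argument---the linear a priori estimate---diverges substantially, and the paper explicitly warns that your proposed route does not close here.

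You propose commuting tangential operators ($\del{t}$ plus spatial vector fields tangent to $\del{}\Omega$) through the equation, running hyperbolic energy estimates, and then recovering normal derivatives algebraically. The introduction states that precisely this fails: ``we are not able to obtain estimates for all of the derivatives by differentiating tangentially to the space-time boundary $[0,T]\times\del{}\Omega$ and then using the evolution equations to recover the missing estimate for the derivative normal to the boundary as was done by Koch.'' Instead the paper differentiates \emph{only in $t$}; each $u_k=\del{t}^k u$ for $k\le s-1$ is viewed as the solution of an elliptic equation $(\Delta-\psi)u_k = u_{k+2}+\cdots+\chi_{\Omega_1}h_k$, and the full spatial regularity in $\Hc^{2,s+1-k}$ is obtained from a transmission potential-theory estimate (Proposition~\ref{potprop}) rather than from algebraic normal recovery. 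Only the top derivative $\del{t}^s u$ is handled by a hyperbolic energy estimate. To make the variable-coefficient elliptic operator a perturbation of $\Delta-\psi$ invertible by a Neumann series, the paper first localizes near $\del{}\Omega$, rescales by a small parameter $\delta$, and projects to the torus (Sections~\ref{linit}--\ref{linrs}); this scaling machinery is what produces the required smallness (Theorem~\ref{linthm}) and is entirely absent from your outline.

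Two specific gaps. First, your ``side-by-side'' energy estimates on $\Omega$ and $\Omega^c$ leave boundary terms involving the jump of $\nu_i A^{i\nu}\del{\nu}w$ across $\del{}\Omega$ for the tangentially differentiated unknown $w$; after commutation this jump is no longer zero, and you have not indicated how it is controlled. (Incidentally, algebraic recovery of the spatial normal derivative uses the ellipticity of $A^{ij}$, not $A^{00}\le-\kappa$.) Second, and more seriously, your continuation argument assumes the linear estimate carries constants depending only on $\norm{U}_{W^{1,\infty}}$. The paper's elliptic route does \emph{not} yield such a tame estimate---the Neumann-series inversion depends on the high norm $\norm{b}_{\Ec^s}+\norm{Db}_{\Ec^{s-1}}$---which is exactly why the continuation principle is proved by a separate scaling argument (Proposition~\ref{contprop}) rather than by the direct Gr\"onwall bootstrap you describe. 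If your tangential method worked it would indeed give tame constants via Moser and make continuation easy; but that is precisely the route the authors report as obstructed.
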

The proof of this theorem can be found in Section \ref{exist} and relies on a strategy similar to the one employed by Koch in \cite{Koch:1993}
to
establish the existence and uniqueness of solutions to fully non-linear wave equations on bounded domains with Neumann or Dirichlet boundary
conditions. Koch's method involves differentiating the evolution equation $s$ times with respect to $t$ for $s$ sufficiently large. He then views the equations involving the lower order time derivatives $\del{t}^\ell u$
$(\ell=0,\ldots,s-1)$ as a system of coupled elliptic equations for the purpose of obtaining estimates and estimates the top time derivative $\del{t}^s u$ using hyperbolic energy estimates. This allows him to avoid directly differentiating in
directions normal to the boundary. For us, this strategy allows us to avoid differentiating the term $\chi_\Omega H(U,\del{} U)$ across $\del{}\Omega$ where it is discontinuous.

Although the arguments used in this article are structurally similar to those employed in Koch, there are some differences. One difference is that the
elliptic equations that arise in this article are not of a standard type due to
the presence of the discontinuous term $\chi_\Omega H(U,\del{} U)$. As a consequence, we cannot, as did Koch, appeal
to standard elliptic estimates, and instead we employ potential theory to derive the desired estimates. Another
distinction is that we are not able to obtain estimates for all of the derivatives by differentiating tangentially
to the space-time boundary $[0,T]\times\del{}\Omega$ and then using the evolution equations to recover the missing estimate for the derivative normal
to the boundary as was done by Koch in \cite{Koch:1993}. One immediate consequence of this is that we cannot employ Koch's strategy
to derive a continuation principle and instead must argue differently.

\begin{rem} \label{waveArem}
$\;$

\begin{itemize}
\item[(i)]
The assumptions on the IVP \eqref{waveA.1}-\eqref{waveA.2} can easily be relaxed so that
\begin{itemize}
\item[(a)]$A$, $F$ and $H$ depend explicitly on $\xv \in \Rbb^{n+1}$, i.e. $A=A(\xv,U)$, $F=F(\xv,U,\del{}U)$, and
$H=H(\xv,U,\del{}U)$, and are defined for $(\xv,U,\del{} U) \in \Rbb^{n+1}\times \mathcal{U}\times \mathcal{V}$
with $\mathcal{U}$ and $\mathcal{V}$ open in $\Rbb^N$ and $\Rbb^{(n+1)\times N}$, respectively,
\item[(b)] $A$ and $\{F,H\}$ are $s+1$ and $s$ times continuously differentiable in all variables, respectively,
where $s\in \Zbb_{>n/2}$, and
\item[(c)] the inequality \eqref{waveAa} holds for $(\xv,U,\xi) \in \Rbb^{n+1}\times\mathcal{U}\times \Rbb^n$.
\end{itemize}
\item[(ii)] The following generalizations of Theorem \ref{mainthmA} also hold.
\begin{itemize}
\item[(a)] The continuous dependence of the solutions from Theorem \ref{mainthmA} on the initial data satisfying the compatibility conditions \eqref{compat2} can be established using similar arguments as in \cite{Koch:1993}.
\item[(b)] Theorem \ref{mainthmA} is also valid for quasi-linear wave equations
\eqn{genA}{
A^{\mu \nu}(U,\del{}U) \del{\mu}\del{\nu} U  = F(U,\del{}U)
+ \chi_{\Omega} H(U, \del{} U) \quad \text{in $[0,T]\times \Rbb^n$},
}
provided that we take $s>n/2+1$ and change the continuation principle to that of bounding
$\norm{U}_{W^{2,\infty}((0,T)\times \Rbb^n)}$.
\end{itemize}
\end{itemize}
\end{rem}
\sect{prelim}{Preliminaries}

\subsect{notation}{Notation} In this article, we use $(x^{\mu})_{\mu=0}^n$  to denote Cartesian coordinates
on $\Rbb^{n+1}$, and we use  $x^0$ and $t$, interchangeability, to denote the time coordinate,
and $(x^i)_{i=1}^n$ to denote the spatial coordinates. We also use $x=(x^1,\ldots,x^n)$
and $\xv = (x^0,\ldots,x^n)$ to denote spatial and spacetime points, respectively.

Partial derivatives are denoted by
\eqn{partial}{
\del{\mu} = \frac{\partial \;}{\partial x^\mu},
}
and
we use $Du(x) = (\del{1}u(x),\ldots,\del{n}u(x))$ and
$\del{}u(\xv) =  (\del{0}u(\xv),Du(\xv))$ to denote the spatial and spacetime gradients, respectively.
For time derivatives, we often employ the notation
\eqn{ft}{
u_r := \del{t}^r u,
}
and use
\eqn{fvect}{
\uv_r = (u_1, u_2, \ldots, u_r )^{\text{Tr}}
}
to denote the collection of partial derivatives of $u$ with respect to $t$.

\subsect{sets}{Sets}
The following subsets of $\Rbb^n$ will be of interest:
\alin{setsdef}{
Q^-_\delta &= \{\: (x^1,\ldots,x^n) \: |\: -\delta < x^1,x^2,\ldots,x^{n-1} < \delta, \quad -\delta < x^n < 0 \: \}, \\
Q^+_\delta &= \{\: (x^1,\ldots,x^n) \: |\: -\delta < x^1,x^2,\ldots,x^{n-1} < \delta, \quad 0 < x^n < \delta \: \} \\
\intertext{and}
Q_\delta &= \{\: (x^1,\ldots,x^n) \: |\: -\delta \leq  x^1,x^2,\ldots,x^{n} \leq \delta  \:\}.
}
We will also need to identify the opposite sides of the box $Q_\delta$ so that\footnote{Here, $\sim$ denotes the equivalence relation on
$Q_\delta$ determined by the identification of the opposite sides of the boundary.}
\eqn{Tbbdefa}{
 Q_\delta/\sim  \approx\Tbb^n .
}
We note that  under this identification, the Carestian coordinates $x=(x^i)$ on $\Rbb^n$  define periodic coordinates on
$\Tbb^n$.
The following open and connected subset of $\Tbb^n$ with smooth boundary will also be of interest
\eqn{Omegadef}{
\Omega_\delta  =  Q^+_{\delta}/\sim.
}

Finally, given an open set $\Omega$ of $\Gbb^n$, where
\eqn{Gbbdef}{
\text{$\Gbb^n$ = $\Tbb^n$ or $\Rbb^n$,}
}
we let $\chi_\Omega$ denote the characteristic function, and we use $\Omega^c$ to
denote the interior of its complement, that is
\eqn{Omeagcdef}{
\Omega^c := \Gbb^n \setminus \overline{\Omega}.
}

\subsect{funct}{Function spaces}

\subsubsect{sfunct}{Spatial function spaces}

Given an open set $\Omega \subset \Gbb^n$,
we define the Banach spaces
\eqn{HksdefT}{
\Hc^{k,s}(\Gbb^n) =  H^k(\Gbb^n)\cap  H^s(\Omega)\cap H^s(\Omega^c)  \quad (s\geq k; k,s\in\Zbb_{\geq 0})
}
with norm
\eqn{HksnormT}{
\norm{u}_{\Hc^{k,s}(\Gbb^n)}^2 = \norm{u}_{H^s(\Omega)}^2 + \norm{u}_{ H^k(\Gbb^n)}^2 + \norm{ u}^2_{H^s(\Omega^c)},
}
and
\eqn{HksdefQ}{
\Hc^{k,s}(Q_\delta) =  H^k(Q_\delta)\cap  H^s(Q^+_\delta)\cap H^s(Q^-_\delta)  \quad (s\geq k; k,s\in\Zbb_{\geq 0})
}
with norm
\eqn{HksnormTa}{
\norm{u}_{\Hc^{k,s}(Q_\delta)}^2 = \norm{u}_{H^s(Q^+_\delta)}^2 + \norm{u}_{ H^k(Q_\delta)}^2 + \norm{ u}^2_{H^s(Q^-_\delta)}.
}
We also define the following auxiliary spaces
\gath{Xdef}{
X^{k,r} = \prod_{\ell=0}^{r} \Hc^{2,k-\ell}(\Tbb^n) \quad  (k-r\geq 2), \qquad \Xc^{k,r} = \prod_{\ell=0}^{r} \Hc^{0,k-\ell}(\Tbb^n) \quad (k-r\geq 0) 
\intertext{and}
Y^{k,r} = \prod_{\ell=0}^{r} H^{k-\ell}(\Omega)  \quad (k-r\geq 0,\; \Omega \subset \Tbb^n) 
}
with norms
\gath{Xnorm}{
\norm{\uv_r}_{X^{k,r}}^2 = \sum_{\ell=0}^r \norm{u_{\ell}}^2_{\Hc^{2,k-\ell}(\Tbb^n)},\qquad \norm{\uv_r}_{\Xc^{k,r}}^2 = \sum_{\ell=0}^r \norm{u_{\ell}}^2_{\Hc^{0,k-\ell}(\Tbb^n)},
\intertext{and}
\norm{\uv_r}_{Y^{k,r}}^2 = \sum_{\ell=0}^r \norm{u_{\ell}}^2_{H^{k-\ell}(\Omega)},
}
respectively,  where, as above, we employ the vector notation
\eqn{fvectA}{
\uv_r = (u_1,\ldots,u_r)^{\text{Tr}}.
}

\subsubsect{stfunct}{Spacetime function spaces}

Given an open subset $\Omega \subset \Gbb^n$ and a $T>0$, we define the spaces
\leqn{XTdef}{
X^s_T(\Gbb^n) = \bigcap_{\ell=0}^s W^{\ell,\infty}\bigl([0,T),\Hc^{m_{s-\ell},s-\ell}(\Gbb^n)\bigr),
}
where
\eqn{mdef}{
m_\ell = \begin{cases} 2 & \text{if $\ell \geq 2$} \\
\ell & \text{ otherwise } \end{cases},
}
\leqn{XcTdef}{
\Xc^s_T(\Gbb^n) = \bigcap_{\ell=0}^s W^{\ell,\infty}\bigl([0,T),\Hc^{0,s-\ell}(\Gbb^n)\bigr)
}
and
\leqn{YTdef}{
Y^s_T(\Omega) =  \bigcap_{\ell=0}^s W^{\ell,\infty}\bigl([0,T),H^{s-\ell}(\Omega)\bigr).
}

We also define the following \emph{energy norms}:
\alin{XTnormA}{
\norm{u}_{E^s(\Gbb^n)}^2 &= \sum_{\ell=0}^s \norm{\del{t}^\ell u}^2_{\Hc^{m_{s-\ell},s-\ell}(\Gbb^n)},\\
\norm{u}_{\Ec^s(\Gbb^n)}^2 &= \sum_{\ell=0}^s \norm{\del{t}^\ell u}^2_{\Hc^{0,s-\ell}(\Gbb^n)}, \\
\norm{u}^2_{E^s(\Omega)} &=  \sum_{\ell=0}^s \norm{\del{t}^\ell u}^2_{H^{s-\ell}(\Omega)}, \\
\norm{u}_{E^{s,r}(\Gbb^n)}^2 &= \sum_{\ell=0}^r \norm{\del{t}^\ell u}^2_{\Hc^{m_{s-\ell},s-\ell}(\Gbb^n)} \quad (s\geq r)
\intertext{and}
\norm{u}_{E(\Gbb^n)}^2 &= \norm{u}_{E^1(\Gbb^n)}^2 = \norm{u}_{H^1(\Gbb^n)}^2 + \norm{\del{t}u}^2_{L^2(\Gbb^n)}.
}
In terms of these energy norms, we can write the norms of the spaces \eqref{XTdef}, \eqref{XcTdef} and \eqref{YTdef} as
\alin{XTnormB}{
\norm{u}_{X^s_T(\Gbb^n)} & = \sup_{0\leq t < T} \norm{u(t)}_{E^s(\Gbb^n)}, \\
\norm{u}_{\Xc^s_T(\Gbb^n)} & = \sup_{0\leq t < T} \norm{u(t)}_{\Ec^s(\Gbb^n)}
\intertext{and}
\norm{u}_{Y^s_T(\Omega)} & =  \sup_{0\leq t < T} \norm{u(t)}_{E^s(\Omega)},
}
respectively.

Finally, we define the following subspace of \eqref{XTdef}: 
\alin{XVdef}{
CX^s_T(\Gbb^n) &= \bigcap_{\ell=0}^s C^{\ell}\bigl([0,T),\Hc^{m_{s-\ell},s-\ell}(\Gbb^n)\bigr). 
}

\subsect{cost}{Estimates and constants}

We employ that standard notation
\eqn{lesssimA}{
a \lesssim b
}
for inequalities of the form
\eqn{lesssimB}{
a \leq C b
}
in situations where the precise value or dependence on
other quantities of the constant $C$ is not required.  On the other hand,  when the dependence of the constant
on other inequalities needs to be specified, for example if the constant depends on the norms $\norm{u}_{L^\infty(\Tbb^n)}$ and $\norm{v}_{L^\infty(\Omega)}$, we use the notation
\eqn{lesssimC}{
C = C(\norm{u}_{L^\infty(\Tbb^n)},\norm{v}_{L^\infty(\Omega)}).
}
Constants of this type will always be non-negative, non-decreasing, continuous functions of their arguments.

\subsect{domain}{A simple extension operator}

Given an open set $\Omega$ in $\Gbb^n$, we define the trivial extension operator by
\eqn{chiext}{
\chi_\Omega u(x) = \begin{cases} u(x) & \text{if $x\in \Omega$} \\ 0 & \text{otherwise} \end{cases}.
}
Clearly, this defines a bounded linear operator from $L^p(\Omega)$ to $L^p(\Gbb^n)$.


\subsect{mollifier}{Smoothing operator}


\begin{prop} \label{mollprop}
Suppose $\Omega$ is an open set in  $\Tbb^n$ with a
smooth boundary,  $1\leq p < \infty$ and $s \in \Zbb_{\geq 0}$. Then
there exists a family of continuous linear maps
\eqn{mollprop1}{
J_\lambda \: :\: W^{s,p}(\Tbb^n) \longrightarrow W^{s,p}(\Tbb^n) \quad \lambda > 0
}
satisfying
\gath{mollprop2}{
\norm{J_\lambda \chi_\Omega u}_{W^{k,p}(\Tbb^n)} < \infty \quad k\geq s,\\
 \norm{J_\lambda \chi_\Omega u}_{W^{s,p}(\Omega)} \lesssim \norm{u}_{W^{s,p}(\Omega)} \quad 0<\lambda \leq 1 \\
\intertext{and}
\lim_{\lambda\searrow 0} \norm{J_\lambda \chi_\Omega u - u}_{W^{s,p}(\Omega)}  = 0
}
for all $u\in W^{s,p}(\Omega)$.
\end{prop}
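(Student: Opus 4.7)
The plan is ``extend, then mollify.'' Since $\Omega\subset\Tbb^n$ has smooth boundary, a standard Sobolev extension theorem (by partition of unity, local boundary straightening, and higher-order reflection, \`a la Stein) produces a bounded linear operator
\[
E\colon W^{s,p}(\Omega)\longrightarrow W^{s,p}(\Tbb^n)
\]
satisfying $(Eu)|_\Omega=u$. Pick a non-negative $\rho\in C^\infty_c(\Rbb^n)$ supported in the unit ball with $\int\rho\,dx=1$, and set $\rho_\lambda(x):=\lambda^{-n}\rho(x/\lambda)$, viewed via periodization as a smooth, $L^1$-normalised kernel on $\Tbb^n$. Define
\[
J_\lambda f:=\rho_\lambda * E(f|_\Omega),\qquad f\in W^{s,p}(\Tbb^n).
\]

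Linearity is clear, and continuity of $J_\lambda$ on $W^{s,p}(\Tbb^n)$ is the composition of three bounded maps: restriction $W^{s,p}(\Tbb^n)\to W^{s,p}(\Omega)$, the extension $E$, and convolution $g\mapsto\rho_\lambda*g$, which satisfies $\norm{\rho_\lambda*g}_{W^{s,p}(\Tbb^n)}\le\norm{g}_{W^{s,p}(\Tbb^n)}$ by Young's inequality and $\norm{\rho_\lambda}_{L^1}=1$. The decisive feature for the designated input $\chi_\Omega u$ is that $(\chi_\Omega u)|_\Omega=u$, whence
\[
J_\lambda(\chi_\Omega u)=\rho_\lambda*(Eu).
\]

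From this identity the three conclusions are immediate. The first bound follows because $\rho_\lambda\in C^\infty_c$ makes $\rho_\lambda*(Eu)$ smooth on the compact manifold $\Tbb^n$, hence finite in every $W^{k,p}(\Tbb^n)$. The second bound is
\[
\norm{J_\lambda(\chi_\Omega u)}_{W^{s,p}(\Omega)}\le\norm{\rho_\lambda*Eu}_{W^{s,p}(\Tbb^n)}\le\norm{Eu}_{W^{s,p}(\Tbb^n)}\lesssim\norm{u}_{W^{s,p}(\Omega)},
\]
using the boundedness of $E$, and is uniform in $\lambda>0$. The convergence statement is the classical mollification property $\rho_\lambda*g\to g$ in $W^{s,p}(\Tbb^n)$ applied to $g=Eu$, combined with $Eu|_\Omega=u$.

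\textbf{Main obstacle.} There is essentially none; this is an assembly of standard ingredients, the only non-elementary input being the Sobolev extension theorem for smoothly bounded domains. The point worth emphasising is \emph{why} one must extend \emph{before} mollifying: the $s$-th derivatives of the naive candidate $\rho_\lambda*\chi_\Omega u$ feel the jump of $\chi_\Omega u$ across $\partial\Omega$, concentrating on a $\lambda$-neighbourhood of the boundary and scaling like $\lambda^{-s}$ in the relevant $L^p$-sense, which would ruin any uniform bound. Replacing $\chi_\Omega u$ by the smooth extension $Eu$ removes this jump, so that $\rho_\lambda$ operates on a function already in $W^{s,p}(\Tbb^n)$ and behaves as a standard approximate identity.
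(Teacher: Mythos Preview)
Your proof is correct and takes a genuinely different route from the paper. The paper simply cites Theorem~2.29 of Adams--Fournier together with the ``Approximation by Smooth Functions on $\Omega$'' section of that book: that construction uses a partition of unity subordinate to a boundary cover, translates each boundary piece a small amount along the segment direction (so that its support is pushed into $\Omega$ and the subsequent mollification never feels $\partial\Omega$), and then mollifies; the torus case is reduced to $\Rbb^n$ by a further partition of unity.

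Your ``extend, then mollify'' approach replaces the translation trick by the Stein extension. The trade-off: the paper's route is more elementary, needing only the segment property and standard mollification, while your route is cleaner once the extension operator is in hand and makes the $\lambda$-uniform bound immediate via Young's inequality. One small point worth making explicit in your write-up: since $\chi_\Omega u$ lies only in $L^p(\Tbb^n)$ in general, not in $W^{s,p}(\Tbb^n)$, you are implicitly using that the Stein operator is simultaneously bounded on all $W^{k,p}$, $k\ge 0$, so that $J_\lambda f=\rho_\lambda*E(f|_\Omega)$ is well defined on $L^p(\Tbb^n)$ and the stated map $W^{s,p}(\Tbb^n)\to W^{s,p}(\Tbb^n)$ is its restriction.
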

\begin{proof}
On $\Rbb^n$, the proof follows directly from \cite[Theorem 2.29]{AdamsFournier:2003} and the discussion in the section \emph{Approximation by Smooth Functions
on $\Omega$} starting on p. 65 of the same reference. On $\Tbb^n$, the proof follows from using a smooth partition of unity to decompose
functions into a finite sum of functions to which
the results on $\Rbb^n$ apply.
\end{proof}

\begin{cor} \label{mollcor}
Suppose $1\leq p < \infty$, $m\in \Zbb_{\geq 0}$, $s \in \Zbb_{\geq m}$, and let $J_\lambda$ be as defined in Proposition \ref{mollprop}.
Then $J_\lambda$ is a well-defined, continuous linear operator on $\Hc^{m,s}(\Tbb^n)$ satisfying
\gath{mollcor1}{
\norm{J_\lambda  u}_{\Hc^{\ell,k}(\Tbb^n)} < \infty \quad k\geq s,\; \ell \geq m, \; k\geq \ell, \\
 \norm{J_\lambda  u}_{\Hc^{m,s}(\Tbb^n)} \lesssim \norm{u}_{\Hc^{m,s}(\Omega)} \quad 0<\lambda \leq 1
\intertext{and}
\lim_{\lambda\searrow 0} \norm{J_\lambda  u - u}_{\Hc^{m,s}(\Tbb^n)}  = 0
}
for all  $u \in \Hc^{m,s}(\Tbb^n)$.
\end{cor}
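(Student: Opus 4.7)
The strategy is to reduce the corollary to Proposition \ref{mollprop} by decomposing a given $u\in\Hc^{m,s}(\Tbb^n)$ into its one-sided pieces. Write $u = \chi_\Omega u + \chi_{\Omega^c} u$ a.e.\ on $\Tbb^n$; by assumption $u|_\Omega \in H^s(\Omega)$ and $u|_{\Omega^c}\in H^s(\Omega^c)$, and since $\Omega^c$ is also an open set with smooth boundary, Proposition \ref{mollprop} (taking $p=2$) applies to both pieces. By linearity of $J_\lambda$,
\[
J_\lambda u = J_\lambda \chi_\Omega u + J_\lambda \chi_{\Omega^c} u,
\]
so I can analyze each summand separately using the proposition and its mirror version with $\Omega$ replaced by $\Omega^c$.

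For the finiteness assertion, the first bullet of the proposition yields $J_\lambda\chi_\Omega u\in W^{k,2}(\Tbb^n)=H^k(\Tbb^n)$ for any $k\geq s$, and symmetrically $J_\lambda \chi_{\Omega^c}u\in H^k(\Tbb^n)$, so $J_\lambda u\in H^k(\Tbb^n)$, from which $\|J_\lambda u\|_{\Hc^{\ell,k}(\Tbb^n)}<\infty$ is immediate for any $\ell\in[m,k]$ via restriction to $\Omega$ and $\Omega^c$. For the uniform estimate, observe that the $H^m(\Tbb^n)$ contribution to $\|J_\lambda u\|^2_{\Hc^{m,s}(\Tbb^n)}$ is dominated by the sum of the $H^s(\Omega)$ and $H^s(\Omega^c)$ contributions (since $m\leq s$), so it suffices to bound $\|J_\lambda u\|_{H^s(\Omega)}$ and $\|J_\lambda u\|_{H^s(\Omega^c)}$. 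The triangle inequality splits $\|J_\lambda u\|_{H^s(\Omega)}\leq \|J_\lambda \chi_\Omega u\|_{H^s(\Omega)} + \|J_\lambda \chi_{\Omega^c}u\|_{H^s(\Omega)}$; the first summand is directly bounded by $\|u|_\Omega\|_{H^s(\Omega)}$ via the second bullet of the proposition, and the second is a cross term. Symmetric treatment handles the $H^s(\Omega^c)$ piece, and convergence proceeds along the same splitting, with the diagonal parts vanishing as $\lambda\searrow 0$ by the third bullet of the proposition.

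The main obstacle is controlling the cross terms $\|J_\lambda \chi_{\Omega^c}u\|_{H^s(\Omega)}$ and $\|J_\lambda \chi_\Omega u\|_{H^s(\Omega^c)}$, which are not directly addressed by Proposition \ref{mollprop}: that result describes the action of $J_\lambda$ on $\chi_\Omega v$ with $v\in W^{s,p}(\Omega)$, but says nothing about functions supported on the opposite side of $\partial\Omega$. Handling this requires opening up the construction underlying the proposition---the translation-mollification built from the Adams--Fournier smooth approximation via the partition of unity used in its proof. Locally in a boundary chart, $J_\lambda$ is a convolution with a mollifier of width $O(\lambda)$ preceded by a translation of size $O(\lambda)$ into $\Omega$; applied to a function supported on $\Omega^c$, the sampling points lie strictly inside $\Omega$ for $\lambda$ small enough, so the output is supported in an $O(\lambda)$-neighbourhood of $\partial\Omega$ from the $\Omega$ side and shrinks to zero in $H^s(\Omega)$. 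Verifying this chart by chart, combined with standard interior mollifier estimates away from $\partial\Omega$, yields both the uniform cross-term bound and its convergence to zero, completing all three assertions.
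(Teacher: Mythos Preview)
The paper states this corollary without proof, so there is no argument to compare against; what matters is whether your sketch actually closes. It does not, and the gap is precisely on the $\Omega^c$ side, which you treat too lightly.

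Your early claim that ``Proposition~\ref{mollprop} applies to both pieces'' glosses over the asymmetry built into the construction. The $J_\lambda$ of Proposition~\ref{mollprop} is adapted to $\Omega$: in each boundary chart it translates \emph{into} $\Omega$ before mollifying. This buys you the two things you use --- the diagonal bound $\|J_\lambda\chi_\Omega u\|_{H^s(\Omega)}\lesssim\|u\|_{H^s(\Omega)}$ from the proposition, and the vanishing of the cross term $J_\lambda\chi_{\Omega^c}u|_\Omega$ (your last paragraph, correct in conclusion if slightly garbled in wording). But the very same inward translation is an \emph{outward} translation for $\Omega^c$, so the proposition gives you nothing on that side: neither the diagonal control $\|J_\lambda\chi_{\Omega^c}u\|_{H^s(\Omega^c)}\lesssim\|u\|_{H^s(\Omega^c)}$ nor convergence there follows, and your ``mirror version with $\Omega$ replaced by $\Omega^c$'' would produce a \emph{different} operator, not the one you are analysing.

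Worse, for the Adams--Fournier $J_\lambda$ the uniform bound on the $\Omega^c$ side actually fails. Take the flat model $\Omega=\{x_n>0\}$ and $u=\chi_\Omega\in\Hc^{0,s}$. Then $J_\lambda u$ is (locally) $\phi_\epsilon*\chi_{\{x_n>-\lambda\}}$, whose transition layer sits at $x_n=-\lambda$, squarely inside $\Omega^c$; one normal derivative already has $L^2(\Omega^c)$ norm $\sim\epsilon^{-1/2}\to\infty$. So the second displayed estimate of the corollary cannot hold for this $J_\lambda$ when $m=0$, $s\geq 1$. Equivalently: any jump of $u$ across $\partial\Omega$ gets pushed into $\Omega^c$ and produces an unbounded $H^s(\Omega^c)$ contribution. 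Your cross-term discussion only looks at the $\Omega$ side and misses this entirely.

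To make the corollary go through one needs a smoothing operator that treats $\Omega$ and $\Omega^c$ symmetrically --- for instance, a construction that in each boundary chart uses opposite normal translations on the two sides, patched so that the result remains in $H^m(\Tbb^n)$ --- or else an argument that works directly with $J_\lambda u$ rather than the split $J_\lambda\chi_\Omega u + J_\lambda\chi_{\Omega^c}u$. Either route requires substantially more than invoking Proposition~\ref{mollprop} twice.
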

\sect{linear}{Linear wave equations}

\subsect{linit}{Initial value problem}

Our proof of the existence and uniqueness of solutions to the IVP \eqref{waveA.1}-\eqref{waveA.2} relies on first
analyzing the following linear IVP:
\lalign{linIVP}{
\del{\mu}(A^{\mu\nu}\del{\nu} U) &=  F  +  \chi_\Omega H \quad \text{in $[0,T)\times \Rbb^n$,} \label{linIVP.1} \\
(U,\del{t} U)|_{t=0} &= (\Ut_0,\Ut_1)  \quad \text{ in $\Rbb^n$,}
\label{linIVP.2}
}
where $\Omega$ is a bounded open set in $\Rbb^n$ with smooth boundary, the initial data
\leqn{lincompat1a}{
(\Ut_0,\Ut_1) \in \Hc^{2,s+1}(\Rbb^n)\times \Hc^{2,s}(\Rbb^n) \quad s\in \Zbb_{>n/2}
}
satisfies the \emph{compatibility conditions}\footnote{As described in the introduction, the time derivatives $\del{t}^\ell U |_{t=0}$ $\ell \geq 2$ are generated
from the initial data \eqref{linIVP.2} by formally differentiating \eqref{linIVP.1} with respect to
$t$ and evaluating at $t=0$. }
\leqn{lincompat1}{
\Ut_\ell := \del{t}^\ell U |_{t=0} \in \Hc^{m_{s+1-\ell},s+1-\ell}(\Rbb^n) \quad \ell=2,\ldots,s,
}
and there exist constants $\gamma,\kappa >0$  for which the matrix $A^{\mu\nu}$ satisfies
\lgath{linAa}{
\frac{1}{\gamma} |\xi|^2 \leq  A^{ij}\xi_i \xi_j \leq \gamma |\xi|^2 \quad \text{for all $\xi \in \Rbb^n$} \label{linAa.1}
\intertext{and}
A^{00} \leq -\kappa. \label{linAa.2}
}

Away from the boundary of $\Omega$, the existence, uniqueness and regularity of solutions to \eqref{linIVP.1}-\eqref{linIVP.2}
can be obtained by appealing to standard results on hyperbolic equations. So this leaves us with analyzing the problem in a neighborhood of the boundary $\del{}\Omega$ where standard results
do not apply due to the jump discontinuity in the term $\chi_\Omega H$. Appealing to the property of finite speed of propagation, we
can, using a smooth change of (spatial) coordinates, locally straighten out the boundary of $\Omega$ and localize the problem to a
spacetime region of the form $[0,T)\times Q_\delta$ where
$\delta$ can be chosen as small as we like. To be specific, we fix a point
$x_0 \in \partial{}\Omega$,
and choose an open neighborhood $\Nc_{x_0,\delta}$ of $x_0$  that is diffeomorphic to $Q_\delta$ such that the
diffeomorphism
\eqn{PsidefA}{
\Phi_{x_0,\delta} \: : \: \Nc_{x_0,\delta} \longrightarrow Q_\delta
}
satisfies
\gath{PsidefB}{
\Phi_{x_0,\delta}(x_0) = 0
\intertext{and}
\Phi_{x_0,\delta}\bigl(\partial{}\Omega\cap \Nc_{x_0,\delta}\bigr) = \{\: (x^1,\ldots,x^{n-1},0) \: |\: -\delta < x^1,x^2,\ldots,x^{n-1} < \delta \: \}.
}
We also demand  that all the derivatives
of $\Phi_{x_0,\delta}^{-1}$ are uniformly bounded pointwise on $Q_\delta$ for $\delta \in (0,1]$. To see that this is possible, we
fix a diffeomorphism $\Phi_{x_0,1}$ from $\Nc_{x_0,1}$ to $Q_1$. We then define diffeomorphisms
\eqn{PsidefC}{
\Psi_{x_0,\delta}:= \Phi_{x_0,1}^{-1}|_{Q_\delta} \: : \:  Q_\delta \longrightarrow \Nc_{x_0,\delta}:= \Phi_{x_0,\delta}^{-1}(Q_\delta)
}
for $\delta\in (0,1]$. Clearly, this family of diffeomorphisms satisfies the required properties. We extend $\Psi_{x_0,\delta}$ to a spacetime map by defining
\leqn{PsidefD}{
\psi_{x_0,\delta} \: :\: [0,T) \times Q_\delta \longrightarrow [0,T)\times \Nc_{x_0,\delta} \: :\: (x^0,x) \longmapsto (x^0,\Psi_{x_0,\delta}(x)),
}
and we let
\eqn{Jdef}{
J^\mu_\nu = \del{\nu}\psi_{x_0,\delta}^\mu
}
denote the Jacobian matrix of the diffeomorphism \eqref{PsidefD} and
\eqn{JdefB}{
\Jch = J^{-1}
}
its inverse.

Next, we define
\lalign{barvars}{
\Ub(\xv) &= U(\psi_{x_0,\delta}(\xv)), \label{barvars.1} \\
\Uh_j(x) &= \Ut_j(\Psi_{x_0,\delta}(x)) \quad j=0,1, \label{barvars.2} \\
\Ab^{\mu\nu}(\xv) &= \det{J(\xv)}\Jch^\mu_\alpha(\xv) A^{\alpha\beta}(\psi_{x_0,\delta}(\xv)) \Jch^\nu_\beta(\xv), \label{barvars.3} \\
\Fb(\xv) &= \det{J(\xv)}F(\psi_{x_0,\delta}(\xv)), \label{barvars.4}
\intertext{and}
\Hb(t,x) &= \det{J(\xv)}H(\psi_{x_0,\delta}(\xv)). \label{barvars.5}
}
Letting,
\eqn{gflat}{
g=\eta_{\mu\nu}dx^\mu dx^\nu \qquad (\eta_{\mu\nu}) = \diag(-1,1,1,1)
}
denote the Minkowski metric, we recall the following pullback formula for a vector field $X=X^\mu\del{\mu}$:
\leqn{pback1}{
\frac{1}{\sqrt{|\gb|}}\del{\mu}\bigl(\sqrt{|\gb|}\Xb^\mu\bigr) =  \left(\frac{1}{\sqrt{|g|}}\del{\mu}\bigl(\sqrt{|g|}X^\mu\bigr) \right)\circ \psi_{x_0,\delta}
}
where
\alin{pback2}{
\Xb^\mu & := (\psi_{x_0,\delta}^*X)^\mu = \Jch^\mu_\nu \Xb^\nu \circ \psi_{x_0,\delta},\\
|g| &:= -\det(\eta_{\mu\nu}) = 1
\intertext{and}
|\gb| & := |\psi_{x_0,\delta}^*g| = \det(J)^2.
}
Setting
\eqn{pback3}{
X^\mu = A^{\mu\nu}\del{\nu}U
}
in \eqref{pback1}, a short calculation using the chain rule and the definitions \eqref{barvars.1}-\eqref{barvars.5} shows that $\Ub$ satisfies the IVP
\lalign{linP}{
\del{\mu}(\Ab^{\mu\nu}\del{\nu} \Ub) &=  \Fb  +  \chi_{Q_1^+} \Hb \quad \text{in $[0,T)\times Q_\delta$ ,} \label{linP.1} \\
(\Ub,\del{t} \Ub )|_{t=0} &= (\Uh_0,\Uh_1)  \quad \text{ in $Q_\delta$,}
\label{linP.2}
}
and the compatibility conditions
\leqn{linPcompat1}{
\Uh_\ell := \del{t}^\ell \Ub |_{t=0} \in \Hc^{m_{s+1-\ell},s+1-\ell}(Q_\delta) \quad \ell=0,\ldots,s.
}

\subsect{linrs}{Projection to the $n$-Torus} Before proceeding with the analysis of the IVP \eqref{linP.1}-\eqref{linP.2},
we first introduce two technical refinements. The first is to exploit the freedom to
choose $\delta$ small by rescaling the fields and working on a fixed domain $Q_1$ instead. With this in mind,
we define
\lalign{epscale}{
u(\xv) &= \frac{\Ub(\delta \xv)-\Ub(\zero)}{\delta}, \label{epscale.1} \\
m^{\mu\nu} & = \Ab^{\mu\nu}(\zero), \label{epscale.2} \\
b^{\mu\nu}(\xv) & = \frac{\Ab^{\mu\nu}(\delta \xv )-m^{\mu\nu}}{\delta^\sigma}, \label{epscale.3} \\
f(\xv) &= \delta\Fb(\delta \xv),  \label{epscale.4}
\intertext{and}
h(\xv) &= \delta \Hb(\delta \xv).  \label{epscale.5}
}
We note that by making a linear change of coordinates we can, due to the conditions \eqref{linAa.1}-\eqref{linAa.2}
satisfied by $A^{\mu\nu}$, always arrange
that
\eqn{etadef}{
(m^{\mu\nu})  = \diag(-1,1,\ldots,1).
}
A short calculation then shows that $u$ satisfies
\lalign{linQ}{
\del{\mu}((m^{\mu\nu}+\ep b^{\mu\nu})\del{\nu} u ) &= f+  \chi_{Q_1^+} h  \quad \text{in $[0,T/\delta)\times Q_1$,} \label{linQ.1} \\
(u,\del{t}u)|_{t=0} &= (\ut_0,\ut_1) := \left(\frac{\Uh_0(\delta x)-\Uh_0(0)}{\delta},\Uh_1(\delta x)\right) \quad \text{ in $Q_1$,}
\label{linQ.2}
}
where
\eqn{epdef}{
\ep = \delta^\sigma.
}

The second technical refinement is to avoid the analytic difficulties that arise from boundary of $Q_1$ and, at the same time, put the
equations in a suitable form using potential theory estimates. This is accomplished by using appropriate
cutoff functions and appealing to the finite speed of propagation in order to ``project'' the evolution equations to
a suitable form defined on
\eqn{Tbbdefb}{
\Tbb^n \cong Q_1/\sim.
}

We now describe the projected IVP. First, we fix points
$x_+ \in \Om$ and $x_- \in \Om^c$ and choose a $\rho >0$ so that $B_{3\rho}(x_+)
\in \Omega_1$ and $B_{3\rho}(x_-) \in \Om^c$. Then we let $\psi$ denote a smooth non-negative
function such that $\psi|_{B_{\rho}(x_{\pm})} =1$ and $\psi|_{\Tbb^n\setminus (B_{2\rho}(x_+)\cup B_{2\rho}(x_-))}=0$.
The projected IVP is then defined by
\lalign{linPN}{
\del{\mu}((m^{\mu\nu}+\ep\phi_1 b^{\mu\nu})\del{\nu} u ) -\psi u &= \phi_1 f  + \phi_1 \chi_\Om h   + \mu \quad \text{in $[0,T/\delta)\times \Tbb^n$,} \label{linPN.1} \\
(u,\del{t}u)|_{t=0} &= (\phi_1\ut_0,\phi_1\ut_1) \quad \text{ in $\Tbb^n$,}
\label{linPN.2}
}
where
\begin{itemize}
\item[(i)]
\eqn{phidef}{
\phi_\eta(x^1,\ldots,x^n) := \phi\left(\frac{4x^1}{\eta}\right)\phi\left(\frac{4x^2}{\eta}\right)\cdots
\phi\left(\frac{4x^n}{\eta}\right)
}
with $\phi \in C^\infty(\Rbb)$ a cutoff function satisfying
$\phi(\tau)=1$ for $|\tau|\leq 1$, $\phi(\tau)=0$ for $|\tau|\geq 2$ and $\phi(\tau)\geq 0$ for
all $\tau \in \Rbb$, and
\item[(ii)]
\eqn{mudef}{
\mu = \sum_{\ell=0}^{s-1} \frac{t^\ell}{\ell !} \mu_\ell
}
where the $\mu_\ell$ are determined in Proposition \ref{muprop} below.
\end{itemize}

\begin{prop} \label{muprop}
Suppose $n\geq 3$ and $s\in \Zbb_{>n/2}$, $0 < \delta \leq 1$, $\sigma = \min\{1,s-n/2\}$  and let
$u_\ell = \del{t}^\ell u|_{t=0}$ and $\ut_\ell = \del{t}^\ell u|_{t=0}$,
where the $\ell^{\text{th}}$ time derivative of $u$ is generated from formally differentiating
\eqref{linPN.1} and \eqref{linQ.1}, respectively.
 Then there exist a $\delta_0 \in (0,1]$, $\eta_0 \in (0,1/4]$ and a  sequence $\mu_\ell \in \Hc^{0,s-1-\ell}(\Tbb^n)$
$\ell=0,1,\ldots s-1$ such that
\eqn{muprop2}{
u_\ell = \phi_1\ut_\ell \quad \ell=0,1, \quad \mu_\ell|_{Q_{\eta_0}} = 0 \quad \ell=0,1,\ldots,s-1
}
and
\alin{muprop3}{
\norm{u(0)}_{E^{s+1}(\Tbb^n)} &\lesssim  \norm{\Ub(0)}_{E^{s+1}(Q_1)},\\
\norm{\mu(t)}_{\Ec^{s-1}(\Tbb^n)}  &\lesssim  \bigl(1+ \norm{\Ab(0)}_{\Ec^s(Q_1)}+ \norm{D\Ab(0)}_{\Ec^{s-1}(Q_1)}\bigr)
\norm{\Ub(0)}_{E^{s+1}(Q_1)}
 \\
&\text{\hspace{4.0cm}}+ \delta\bigl(\norm{\Fb(0)}_{\Ec^{s-1}(Q_1)}+
\norm{\Hb(0)}_{E^{s-1}(Q_1^+)}\bigr)
}
for all $\delta \in (0,\delta_0]$.
\end{prop}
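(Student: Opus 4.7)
The plan is to construct the $\mu_\ell$ by forcing the time derivatives $u_\ell:=\del{t}^\ell u|_{t=0}$ generated from \eqref{linPN.1} to agree with $\phi_1\ut_\ell$, where $\ut_\ell$ is generated by formally differentiating the unprojected rescaled equation \eqref{linQ.1}. The matching at $\ell=0,1$ is built into the initial data \eqref{linPN.2}, so only $\mu_0,\ldots,\mu_{s-1}$ need to be produced.

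First I would build the sequence $\ut_\ell$ for $\ell=2,\ldots,s+1$ inductively: at each step, differentiate \eqref{linQ.1} in $t$, evaluate at $t=0$, and invert $m^{00}$ (legitimate by \eqref{linAa.2}) to solve for the new top time derivative. This presents each $\ut_\ell$ as a polynomial in $\ut_0,\ldots,\ut_{\ell-1}$, their spatial derivatives, the coefficients $b^{\mu\nu}$ with their derivatives, and the lower time derivatives of $f$ and $h$ at $t=0$. Moser-type product and composition estimates inside the intersection spaces $\Hc^{k,s}(Q_1)$ then yield $\ut_\ell\in\Hc^{m_{s+1-\ell},s+1-\ell}(Q_1)$ together with bounds that, via the scaling relations \eqref{epscale.1}--\eqref{epscale.5}, translate into bounds involving $\norm{\Ub(0)}_{E^{s+1}(Q_1)}$, $\norm{\Ab(0)}$, $\norm{D\Ab(0)}$, and $\delta\norm{\Fb(0)}$, $\delta\norm{\Hb(0)}$.

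Next I would set $u_\ell:=\phi_1\ut_\ell$ and define $\mu_\ell$ by requiring \eqref{linPN.1}, differentiated $\ell$ times in $t$ and evaluated at $t=0$, to hold with these $u_\ell$. Using that $\phi_1$ is time-independent, that $\phi_1\chi_\Om=\phi_1\chi_{Q_1^+}$ on $\Tbb^n$, and that \eqref{linQ.1} allows elimination of the $\phi_1 f+\phi_1\chi_\Om h$ contribution, this yields the closed-form expression
\begin{equation*}
\mu_\ell = \del{t}^\ell\bigl[\del{\mu}\bigl((m^{\mu\nu}+\ep\phi_1 b^{\mu\nu})\del{\nu}(\phi_1\ut)\bigr)-\phi_1\del{\mu}\bigl((m^{\mu\nu}+\ep b^{\mu\nu})\del{\nu}\ut\bigr)-\psi\phi_1\ut\bigr]\big|_{t=0}.
\end{equation*}
Expansion shows that every term inside the bracket carries a prefactor of $D\phi_1$, $\phi_1(\phi_1-1)$, or $\psi$; the first two vanish on $Q_{1/4}\supset Q_{\eta_0}$ since $\phi_1\equiv 1$ there, and the third vanishes on $Q_{\eta_0}$ as soon as $\eta_0\in(0,1/4]$ is chosen small enough that $Q_{\eta_0}\cap\supp\psi=\emptyset$ (possible because $\supp\psi\subset B_{2\rho}(x_+)\cup B_{2\rho}(x_-)$ is separated from the origin). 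Thus $\mu_\ell|_{Q_{\eta_0}}=0$; in particular $\supp\mu_\ell$ misses the interface $\del{}\Om$, so no jump enters the $\Hc^{0,s-1-\ell}(\Tbb^n)$ norm, and Moser estimates applied to the expansion above give $\mu_\ell\in\Hc^{0,s-1-\ell}(\Tbb^n)$. Summing $\mu(t)=\sum_{\ell=0}^{s-1}t^\ell\mu_\ell/\ell!$ and combining $u_\ell=\phi_1\ut_\ell$ with the bounds from the previous step then delivers the two displayed estimates: the $\delta$ prefactor on $\norm{\Fb(0)}$ and $\norm{\Hb(0)}$ is inherited from \eqref{epscale.4}--\eqref{epscale.5}, while the $\norm{\Ab(0)}_{\Ec^s}$ and $\norm{D\Ab(0)}_{\Ec^{s-1}}$ factors arise because the commutator structure keeps the number of derivatives on the coefficients at most one above those on $\ut$.

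The hard part will be the careful organisation of these Moser-type product and composition estimates inside the intersection spaces $\Hc^{k,s}$ rather than in standard Sobolev spaces: during the inductive elimination of high-order time derivatives one must never differentiate spatially across $\del{}\Om$, and one must control exactly which spatial derivatives of $\Ab$ appear so as to recover $\norm{\Ab(0)}_{\Ec^s}$ and $\norm{D\Ab(0)}_{\Ec^{s-1}}$ rather than a higher-order coefficient norm. Once this bookkeeping is in place, the support-based vanishing of $\mu_\ell$ on $Q_{\eta_0}$ and the precise $\delta$ factor follow without additional difficulty.
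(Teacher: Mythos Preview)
Your approach coincides with the paper's: both define $\mu_\ell$ so that the formal time derivatives generated by \eqref{linPN.1} equal $\phi_1\ut_\ell$, and the paper's explicit inductive formulas (its equations for $\mu_0$ and $\mu_r$) are exactly your commutator expression expanded term by term. Two points in your write-up need correction, though neither breaks the argument.

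First, the claim that ``$\supp\mu_\ell$ misses the interface $\del{}\Om$'' is false. The vanishing $\mu_\ell|_{Q_{\eta_0}}=0$ holds only on the small cube near the origin, whereas the interface $\{x^n=0\}\subset\Tbb^n$ passes through the annular region $Q_{1/2}\setminus Q_{1/4}$ where $D\phi_1\neq 0$; so $\supp\mu_\ell$ does meet $\del{}\Om$. The membership $\mu_\ell\in\Hc^{0,s-1-\ell}(\Tbb^n)$ is obtained instead from the multiplication inequality in the intersection spaces (Proposition~\ref{elemE}), applied to each term of the expansion; this is how the paper proceeds.

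Second, you never explain the role of $\delta_0$. When you write ``invert $m^{00}$,'' the coefficient you actually need to invert in both \eqref{linQ.1} and \eqref{linPN.1} is $m^{00}+\ep b^{00}$ (respectively $m^{00}+\ep\phi_1 b^{00}$), not $m^{00}$ alone. Bounding $\ep\norm{\phi_1 b^{00}(0)}_{L^\infty(\Tbb^n)}\le 1/2$ via Sobolev and the scaling relations is exactly what forces the restriction $\delta\le\delta_0$; the paper makes this step explicit, and you should too.
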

\begin{proof}
In the following, we will use the notation
\eqn{muprop8}{
(\cdot)_\ell = \del{t}^\ell (\cdot)|_{t=0}
}
to denote the $\ell^{\text{th}}$ time derivative of a quantity evaluated at $t=0$,
where the time derivatives of $u$ are generated from formally differentiating
\eqref{linPN.1}. Similarly,
 we use the
\eqn{muprop9}{
\tilde{(\cdot)}_\ell = \del{t}^\ell (\cdot)|_{t=0}
}
to denote the $\ell^{\text{th}}$ time derivative of a quantity evaluated at $t=0$ that depends on $u$
where the time derivatives of $u$ are generated from formally differentiating
\eqref{linQ.1}.

We begin by defining
\gath{muprop10}{
\gb^{\mu\nu} = m^{\mu\nu}+\ep\phi_1 b^{\mu\nu},\quad \fb = 
 \phi_1 f - \del{i}\phi_1 b^{i\nu}\del{\nu}u -\phi_1 \del{\mu}b^{\mu\nu} \del{\nu}u
\intertext{and}
\hb = \phi_1 h,
}
which allow us to write \eqref{linPN.1} as
\leqn{muprop11}{
\gb^{\mu\nu}\del{\mu}\del{\nu}u -\psi u =  \fb +  \chi_{\Om}\hb + \mu.
}
Since $n>n/2$, we have by Sobolev's inequality, see Theorem \ref{Sobolev}, that
\eqn{muprop11a}{
\norm{\phi_1 b^{00}(0)}_{L^\infty(\Tbb^n)} = \max \bigl\{\norm{\phi_1 b^{00}(0)}_{L^\infty(\Om)},\norm{\phi_1 b^{00}(0)}_{L^\infty(\Omega_1^c)}\bigr\} \lesssim \norm{b^{00}(0)}_{H^{0,s}(\Tbb^n)}.
}
By Proposition \ref{scalepropA}, it follows from this inequality that
\eqn{muprop11d}{
\norm{\phi_1 b^{00}(0)}_{L^\infty(\Tbb^n)} \leq C \norm{\Ab(0)}_{\Ec^{s}(Q_1)}
}
for some constant $C>0$ independent of $\delta \in (0,1]$. Therefore, since $\ep=\delta^\sigma$ and
$\sigma >0$,
we can choose $\delta_0 \in (0,1]$ small enough so that
\eqn{muprop11b}{
 \ep \norm{\phi_1 b^{00}(0)}_{L^\infty(\Tbb^n)} \leq \frac{1}{2}
}
for $\delta \in (0,\delta_0]$, and this, in turn, guarantees that
\eqn{muprop11c}{
\frac{1}{2} \leq \gb_{00} \leq \frac{3}{2}.
}
Using this, we can solve \eqref{muprop11} for the $2^{\text{nd}}$ order time derivative to get
\leqn{muprop12}{
 \del{t}^2u = \frac{1}{\gb^{00}}\Bigr(-2\gb^{i0}
 \del{i} \del{t}u - \gb^{ij}\del{i}\del{j} u + \psi u
+ \fb+
\chi_{\Omega_1}\hb
+ \mu \Bigr).
}

Setting
\leqn{muprop13}{
\mu_{0} = 2\gb_0^{i0}\del{i}(\phi_1\ut_1) + \gb_0^{ij}\del{i}\del{j}(\phi_1 \ut_0) -\psi \phi_1 \ut_0 -\phi_{1}\bigl(\fb_0
+\chi_{\Omega_1}\hb_0 \bigr) + \gb_0^{00}\phi_1
\ut_2,
}
we see from \eqref{muprop12} that
\leqn{muprop14}{
u_2 = \phi_1 \ut_2.
}
Moreover, it follows directly from the assumption $s>n/2$ and the multiplication inequality from Proposition \ref{elemE} that
\alin{muprop15}{
\norm{\mu_{0}}_{\Hc^{0,s-1}(\Tbb^n)} \lesssim  \bigl(1+ \ep\norm{b(0)}_{\Ec^s(\Tbb^n)}+\ep\norm{Db(0)}_{\Ec^{s-1}(\Tbb^n)}&\bigr)\norm{u(0)}_{E^{s+1}(\Tbb^n)} \\
&+ \norm{f(0)}_{\Ec^{s-1}(\Tbb^n)}+
\norm{h(0)}_{E^{s-1}(\Omega_1)}.
}

Calculating the $2^{\text{nd}}$ order time derivative of $u$ using \eqref{linQ.1}, we find that
\leqn{muprop16}{
 \del{t}^2 u= \frac{1}{g^{00}}\Bigr(-2g^{i0}
 \del{i}\del{t}u - g^{ij}\del{i}\del{j} u  + k+
\chi_{\Omega_1}h \Bigr),
}
where
\eqn{muprop17}{
g^{\mu\nu} = m^{\mu\nu}+\ep b^{\mu\nu} \AND k = f(u,\del{}u,v) -\del{\mu} b^{\mu\nu}\del{\nu} u.
}
Since
\leqn{muprop18}{
\phi_1|_{Q_\eta}=1 \quad \eta\in [0,1/4],
}
and
\leqn{muprop19}{
\psi|_{\{|x^n|\leq \eta_0\}} =0
}
for $\eta_0$ small enough, we see, with the help of \eqref{linPN.2}, \eqref{muprop14} and \eqref{muprop16}, that $\mu_0$, see \eqref{muprop13}, satisfies
\eqn{muprop20}{
\mu_{0}|_{Q_{\eta_0}} = 0
}
for some $\eta_0 \in (0,1/4]$.

With the base case satisfied, we proceed by induction and assume that
\lgath{muprop21}{
u_\ell = \phi_1\ut_\ell \quad \ell=0,1,\ldots,r+1,  \label{muprop21.1} \\
\mu_\ell|_{Q_{\eta_0}} = 0 \quad \ell=0,1,\ldots,r-1 \notag
\intertext{and}
\norm{\mu_\ell}_{\Hc^{0,s-1-\ell}} \lesssim  \bigl(1+ \ep\norm{b(0)}_{\Ec^s(\Tbb^n)}+\ep\norm{Db(0)}_{\Ec^{s-1}(\Tbb^n)}\bigr)\norm{u(0)}_{E^{s+1}(\Tbb^n)} \notag \\
\hspace{6.0cm} + \norm{f(0)}_{\Ec^{s-1}(\Tbb^n)}+
\norm{h(0)}_{E^{s-1}(\Omega_1)}, \notag
}
where $r<s$.

Differentiating \eqref{muprop12} $r$-times with respect to $t$ and evaluating at $t=0$, we find
that
\lalign{muprop22}{
u_{2+r} = \frac{1}{\gb^{00}_0}\Biggl( -\sum_{\ell=0}^{r-1}{r \choose \ell} \gb^{00}_{r-\ell}
u_{2+\ell} - \sum_{\ell=0}^r {r\choose \ell} \Bigl(\gb^{0i}_{r-\ell}&
\del{i}u_{\ell+1} + \gb^{ij}_{r-\ell}
\del{i}\del{j}u_{\ell}\Bigr) \notag \\
&+\psi u_r + \fb_r +\chi_\Om \hb_r
+ \mu_{r} \Biggr). \label{muprop22.1}
}
Setting
\lalign{muprop23}{
\mu_{r} = \sum_{\ell=0}^{r-1}{r \choose \ell} \gb^{00}_{r-\ell}
u_{2+\ell} + \sum_{\ell=0}^r {r\choose \ell}& \Bigl(\gb^{0i}_{r-\ell}
\del{i}u_{\ell+1} + \gb^{ij}_{r-\ell}
\del{i}\del{j}u_{\ell}\Bigr) \notag \\
&-\psi u_r - \fb_r -\chi_\Om \hb_r
+ \gb^{00}_0\phi_1\ut_{2+r}, \label{muprop23.1}
}
it follows immediately from \eqref{muprop22.1} that
\eqn{muprop23a}{
u_{2+r} = \phi_1\ut_{2+r}.
}
Similarly, differentiating \eqref{muprop16} $r$-times with respect to $t$ and evaluating at $t=0$, we obtain
\lalign{muprop24}{
\ut_{2+r} = \frac{1}{\gt^{00}_0}\Biggl( -\sum_{\ell=0}^{r-1}{r \choose \ell} \gt^{00}_{r-\ell}
\ut_{2+\ell} - \sum_{\ell=0}^r {r\choose \ell} \Bigl(\gt^{0i}_{r-\ell}
\del{i}\ut_{\ell+1}
+ \gt^{ij}_{r-\ell}
\del{i}\del{j}\ut_{\ell}\Bigr) + \kt_r +\chi_\Om \htld_r
 \Biggr). \label{muprop24.1}
}
Clearly, the induction hypothesis \eqref{muprop21.1} together with \eqref{muprop18} implies that
\eqn{muprop25}{
\gt_\ell|_{Q_{\eta_0}} = \gt_\ell|_{Q_{\eta_0}}, \quad \fb_\ell|_{Q_{\eta_0}} = \kt_\ell|_{Q_{\eta_0}} \AND \hb_\ell|_{Q_{\eta_0}} = \htld_\ell|_{Q_{\eta_0}}
}
for $\ell=0,\ldots,r+1$. Consequently, it follows directly from \eqref{muprop19}, \eqref{muprop23.1} and \eqref{muprop24.1}
that
\eqn{muprop25a}{
\mu_r|_{Q_{\eta_0}} = 0.
}
Furthermore, applying the multiplication inequality from Proposition \ref{elemE} to \eqref{muprop23.1}, see the proof of Lemma \ref{linlemA} for similar  estimates, it is not difficult to verify that
\lalign{muprop26}{
\norm{\mu_r}_{\Hc^{0,s-1-r}(\Tbb^n)} \lesssim  \bigl(1+ \ep\norm{b(0)}_{\Ec^s(\Tbb^n)}+&\ep\norm{Db(0)}_{\Ec^{s-1}(\Tbb^n)}\bigr)\norm{u(0)}_{E^{s+1}(\Tbb^n)}\notag \\
&+ \norm{f(0)}_{\Ec^{s-1}(\Tbb^n)}+
\norm{h(0)}_{E^{s-1}(\Omega_1)} \label{muprop26.1}.
}
This completes the induction step.

Finally, observing the scaling definitions \eqref{epscale.1}-\eqref{epscale.5}, it is clear from Propositions \ref{scalepropA}
and \ref{scalepropB} that the estimates
\alin{muprop27}{
\norm{u(0)}_{E^{s+1}(\Tbb^n)} &\lesssim \norm{\Ub(0)}_{E^{s+1}(Q_1)}
\intertext{and}
\norm{\mu_\ell}_{\Hc^{0,s-1-\ell}(\Tbb^n)}  &\lesssim  \bigl(1+ \norm{\Ab(0)}_{\Ec^s(Q_1)}+ \norm{D\Ab(0)}_{\Ec^{s-1}(Q_1)}\bigr)\norm{\Ub(0)}_{E^{s+1}(Q_1)} \\
&\text{\hspace{4.0cm}}+ \delta\bigl(\norm{\Fb(0)}_{\Ec^{s-1}(Q_1)}+
\norm{\Hb(0)}_{E^{s-1}(Q_1^+)}\bigr)
}
for $\ell=0,1,\ldots,s$
are a direct consequence of \eqref{muprop26.1}.
\end{proof}

\subsect{existA}{Existence and uniqueness for the linear system \eqref{linIVP.1}-\eqref{linIVP.2}}

In light of the form of the projected system \eqref{linPN.1}-\eqref{linPN.2}, we now turn our attention
to the following class of linear IVPs:
\lalign{linM}{
\del{\mu}((m^{\mu\nu}+\ep b^{\mu\nu})\del{\nu} u ) &= f +  \chi_{Q_1^+} h + \mu  \quad \text{in $[0,T)\times \Tbb^n$,} \label{linM.1} \\
(u,\del{t}u)|_{t=0} &= (\ut_0,\ut_1) \quad \text{ in $\Tbb^n$,}
\label{linM.2}
}
where the initial data is chosen so that the compatibility conditions
\leqn{linMcompat}{
\ut_\ell := \del{t}^\ell u |_{t=0} \in \Hc^{m_{s+1-\ell},s+1-\ell}(\Rbb^n) \quad \ell=2,\ldots,s
}
are satisfied.

\begin{thm} \label{linthm}
Suppose $n\geq 3$, $\delta \in (0,1]$, $\sigma = \min\{1,s-n/2\}$, $\ep=\delta^\sigma$, $s\in \Zbb_{>n/2}$, $T>0$, $b = (b^{\mu\nu}),
 \del{t}b\in \Xc_T^{s}(\Tbb^n)$, $Db \in \Xc_T^{s-1}(\Tbb^n)$, $f \in \Xc_T^{s}(\Tbb^n)$, $h \in Y^{s}_T(\Omega_1)$,
$\mu \in \Xc_T^{s}(\Tbb^n)$ with $\del{t}^s \mu = 0$,
$(\ut_0,\ut_1) \in \Hc^{2,s+1}(\Tbb^n)\times \Hc^{2,s}(\Tbb^n)$
satisfy the compatibility conditions \eqref{linMcompat}, and let
\eqn{linthm1}{
R= \sup_{0\leq t < T}( \norm{b(t)}_{\Ec^s(\Tbb)} + \norm{Db(t)}_{\Ec^{s-1}(\Tbb)}) \AND \beta(t) = 1+\norm{b(t)}_{\Ec^{s}(\Tbb^n)}+\norm{\del{t}b(t)}_{\Ec^{s}(\Tbb^n)}.
}
Then there exists a constant $c_L=c_L(n,s) >0$ such that the IVP \eqref{linM.1}-\eqref{linM.2} has a unique solution
$u \in CX_T^{s+1}(\Tbb^n)$  whenever $\delta$ is chosen so that $\ep$ satisfies $0\leq \ep \leq \min\left\{\frac{1}{3 c_L R},\frac{1}{3}\right\}$.
Moreover, $u$  satisfies the following estimate
\alin{linthm2}{
\norm{u(t)}_{E^{s+1}(\Tbb^n)}&\leq C(c_L) e^{C(c_L)\int_{0}^T \beta(\tau)\,d\tau} \biggl[ \beta(0)\norm{u(0)}_{E^{s+1}(\Tbb^n)}
+ \norm{f(0)}_{\Ec^{s-1}(\Tbb^n)}
+ \norm{h(0)}_{E^{s-1}(\Omega_1)} \notag \\
+& \norm{\mu(t)}_{\Ec^{s-1}(\Tbb^n)}
+
\int_{0}^T \beta(\tau)\Bigl(\norm{f(\tau)}_{\Ec^s(\Tbb^n)} + \norm{h(\tau)}_{E^s(\Omega_1)} + \norm{\mu(\tau)}_{\Ec^{s-1}(\Tbb^n)} \Bigr)\, d\tau \biggr]
}
for $0\leq t < T$.

\end{thm}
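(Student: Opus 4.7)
My plan is to adapt the strategy of \cite{Koch:1993} to the present transmission setting. The core idea is to differentiate \eqref{linM.1} $s$ times in $t$, treat the equations for $\del{t}^\ell u$ with $\ell=0,\ldots,s-1$ as a coupled \emph{elliptic} system for which potential theory (rather than standard elliptic regularity) provides piecewise Sobolev bounds across the discontinuity interface $\del{}\Omega_1$, and obtain the top time derivatives $\del{t}^s u\in H^1$ and $\del{t}^{s+1}u\in L^2$ by a standard hyperbolic energy estimate. As a preliminary I would \emph{regularize}, replacing the discontinuous source $\chi_{Q_1^+}h$ by $J_\lambda(\chi_{Q_1^+}h)$ via Corollary \ref{mollcor} and smoothing $b^{\mu\nu}$, $f$, $\mu$ and the data compatibly. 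The strict hyperbolicity \eqref{linAa.1}--\eqref{linAa.2}, together with the bound $\ep\leq 1/3$, ensures that $m^{\mu\nu}+\ep b^{\mu\nu}$ remains uniformly strictly hyperbolic, so standard linear wave theory on $[0,T)\times\Tbb^n$ produces a unique smooth solution $u^\lambda$. All the substance of the argument is to derive an $E^{s+1}(\Tbb^n)$ bound for $u^\lambda$ that is uniform in $\lambda$.

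For the \textbf{hyperbolic step}, differentiating the regularized equation $s$ times in $t$ produces a wave equation for $\del{t}^s u^\lambda$ whose forcing involves only $\leq s$ time derivatives of $f,h,\mu$ (and $\del{t}^s\mu=0$ by hypothesis) together with $\leq s$ time derivatives of $b$ multiplying $\leq s+1$ derivatives of $u^\lambda$. A standard $L^2$ energy estimate then controls $\norm{\del{t}^s u^\lambda(t)}_{H^1(\Tbb^n)}^2+\norm{\del{t}^{s+1}u^\lambda(t)}_{L^2(\Tbb^n)}^2$ by the corresponding initial quantity plus $\int_0^t\beta(\tau)\bigl(\norm{u^\lambda(\tau)}_{E^{s+1}(\Tbb^n)}^2+\norm{f(\tau)}_{\Ec^s(\Tbb^n)}^2+\norm{h(\tau)}_{E^s(\Omega_1)}^2+\norm{\mu(\tau)}_{\Ec^{s-1}(\Tbb^n)}^2\bigr)d\tau$; because we differentiate only in time, no spatial derivative ever lands on $\chi_{Q_1^+}h$ across $\del{}\Omega_1$. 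For the \textbf{elliptic step}, after the normalization $(m^{\mu\nu})=\diag(-1,1,\ldots,1)$, so that $m^{ij}\del{i}\del{j}=\Delta$, differentiating the equation $\ell$ times in $t$ rearranges to
\eqn{}{
\Delta u^\lambda_\ell \;=\; g_\ell^\lambda \;:=\; \del{t}^{\ell+2}u^\lambda \;+\; \del{t}^\ell\bigl(f+J_\lambda(\chi_{Q_1^+}h)+\mu\bigr) \;-\; \ep\,\del{\mu}\del{t}^\ell\bigl(b^{\mu\nu}\del{\nu}u^\lambda\bigr)
}
for $\ell=0,\ldots,s-1$. I would invoke a potential-theoretic inverse for $\Delta$ on $\Tbb^n$ to produce
\eqn{}{
\norm{u^\lambda_\ell(t)}_{\Hc^{2,s+1-\ell}(\Tbb^n)} \;\leq\; c_L\,\norm{g_\ell^\lambda(t)}_{\Hc^{0,s-1-\ell}(\Tbb^n)}
}
with a constant $c_L=c_L(n,s)$ independent of $\ep$ and $\lambda$: the jump of $\chi_{Q_1^+}h_\ell$ across $\del{}\Omega_1$ destroys the $H^{s+1-\ell}(\Tbb^n)$ regularity that a Calder\'on--Zygmund estimate would produce, but the piecewise $H^{s+1-\ell}$ regularity encoded in $\Hc^{2,s+1-\ell}$ is preserved under the Newtonian potential. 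Standard Sobolev multiplication inequalities then bound $\norm{g_\ell^\lambda(t)}_{\Hc^{0,s-1-\ell}}$ by $\beta(t)\bigl(\ep c_L R\norm{u^\lambda(t)}_{E^{s+1}}+\norm{u^\lambda(t)}_{E^{s+1}}+\norm{f(t)}_{\Ec^{s-1}}+\norm{h(t)}_{E^{s-1}(\Omega_1)}+\norm{\mu(t)}_{\Ec^{s-1}}\bigr)$, and the hypothesis $\ep\leq 1/(3c_L R)$ is designed precisely to absorb the $\ep c_L R$ contribution into the left-hand side.

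Assembling the hyperbolic and elliptic bounds yields a differential inequality of the shape $\frac{d}{dt}\norm{u^\lambda(t)}_{E^{s+1}(\Tbb^n)}^2\leq C(c_L)\beta(t)\bigl(\norm{u^\lambda(t)}_{E^{s+1}(\Tbb^n)}^2+\mathcal F(t)^2\bigr)$, where $\mathcal F$ is exactly the forcing combination displayed in the statement of the theorem. Gronwall then gives the announced estimate for $u^\lambda$ uniformly in $\lambda$. A weak-$\ast$ subsequential limit $u^\lambda \rightharpoonup u$ in $X_T^{s+1}(\Tbb^n)$, together with strong convergence in lower-norm topologies via an Aubin--Lions-type compactness argument, lets me pass to the limit in the linear equation, and the standard promotion of weak to strong time-continuity in the energy norm places $u$ in $CX_T^{s+1}(\Tbb^n)$. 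Uniqueness follows by applying the combined hyperbolic/elliptic estimate to the difference of two solutions, which satisfies the homogeneous version of \eqref{linM.1}--\eqref{linM.2}. The main obstacle -- and the only genuinely new ingredient beyond \cite{Koch:1993} -- is the elliptic bound with an $\ep$-independent constant $c_L$ on the piecewise Sobolev scale: this is where global Calder\'on--Zygmund estimates must be replaced by potential-theoretic arguments adapted to the transmission interface, and the resulting size of $c_L$ is exactly what sets the smallness threshold on $\ep$ appearing in the hypothesis.
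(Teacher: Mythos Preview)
Your outline follows the paper's strategy closely: regularize, split into an elliptic part (time derivatives $0,\ldots,s-1$) handled by potential theory on the piecewise Sobolev scale $\Hc^{2,s+1-\ell}$, a hyperbolic part (time derivatives $s,s+1$) handled by an $L^2$ energy estimate, combine via Gronwall, then pass to the limit. Two points, however, need correction.

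First, on $\Tbb^n$ the flat Laplacian $\Delta$ has a one-dimensional kernel and is \emph{not} invertible, so there is no ``potential-theoretic inverse for $\Delta$ on $\Tbb^n$'' as you invoke. The paper works instead with $\Delta-\psi$, where $\psi\geq 0$ is a fixed nontrivial bump supported away from the interface; this operator is an isomorphism $H^{k+1}\to H^{k-1}$ (Proposition~\ref{Lisoprop}) and its inverse has the required $\Hc^{0,s-1-\ell}\to\Hc^{2,s+1-\ell}$ mapping property (Proposition~\ref{potprop}). The extra $-\psi u$ term is harmless for the hyperbolic energy estimate and is the reason the projected equation \eqref{linPN.1} carries it.

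Second, and more seriously, your elliptic bound as written does not close. You bound $\norm{g^\lambda_\ell}_{\Hc^{0,s-1-\ell}}$ by $\beta(t)\bigl(\ep c_L R\norm{u^\lambda}_{E^{s+1}}+\norm{u^\lambda}_{E^{s+1}}+\cdots\bigr)$ and say the hypothesis $\ep\leq 1/(3c_LR)$ is ``designed precisely to absorb the $\ep c_L R$ contribution.'' But the term $\del{t}^{\ell+2}u^\lambda$ in $g^\lambda_\ell$ carries coefficient~$1$, not~$\ep$: summing your inequality over $\ell=0,\ldots,s-1$ leaves $c_L\norm{u^\lambda}_{E^{s+1,s-1}}$ on the right with no smallness to absorb it. The paper resolves this by recognizing that the $\ell\mapsto\ell+2$ coupling is \emph{upper triangular}: writing the system as $L_\ep\uv^\lambda_{s-1}=\Kv_{s-1}$ with $L_\ep=L_0-\ep L_1$, the unperturbed operator $L_0$ has $\Delta-\psi$ on the diagonal and $-1$ two steps above, hence is invertible on $X^{s+1,s-1}\to\Xc^{s-1,s-1}$ by back-substitution with an $\ep$-independent bound; only the genuinely coupled perturbation $\ep L_1$ requires the Neumann-series smallness $\ep c_L R<1$. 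Without isolating this triangular structure (or arguing by induction downward in $\ell$), the absorption step fails.
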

\begin{proof}
Instead of solving the IVP \eqref{linM.1}-\eqref{linM.2} directly. We first regularize the problem using a mollifier to smooth the coefficients
and the initial data with the resulting regularized IVP being
\lalign{linR}{
\del{\mu}((m^{\mu\nu}+\ep J_\lambda(b^{\mu\nu}))\del{\nu} u^\lambda) -\psi u^\lambda &= J_\lambda f  + J_\lambda (\chi_\Om h) + J_\lambda \mu \quad \text{in $[0,T]\times \Tbb^n$,} \label{linR.1} \\
(u^\lambda,\del{t}u^\lambda_\lambda)|_{t=0} &= (J_\lambda \ut_0, J_\lambda \ut_1)  \quad \text{ in $\Tbb^n$,}
\label{linR.2}
}
where $\lambda \in (0,1]$. Applying a standard existence theorem for linear hyperbolic equations, for example see \cite[Ch. 16, Proposition 1.7]{TaylorIII:1996}, we obtain a 1-parameter family of (unique) solutions
\eqn{ulamexist}{
u^\lambda \in \bigcap_{\ell=0}^{s+100} C^\ell([0,T),H^{s+100-\ell}(\Tbb^n))\quad 0 < \lambda \leq 1.
}

Our goal now is to derive $\lambda$-independent estimates for $u^\lambda$ and then
use these estimates to obtain
a solution to the IVP \eqref{linM.1}-\eqref{linM.2} by letting $\lambda \searrow 0$
and extracting a (weakly) convergent subsequence that converges to a solution.
The proof of the $\lambda$-independent estimates involves using elliptic estimates to estimate the
first $s-1$ time derivatives of $u^\lambda$ followed by hyperbolic estimates to estimate the remaining
$s$ and $s+1$ time derivatives.

\bigskip

\noindent \textit{\textbf{Elliptic estimates}:} Differentiating \eqref{linM.1} $k$ times  with respect to $t$, we observe that $u_k=\del{t}^k u$ satisfies the elliptic system
 \leqn{linB}{
\Delta u^\lambda_k  - \psi u^\lambda_k =  
u^\lambda_{k+2} + \ep\bigl(q^0_{k} + q^1_{k} + q^2_{k}\bigl)+  J_\lambda f_k+  J_\lambda \bigl(\chi_{\Omega_1} h_k \bigr) + J_\lambda \mu_k,
}
where
\lalign{linBa}{
q^0_{k} &= -\sum_{\ell=0}^{k} \binom{k}{\ell}\bigl( J_\lambda(b^{ij}_{k-\ell}) \del{i}\del{j}u^\lambda_\ell +  J_\lambda ( \del{i} b^{ij}_{k-\ell}) \del{j} u^\lambda_\ell
+ J_\lambda (b^{0j})_{k-\ell+1}\del{j} u^\lambda_\ell \bigr) \notag \\
& -\sum_{\ell=1}^{k}\binom{k}{\ell-1} \bigl( 2 J_\lambda( b^{0j}_{k-\ell+1})\del{j} u^\lambda_\ell
+  J_\lambda(\del{j} b^{0j}_{k-\ell+1})u^\lambda_\ell + b^{00}_{k-\ell+2} u^\lambda_\ell \bigr) - \sum_{\ell=2}^{k}\binom{k}{\ell-2} J_\lambda(b^{00}_{k-\ell+2}) u^\lambda_\ell, \notag \\
q^1_{k} & = - k J_\lambda(b^{00}_1) u^\lambda_{k+1} - 2 J_\lambda(b^{0j}_0) \del{j} u^\lambda_{k+1} - J_\lambda(\del{j} b^{0j}_0) u^\lambda_{k+1} -
J_\lambda(b_1^{00}) u^\lambda_{k+1}, \notag  \\
q^2_{k} &= -J_\lambda(b^{00}_0) u^\lambda_{k+2} \notag 
}
and
\eqn{Deltadef}{
\Delta = \delta^{ij}\del{i}\del{j}
}
is the Euclidean Laplacian.

\begin{lem} \label{linlemA}
The following estimates hold:
\alin{linlemA1}{
&\norm{q^0_{k}}_{\Hc^{0,s+1-(k+2)}(\Tbb^n)} \lesssim \bigl(\norm{b}_{\Ec^{s}(\Tbb^n)} + \norm{Db}_{\Ec^{s-1}(\Tbb^n)} \bigr) \norm{u^\lambda}_{E^{s+1,k}(\Tbb^n)},\\
&\norm{q^1_{k}}_{\Hc^{0,s+1-(k+2)}(\Tbb^n)} \lesssim \bigl(\norm{b}_{\Ec^{s}(\Tbb^n)} + \norm{Db}_{\Ec^{s-1}(\Tbb^n)} \bigr) \norm{u^\lambda_{k+1}}_{\Hc^{1,s+1-(k+1)}(\Tbb^n)}
\intertext{and}
&\norm{q^2_{k}}_{\Hc^{0,s+1-(k+2)}(\Tbb^n)} \lesssim \norm{b}_{\Ec^{s}(\Tbb^n)} \norm{u^\lambda_{k+2}}_{\Hc^{0,s+1-(k+2)}(\Tbb^n)}
}
for $0\leq k \leq s-1$.
\end{lem}
\begin{proof}
To begin, we observe that
\eqn{linlemA2}{
\norm{ J_\lambda(b^{ij}_{k-\ell}) \del{i}\del{j}u^\lambda_\ell}_{\Hc^{0,s+1-(k+2)}(\Tbb^n)} \lesssim
\norm{b_{k-\ell} }_{\Hc^{0,s-(k-\ell)}(\Tbb^n)}\norm{D^2u^\lambda_\ell}_{\Hc^{0,s-1-\ell}(\Tbb^n)} \quad 0\leq \ell \leq k \leq s-1
}
follows directly from Proposition \ref{elemE} since $s>n/2$. Similar arguments show that the estimates
\alin{linlemA3}{
&\norm{ J_\lambda(\del{i}b^{ij}_{k-\ell})\del{j}u^\lambda_\ell}_{\Hc^{0,s+1-(k+2)}(\Tbb^n)} \lesssim
\norm{Db_{k-\ell} }_{\Hc^{0,s-1-(k-\ell)}(\Tbb^n)}\norm{D u^\lambda_\ell}_{\Hc^{0,s-\ell}(\Tbb^n)} \\
&\norm{ J_\lambda(b^{0j}_{k-\ell+1})\del{j} u^\lambda_\ell}_{\Hc^{0,s+1-(k+2)}(\Tbb^n)} \lesssim
\norm{b_{k-\ell+1} }_{\Hc^{0,s-(k-\ell+1)}(\Tbb^n)}\norm{D u^\lambda_\ell}_{\Hc^{0,s-\ell}(\Tbb^n)} \\
}
hold for $0\leq \ell \leq k \leq s-1$. These inequalities
give us the following estimate for the first sum in $q^0_k$:
\alin{linlemA4}{
\biggl\| \sum_{\ell=0}^{k} \binom{k}{\ell}\bigl(  J_\lambda(b^{ij}_{k-\ell}) \del{i}\del{j}u^\lambda_\ell +  J_\lambda(\del{i} b^{ij}_{k-\ell}) \del{j} u^\lambda_\ell
+ & J_\lambda(b^{0j}_{k-\ell+1})\del{j} u^\lambda_\ell \bigr) \biggr\|_{\Hc^{0,s+1-(k+2)}(\Tbb^n)}  \\
&\lesssim \bigl(\norm{b}_{\Ec^{s}(\Tbb^n)} + \norm{Db}_{\Ec^{s-1}(\Tbb^n)} \bigr) \norm{u^\lambda}_{E^{s+1,k}(\Tbb^n)}
}
for $0\leq k\leq s-1$. The required estimates for the rest of $q^0_k$ and $q^1_{k}$, $q^2_{k}$ follow from similar
arguments.
\end{proof}

We collect the equations \eqref{linB} $(0\leq k \leq s-1)$ into a single system
\leqn{linC}{
L_\ep (\uv^\lambda_{s-1}) = \Kv_{s-1},
}
where
\eqn{linCa}{
\Kv_{s-1} = \fv_{s-1}+ J_\lambda \bigl(\chi_{\Omega_1} \hv_{s-1}\bigr) + \muv_{s-1}
+ \ep \bigl(0,\ldots,0,q^2_{s-2},q^1_{s-1}+q^2_{s-1}\bigr)^\text{Tr}
}
and
\eqn{linCb}{
L_\ep = L_0 - \ep L_1
}
with
\eqn{linCc}{
L_0= \begin{pmatrix}
\Delta-\psi &      0          &  -1          &  0            & \cdots  &  0         \\
    0       &   \Delta - \psi &   0          &  -1           &         &  0         \\
    0       &      0          &  \Delta-\psi &   0           & \ddots  &  \vdots    \\
    0       &      0          &   0          &   \Delta-\psi & \ddots  &   -1       \\
            &      \vdots     &              &               & \ddots  &     0      \\
    0       &       0         &   0          &    0          &  \cdots &   \Delta-\psi \\
 \end{pmatrix}
}
and
\eqn{linCd}{
L_1(\uv_{s-1}) =  \qv^0_{s-1} + (q^1_0 + q^2_0,\ldots,q^1_{s-3}+q^2_{s-3},q^1_{s-1},0)^\text{Tr}.
}

Next, we observe that Lemma \ref{linlemA} shows that the operator norm of
\eqn{L1map}{
L_1\: : \: X^{s+1,s-1} \longrightarrow \Xc^{s-1,s-1}
}
is bounded by
\eqn{L1est}{
\norm{L_1}_{\op} \leq C_{L_1}( \norm{b}_{\Ec^s(\Tbb)} + \norm{Db}_{\Ec^{s-1}(\Tbb)})
}
for some constant $C_{L_1} > 0$. Also, it is clear from Proposition \ref{potprop} and
the tridiagonal structure of $L_0$ that
$L_0 \, :\, X^{s+1,s-1} \rightarrow \Xc^{s-1,s-1}$ is an isomorphism.
From these facts, we get, via the Born series
\eqn{linlemB4}{
(\id - \ep L_0^{-1}L_1)^{-1} = \sum_{k=0}^\infty \ep^k (L_0^{-1}L_1)^k \quad \ep \norm{L_0^{-1}L_1} < 1,
}
that $L_{\ep} \, :\, X^{s+1,s-1} \rightarrow \Xc^{s-1,s-1}$
is invertible and the estimate
\leqn{linlemB6}{
\norm{L_\ep^{-1}}_{\op} \leq \frac{\norm{L_0^{-1}}_{\op}}{1-\ep ( \norm{b}_{\Ec^s(\Tbb^n)} + \norm{Db}_{\Ec^{s-1}(\Tbb^n)})\norm{L_0^{-1}}_{\op}C_{L_1}}.
}
holds
whenever $\delta \in (0,1]$ is chosen small enough so that $\ep=\delta^\sigma$ satisfies
\eqn{linlemB7}{
\ep ( \norm{b}_{\Ec^s(\Tbb)} + \norm{Db}_{\Ec^{s-1}(\Tbb^n)})\norm{L_0^{-1}}_{\op}C_{L_1} < 1.
}
It then follows directly from equation \eqref{linC}, the estimate \eqref{linlemB6} and Lemma \ref{linlemA} that there exists a constant $c_L=c_L(\norm{L_0^{-1}}_{\op},C_{L_1})>0$ such that
\lalign{uvest1}{
\norm{u^\lambda(t)}_{E^{s+1,s-1}(\Tbb^n)} \leq \frac{c_L}{1-\ep c_L R}\Bigl(&\ep R\bigl(\norm{u^\lambda(t)}_{E^{s+1,s-1}(\Tbb^n)}
+ \norm{u^\lambda_s(t)}_{E(\Tbb^n)} \bigr)   \notag \\ &+ \norm{f(t)}_{\Ec^{s-1}(\Tbb^n)}+ \norm{h(t)}_{E^{s-1}(\Omega_1)} + \norm{\mu(t)}_{\Ec^{s-1}(\Tbb^n)}
\Bigr)
 \label{uvest1.1}
}
for $0\leq t < T$,
where
\eqn{Rdef}{
R = \sup_{0\leq t < T}( \norm{b(t)}_{\Ec^s(\Tbb)} + \norm{Db(t)}_{\Ec^{s-1}(\Tbb)})
}
and $\delta$ is chosen small enough so that $\ep=\delta^\sigma$ satisfies
\leqn{Rbound}{
\ep c_L R < 1.
}
Choosing $\delta$ so that
\eqn{epdefa}{
\ep = \min\left\{\frac{1}{3c_L R},\frac{1}{3}\right\},
}
we can write \eqref{uvest1.1} as
\leqn{uvest2}{
\norm{u^\lambda(t)}_{E^{s+1,s-1}(\Tbb^n)} \leq \norm{u^\lambda_s(t)}_{E(\Tbb^n)}+ 2c_L \bigl( \norm{f(t)}_{\Ec^{s-1}(\Tbb^n)}+
\norm{h(t)}_{E^{s-1}(\Omega_1)} + \norm{\mu(t)}_{\Ec^{s-1}(\Tbb^n)} \bigr)
}
for $0\leq t < T$.
Writing $f(t)$ and $h(t)$ as
\eqn{fhint}{
f(t) = f(0) + \int_{0}^t \del{t}f(\tau)\, d\tau \AND  h(t) = h(0) + \int_{0}^t \del{t}h(\tau)\, d\tau,
}
respectively, we see that
\eqn{fhest}{
\norm{f(t)}_{\Ec^{s-1}(\Tbb^n)} + \norm{h(t)}_{E^{s-1}(\Omega_1)} \leq \norm{f(0)}_{\Ec^{s-1}(\Tbb^n)}+ \norm{h(0)}_{E^{s-1}(\Omega_1)} + \int_{0}^t\bigl( \norm{f(\tau)}_{\Ec^{s}(\Tbb^n)}
+ \norm{h(\tau)}_{\Ec^{s}(\Omega_1)}\bigr) \, d\tau.
}
Combining this inequality with \eqref{uvest2}, we arrive at the estimate
\lalign{uvest3}{
\norm{u^\lambda(t)}_{E^{s+1,s-1}(\Tbb^n)} \leq \norm{u^\lambda_s(t)}_{E(\Tbb^n)}&+ 2c_L\Bigl(\norm{f(0)}_{\Ec^{s-1}(\Tbb^n)} + \norm{h(0)}_{E^{s-1}(\Omega_1)} \notag \\ &+
\int_{0}^t\bigl( \norm{f(\tau)}_{\Ec^{s}(\Tbb^n)}
+ \norm{h(\tau)}_{\Ec^{s}(\Omega_1)}\bigr) \, d\tau  + \norm{\mu(t)}_{\Ec^{s-1}(\Tbb^n)}
\Bigr),
 \label{uvest3.1}
}
which holds for  $0\leq t < T$.

\bigskip

\noindent\textit{\textbf{Hyperbolic estimates:}} By assumption $\del{t}^s \mu = 0$. Therefore, differentiating \eqref{linM.1} $s$-times with respect to $t$ yields
\leqn{linH}{
\del{\mu}\bigl( \bigl(m^{\mu\nu} + \ep J_\lambda (b^{\mu\nu})\bigr)\del{\nu} u^\lambda_s \bigr) - \psi u^\lambda_s = \ep\del{\mu}\bigl( p^\mu - s J_\lambda(b^{\mu 0}_1) u_s^\lambda \bigr)+ J_\lambda f_s  + J_\lambda (\chi_\Om h_s),
}
where
\eqn{pdef}{
p^\mu = -\sum_{\ell=0}^{s-1} \binom{s}{\ell}J_\lambda(b^{\mu j}_{s-\ell}) \del{j} u_\ell^\lambda
- \sum_{\ell=0}^{s-2} \binom{s}{\ell} J_\lambda(b^{\mu 0}_{s-\ell})u_{\ell+1}^\lambda.
}

Recalling that $s>n/2$, it follows directly from Sobolev's inequality, see Theorem \ref{Sobolev},  and Corollary \ref{mollcor} that the inequalities
\lalign{best}{
\norm{J_\lambda b^{\mu \nu}(t)}_{L^\infty(\Tbb^n)} &\lesssim \norm{J_\lambda b^{\mu \nu}(t)}_{\Hc^{0,s}(\Tbb^n)} \lesssim \norm{b(t)}_{\Ec^{s}(\Tbb^n)} ,  \label{best.1}\\
\norm{\del{t}J_\lambda b^{\mu \nu}(t)}_{L^\infty(\Tbb^n)} &\lesssim \norm{J_\lambda b^{\mu \nu}_1(t)}_{\Hc^{0,s}(\Tbb^n)} \lesssim  \norm{b(t)}_{\Ec^{s+1}(\Tbb^n)}, \label{best.2} \\
\norm{\del{t}J_\lambda b^{\mu 0}_1(t)}_{L^n(\Tbb^n)} &\lesssim  \norm{J_\lambda b^{\mu 0}_2(t)}_{\Hc^{0,s-1}(\Tbb^n)}  \lesssim  \norm{\del{t}b(t)}_{\Ec^{s}(\Tbb^n)}  \label{best.3}
\intertext{and}
\norm{J_\lambda b^{\mu 0}_1(t)}_{L^\infty(\Tbb^n)} &\lesssim
\norm{J_\lambda b^{\mu 0}_1(t)}_{\Hc^{0,s}(\Tbb^n)}
\lesssim \norm{\del{t}b(t)}_{\Ec^{s}(\Tbb^n)}
}
are satisfied for $0\leq t < T$. Also, a slight adaptation of the arguments used to prove Lemma \ref{linlemA} show that
\lalign{pest}{
\norm{p^\mu(t)}_{L^2(\Tbb^n)} &\lesssim \norm{b(t)}_{\Ec^{s}(\Tbb^n)}\norm{u^\lambda(t)}_{E^{s+1,s-1}(\Tbb^n)} \label{pest.1}
\intertext{and}
\norm{\del{t} p^\mu (t)}_{L^2(\Tbb^n)} &\lesssim \bigl(\norm{b(t)}_{\Ec^{s}(\Tbb^n)}+\norm{\del{t}b(t)}_{\Ec^{s}(\Tbb^n)}\bigr)\bigl(\norm{u^\lambda(t)}_{E^{s+1,s-1}(\Tbb^n)}
+ \norm{u^\lambda_s(t)}_{H^1(\Tbb^n)}\bigr)
\label{pest.2}\norm{A(t)}_{\Ec^{s+1}(\Rbb^n)}
}
hold for $0\leq t < T$.

We now are in a position to apply the energy estimates from Theorem \ref{weakthm} to $u_s^\lambda$ since it solves
the wave equation \eqref{linH}. Doing so, we see that these energy estimates in conjunction with the inequalities \eqref{Rbound}, \eqref{best.1}-\eqref{pest.2} and
Corollary \ref{mollcor} show that $u^\lambda_s$ satisfies
\alin{usest1}{
\norm{u^\lambda_s(t)}_{E(\Tbb^n)} \leq C(c_L)\Bigl[\beta(0)&\norm{u^\lambda(0)}_{E^{s+1}(\Tbb^n)}
+ \int_{0}^t \beta(\tau)\Bigl(\norm{u^\lambda(\tau)}_{E^{s+1,s-1}} \\
 &+ \norm{u^\lambda(\tau)}_{E(\Tbb^n)}\Bigr)
+ \norm{f_s(\tau)}_{L^2(\Tbb^n)} + \norm{h_s(\tau)}_{L^2(\Omega_1)} \, d\tau
\Bigr]
}
for $0\leq t < T$, where
\eqn{beta(t)def}{
\beta(t) = 1+\norm{b(t)}_{\Ec^{s}(\Tbb^n)}+\norm{\del{t}b(t)}_{\Ec^{s}(\Tbb^n)}.
}
Combining this estimate with \eqref{uvest2} gives
\lalign{usest2}{
\norm{u^\lambda_s(t)}_{E(\Tbb^n)} &\leq C(c_L)\biggl[\beta(0)\norm{u^\lambda(0)}_{E^{s+1}(\Tbb^n)}  \notag
\\
 + \int_{0}^t \beta(\tau)& \Bigl(\norm{u^\lambda(\tau)}_{E(\Tbb^n)}
+ \norm{f(\tau)}_{\Ec^s(\Tbb^n)} + \norm{h(\tau)}_{E^s(\Omega_1)} +  \norm{\mu(\tau)}_{\Ec^{s-1}(\Tbb^n)} \Bigr)
\, d\tau
\biggr] \label{usest2.1}
}
for $0\leq t < T$.

Together, the estimates \eqref{uvest3.1} and \eqref{usest2.1} show that $u^\lambda$ satisfies the uniform bound
\alin{uvest4}{
\norm{u^\lambda(t)}_{E^{s+1}(\Tbb^n)} &\leq C(c_L)\Bigl[ \beta(0)\norm{u^\lambda(0)}_{E^{s+1}(\Tbb^n)} + \norm{f(0)}_{\Ec^{s-1}(\Tbb^n)}
+ \norm{h(0)}_{E^{s-1}(\Omega_1)} \notag \\
+ \norm{\mu(t)}_{\Ec^{s-1}(\Tbb^n)}& + \int_{0}^t  \beta(\tau)\Bigl(\norm{u^\lambda(\tau)}_{E^{s+1}(\Tbb^n)}
+ \norm{f(\tau)}_{\Ec^s(\Tbb^n)} + \norm{h(\tau)}_{E^s(\Omega_1)} +  \norm{\mu(\tau)}_{\Ec^{s-1}(\Tbb^n)}\Bigr)
\, d\tau
\Bigr] 
}
which, in turn, implies via Gronwall's inequality that
\lalign{uvest5}{
\norm{u^\lambda(t)}_{E^{s+1}(\Tbb^n)}&\leq C(c_L) e^{C(c_L)\int_{0}^T \beta(\tau)\,d\tau} \biggl[ \beta(0)\norm{u^\lambda(0)}_{E^{s+1}(\Tbb^n)}
+ \norm{f(0)}_{\Ec^{s-1}(\Tbb^n)}
+ \norm{h(0)}_{E^{s-1}(\Omega_1)} \notag \\
+& \norm{\mu(t)}_{\Ec^{s-1}(\Tbb^n)}
+
\int_{0}^T \beta(\tau)\Bigl(\norm{f(\tau)}_{\Ec^s(\Tbb^n)} + \norm{h(\tau)}_{E^s(\Omega_1)} + \norm{\mu(\tau)}_{\Ec^{s-1}(\Tbb^n)} \Bigr)\, d\tau \biggr]  \label{uvest5.1}
}
for $0\leq t < T$. This inequality implies that $u^\lambda$ is a bounded 1-parameter family of solutions in $X^{s+1}_T(\Tbb^n)$ to the IVP \eqref{linR.1}-\eqref{linR.2}, and consequently, we know, by standard arguments,
that there exists a weakly convergent subsequence, again denoted $u^\lambda$, that converges as $\lambda \searrow 0$ to the unique weak solution
$u\in X^{s+1}_T(\Tbb^n)$  of \eqref{linM.1}-\eqref{linM.2}. Moreover, by the uniqueness of weak limits, we see, by sending $\lambda \searrow 0$
in the estimate \eqref{uvest5.1}, that $u$ satisfies
\lalign{uvest6}{
\norm{u(t)}_{E^{s+1}(\Tbb^n)}&\leq C(c_L) e^{C(c_L)\int_{0}^T \beta(\tau)\,d\tau} \biggl[ \beta(0)\norm{u(0)}_{E^{s+1}(\Tbb^n)}
+ \norm{f(0)}_{\Ec^{s-1}(\Tbb^n)}
+ \norm{h(0)}_{E^{s-1}(\Omega_1)} \notag \\
+& \norm{\mu(t)}_{\Ec^{s-1}(\Tbb^n)}
+
\int_{0}^T \beta(\tau)\Bigl(\norm{f(\tau)}_{\Ec^s(\Tbb^n)} + \norm{h(\tau)}_{E^s(\Omega_1)} + \norm{\mu(\tau)}_{\Ec^{s-1}(\Tbb^n)} \Bigr)\, d\tau \biggr] \label{uvest6.1}
}
for $0\leq t < T$. Finally, a straightforward calculation shows that the difference $u-u_\lambda$ satisfies a linear equation of the type \eqref{linM.1}-\eqref{linM.2}, and
hence, also an estimate of the type \eqref{uvest6.1} from which it follows that
\eqn{uvest7}{
\norm{u-u_\lambda}_{X^{s+1}_T(\Tbb^n)} \lesssim \norm{u(0)-J_\lambda u(0)}_{E^{s+1}(\Tbb^n)} + c(\lambda)
}
for some constant $c(\lambda)$ satisfying $\lim_{\lambda \searrow 0} c(\lambda) = 0$. Since $u_\lambda \in CX^{s+1}_T(\Tbb^n)$, this estimate implies that
$u_\lambda(t)$ converges uniformly on $[0,T]$ to $u(t)$, thereby showing that $u\in CX^{s+1}_T(\Tbb^n)$.
\end{proof}

With the existence and uniqueness for the system \eqref{linM.1}-\eqref{linM.2} established, it is now straightforward,
using the finite speed of propagation, to conclude an analogous uniqueness and existence result for the original system
\eqref{linIVP.1}-\eqref{linIVP.2}.

\begin{thm} \label{linGthm}
Suppose $n\geq 3$, $s\in \Zbb_{>n/2}$, $T>0$, $A = (A^{\mu\nu}),\del{t}A \in \Xc_T^{s}(\Rbb^n)$, $DA \in \Xc_T^{s-1}(\Rbb^n)$, $F \in \Xc_T^{s}(\Rbb^n)$, $H \in Y^{s}_T(\Omega)$,
$(\Ut_0,\Ut_1) \in \Hc^{2,s+1}(\Rbb^n)\times \Hc^{2,s}(\Rbb^n)$
satisfy the compatibility conditions \eqref{lincompat1}, and let
\gath{linGthm1}{
\alpha = \sup_{0\leq t <  T}\bigl(\norm{A(t)}_{\Ec^s(\Rbb^n)}+\norm{D A (t)}_{\Ec^{s-1}(\Rbb^n)}\bigr), \quad \beta(t) = 1+ \norm{A(t)}_{\Ec^{s}(\Rbb^n)}+ \norm{\del{t}A(t)}_{\Ec^{s}(\Rbb^n)}
\intertext{and}
\gamma = \int_{0}^{T}\beta(\tau) \,d\tau.
}
Then the IVP \eqref{linIVP.1}-\eqref{linIVP.2} has a unique solution
$U \in CX_T^{s+1}(\Rbb^n)$ and  satisfies the estimate
\alin{linGthm2}{
\norm{U(t)}_{ E^{s+1}(\Rbb^n)}& \leq  c(\alpha,\beta(0),\gamma)\biggl[ \norm{U(0)}_{E^{s+1}(\Rbb^n)}
+ \norm{F(0)}_{\Ec^{s-1}(\Rbb^n)} \notag \\ & + \norm{H(0)}_{E^{s-1}(\Omega)}
+ \int_{0}^{T}\beta(\tau)\bigl( \norm{F(\tau)}_{\Ec^s(\Rbb^n)} + \norm{H(\tau)}_{E^s(\Omega)}\bigr)\, d\tau \biggr]
}
for $0\leq t < T$.
\end{thm}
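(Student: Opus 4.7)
The plan is to reduce the IVP \eqref{linIVP.1}-\eqref{linIVP.2} on $\Rbb^n$ to a finite collection of IVPs on the torus to which Theorem~\ref{linthm} applies, and then glue the resulting local solutions together using finite speed of propagation. I begin by disposing of the smooth regions: since $\partial{}\Omega$ is compact and smooth, fix a tubular neighborhood $\Vc$ of $\partial{}\Omega$ in $\Rbb^n$, and observe that on $\Rbb^n\setminus \overline{\Vc}$ the characteristic function $\chi_\Omega$ is locally constant, so \eqref{linIVP.1} reduces there to a standard linear wave equation. Existence, uniqueness, and the natural energy estimates there are provided by standard hyperbolic theory (e.g.\ \cite[Ch.~16]{TaylorIII:1996}), with the solution lying in $CX^{s+1}_T(\Rbb^n\setminus\overline{\Vc})$.

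The substantive work is in $\Vc$. I would cover $\partial{}\Omega$ by finitely many coordinate patches $\Nc_{x_k,\delta}$ ($k=1,\ldots,K$) as in Section~\ref{linit}, pull back the IVP in each patch through $\psi_{x_k,\delta}$ to obtain \eqref{linP.1}-\eqref{linP.2} on $[0,T)\times Q_\delta$, rescale via \eqref{epscale.1}-\eqref{epscale.5} to \eqref{linQ.1}-\eqref{linQ.2} on $[0,T/\delta)\times Q_1$ with small parameter $\ep=\delta^\sigma$, and invoke Proposition~\ref{muprop} to pass to the projected IVP \eqref{linPN.1}-\eqref{linPN.2} on $\Tbb^n$, which has the form \eqref{linM.1}-\eqref{linM.2}. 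I would then fix $\delta\in(0,\delta_0]$ small enough, uniformly in $k$, so that the smallness hypothesis $\ep\leq \min\{1/(3c_L R),\,1/3\}$ of Theorem~\ref{linthm} is met---this is possible since the quantity $R$ of Theorem~\ref{linthm} is controlled by $\alpha$ via Propositions~\ref{scalepropA}-\ref{scalepropB}---and apply Theorem~\ref{linthm} to obtain $u^{(k)}\in CX^{s+1}_{T/\delta}(\Tbb^n)$. Undoing the projection, rescaling, and pullback yields a candidate solution $U^{(k)}$ of \eqref{linIVP.1}-\eqref{linIVP.2} on a spacetime neighborhood of $\{0\}\times \{x_k\}$ inside $[0,T)\times \Nc_{x_k,\delta}$; its extent is controlled by finite speed of propagation in the rescaled coordinates, where the principal symbol is a small perturbation of Minkowski.

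Next I would glue the pieces. Uniqueness for \eqref{linIVP.1}-\eqref{linIVP.2} on overlaps follows from a standard hyperbolic energy estimate for the difference of two solutions (the irregular term $\chi_\Omega H$ is a prescribed source and so does not enter the energy identity), so the local patches $U^{(k)}$, together with the interior/exterior solution from the smooth region, assemble into a single solution $U$ on a short time slab $[0,T_1)\times \Rbb^n$ with $T_1\gtrsim \delta$. To reach the full interval $[0,T)$, I would iterate on successive time steps of length $\sim\delta$, using $(U(T_j),\del{t}U(T_j))$ as new initial data at each restart; these data inherit the required compatibility because the higher time derivatives needed in \eqref{lincompat1} are generated from the evolution equation, which $U$ already satisfies. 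Because the Theorem~\ref{linthm} estimate carries an exponential factor $\exp(C(c_L)\int_{T_j}^{T_{j+1}}\beta\,d\tau)$ whose exponents add across adjacent subintervals, the accumulated estimate across $[0,T)$ collapses to $\exp(C(c_L)\gamma)$ multiplying the data norms, producing \eqref{linGthm2}.

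The main obstacle I anticipate is the bookkeeping of this iteration. I must verify that a single $\delta$ suffices simultaneously for every $k$ in the finite cover, and that the $O(T/\delta)$ restarts do not introduce extra factors depending on $\delta$. The latter is the subtle point: at each restart the bound on the initial-data contribution picks up a new multiplicative constant of the form $C(c_L)\beta(T_j)$, and one must check that the composition of Gr\"onwall-type estimates telescopes into a single exponential in $\gamma$ rather than compounding into $C^{T/\delta}$. This rests on the precise form of Theorem~\ref{linthm}, in which $\beta(0)$ is attached only to the initial datum while $\beta(\tau)$ appears inside the integral, and is the analogue of Koch's argument in \cite{Koch:1993}.
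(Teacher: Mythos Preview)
Your proposal is correct and follows essentially the same route as the paper's proof: localize away from $\partial\Omega$ (standard hyperbolic theory), near $\partial\Omega$ straighten--rescale--project to the torus via Proposition~\ref{muprop}, apply Theorem~\ref{linthm}, undo the transformations using finite speed of propagation (the key fact that $\mu$ and $\psi$ vanish on $Q_{\eta_0}$), glue the finitely many patches via uniqueness, and then iterate in time to reach the full interval. You are in fact more explicit than the paper about the iteration bookkeeping: the paper dispatches this with the single sentence ``Iterating this estimate a finite number of times shows that we can take $T^*=T$,'' relying implicitly on the observation $1/\delta \lesssim 1+\bar\alpha^{1/\sigma}$ so that the number of restarts depends only on $\alpha$ and $T$, whence any compounding of constants is absorbed into $c(\alpha,\beta(0),\gamma)$.
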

\begin{proof}
We begin by letting $u \in CX^{s+1}_T(\Tbb^n)$ be the unique solution to the IVP \eqref{linPN.1}-\eqref{linPN.2} from Theorem \ref{linthm} with $\delta \in (0,1]$ chosen small enough so that $\ep=\delta^\sigma$ satisfies
\leqn{linGthm3}{
\ep = \min\left\{\frac{1}{3 c_L R},\frac{1}{3}\right\}
}
with
\leqn{linGthm5}{
R = \sup_{0\leq t <  T}\bigl(\norm{b(t)}_{\Ec^s(\Tbb^n)}+\norm{Db(t)}_{\Ec^{s-1}(\Tbb^n)}\bigr).
}
We know from Proposition \ref{muprop} that there exists an $\eta_0 \in (0,1/4]$ such that the initial data $(u|_{t=0},\del{t}u|_{t=0})$, the source term $\mu(t)$ and
the coefficient $\psi$ satisfy
\alin{linGthm6}{
\bigl(u|_{\{0\}\times Q_{\eta_0}},\del{t}u|_{\{0\}\times Q_{\eta_0}}\bigr) &= \bigl(\ut_0|_{Q_{\eta_0}},\ut_1|_{Q_{\eta_0}}\bigr),\\
\psi|_{Q_{\eta_0}} &= 0
\intertext{and}
\mu|_{[0,T]\times Q_{\eta_0}} & = 0,
}
respectively. Appealing to the finite propagation speed for solutions of wave equations, we see, shrinking $T$ if necessary, that
 $u$ solves the IVP \eqref{linQ.1}-\eqref{linQ.2} on the spacetime region $[0,T)\times Q_{\eta_0/2}$. Moreover, it
 follows directly
  from Theorem \ref{linthm} and
 Propositions \ref{muprop}, \ref{scalepropA} and \ref{scalepropB} that $u$ satisfies the estimate
\lalign{linGthm7}{
 \norm{u(t)}_{E^{s+1}(Q_{\eta_0/2})}\leq & c(\alphab,\betab(0),\gammab)
\Biggl[ \norm{U(0)}_{E^{s+1}(Q_1)}
+ \delta \norm{\Fb(0)}_{\Ec^{s-1}(Q_1)} \notag \\ & +  \delta \norm{\Hb(0)}_{E^{s-1}(Q_1^+)}
+ \int_{0}^{\delta T}\betab(\tau)\bigl( \norm{\Fb(\tau)}_{\Ec^s(Q_1)} + \norm{\Hb(\tau)}_{E^s(Q^+_1)}\bigr)\, d\tau \Biggr]  \label{linGthm7.1}
}
for $0\leq t < T$, where
\lalign{abgdef}{
\alphab &= \sup_{0\leq t <  \delta T}\bigl(\norm{\Ab(t)}_{\Ec^s(Q_1)}+\norm{D\Ab(t)}_{\Ec^{s-1}(Q_1)}\bigr), \label{abgdef.1}\\
\betab(t) &= 1+ \norm{\Ab(t)}_{\Ec^{s}(Q_1)}+\norm{\del{t}\Ab(t)}_{\Ec^{s}(Q_1)}, \notag \\ 
\gammab &= \frac{1}{\delta}\int_{0}^{\delta T}\betab(\tau) \,d\tau  \notag
}
and $\Ub$,$\Ab$,$\Fb$ and $\Hb$ are as defined previously by \eqref{epscale.1}-\eqref{epscale.5}. Next,
a simple change of variable argument shows that
\eqn{linGthm8}{
\norm{\Ub(t)}_{ E^{s+1}(Q_{\delta \eta_0/2} )} \leq c(1/\delta)\norm{u(t/\delta)}_{E^{s+1}(Q_{\eta_0/2})}
}
for $0 \leq t < \delta T$, while the inequality
\eqn{linGthm9}{
\frac{1}{\delta} \lesssim 1+\alphab^{1/\sigma}
}
follows from \eqref{linGthm3}, \eqref{abgdef.1} and Proposition \ref{scalepropA}. Using these estimates together with \eqref{linGthm7.1},
we see that
\lalign{linGthm10}{
\norm{\Ub(t)}_{ E^{s+1}(Q_{\delta\eta_0/2} )} \leq & c(\alphab,\betab(0),\gammab)
\biggl[ \norm{\Ub(0)}_{E^{s+1}(Q_1)}
+ \norm{\Fb(0)}_{\Ec^{s-1}(Q_1)} \notag \\ & + \norm{\Hb(0)}_{E^{s-1}(Q_1^+)}
+ \int_{0}^{T^*}\betab(\tau)\bigl( \norm{\Fb(\tau)}_{\Ec^s(Q_1)} + \norm{\Hb(\tau)}_{E^s(Q^+_1)}\bigr)\, d\tau \biggr] \label{linGthm10.1}
}
for $0\leq t < T^*$, where $T^*=\delta T$.

Since $u$ solves the IVP \eqref{linQ.1}-\eqref{linQ.2} on the spacetime region $[0,T)\times Q_{\eta_0/2}$, $\Ub$ must solve the
IVP \eqref{linP.1}-\eqref{linP.2} on $[0,T^*)\times Q_{\delta\eta/2}$. Recalling the definitions \eqref{barvars.1}-\eqref{barvars.5}, it is clear that
$U(t,x)=\Ub(t,\Phi_{x_0,\delta\eta_0/2}(x))$ satisfies the IVP \eqref{linIVP.1}-\eqref{linIVP.2} on
$[0,T^*)\times \Nc_{x_0,\delta\eta_0/2}$. We also see, with the help of \eqref{linGthm10.1}, that
$U$ satisfies the estimate
\lalign{linGthm11}{
\norm{U(t)}_{ E^{s+1}(\Nc_{x_0,\eta_0/2})}& \leq  c(\alpha,\beta(0),\gamma) \biggl[ \norm{U(0)}_{E^{s+1}(\Rbb^n)}
+ \norm{F(0)}_{\Ec^{s-1}(\Rbb^n)} \notag \\ & + \norm{H(0)}_{E^{s-1}(\Omega)}
+ \int_{0}^{T^*}\beta(\tau)\bigl( \norm{F(\tau)}_{\Ec^s(\Rbb^n)} + \norm{H(\tau)}_{E^s(\Omega)}\bigr)\, d\tau \biggr]  \label{linGthm11.1}
}
for $0\leq t < T^*$, where
\gath{linGthm12a}{
\alpha = \sup_{0\leq t \leq  T^*}\bigl(\norm{A(t)}_{\Ec^s(\Rbb^n)}+\norm{D A (t)}_{\Ec^{s-1}(\Rbb^n)}\bigr), \quad
\beta(t) = 1+ \norm{A(t)}_{\Ec^{s}(\Rbb^n)}+\norm{\del{t}A(t)}_{\Ec^{s}(\Rbb^n)}
\intertext{and}
\gamma = \int_{0}^{T^*}\beta(\tau) \,d\tau.
}

Since $x_0\in \Omega$ was chosen arbitrarily and $\Omega$ is bounded, we can, using the finite propagation speed and the uniqueness of
solutions, piece together a finite number of solutions $\{U_{j}\}_{j=1}^M$  to  \eqref{linIVP.1}-\eqref{linIVP.2} defined on regions
$\{[0,T^*)$$\times$ $\Nc_{x_j,\delta\eta_0/2}\}_{j=1}^M$ such that $\partial\Omega$ $\subset$ $\cup_{j=1}^M \Nc_{x_j,\delta\eta_0/2}$ to
obtain
a solution $\hat{U}$ to \eqref{linIVP.1}-\eqref{linIVP.2} defined on $[0,T^*)\times \Nc$, where $\Nc$ is an open neighborhood
of $\del{}\Omega$. Away from the boundary $\del{}\Omega$, the existence and uniqueness of solutions to  \eqref{linIVP.1}-\eqref{linIVP.2}
satisfying the usual energy estimates is
guaranteed by standard results. Piecing together this solution with $\hat{U}$, we obtain
a solution to \eqref{linIVP.1}-\eqref{linIVP.2} on a time interval $[0,T^*)$ with $T^* > 0$ independent of the initial data. Moreover, it is clear from \eqref{linGthm11.1} and the familiar energy
estimates for wave equations, that $U$ satisfies the estimate
\alin{linGthm12}{
\norm{U(t)}_{ E^{s+1}(\Rbb^n)}& \leq  c(\alpha,\beta(0),\gamma)\biggl[ \norm{U(0)}_{E^{s+1}(\Rbb^n)}
+ \norm{F(0)}_{\Ec^{s-1}(\Rbb^n)} \\ & + \norm{H(0)}_{E^{s-1}(\Omega)}
+ \int_{0}^{T^*}\beta(\tau)\bigl( \norm{F(\tau)}_{\Ec^s(\Rbb^n)} + \norm{H(\tau)}_{E^s(\Omega)}\bigr)\, d\tau \biggr]
}
for $0\leq t < T^*$. Iterating this estimate a finite number of times shows that we can take $T^*=T$. Finally, we note that uniqueness follows directly
from Theorem \ref{weakthm}.
\end{proof}

\sect{exist}{Proof of Theorem \ref{mainthmA} }

We are now ready to prove Theorem \ref{mainthmA}.

\subsect{eandu}{Existence and uniqueness} We establish existence and uniqueness of solutions using the
well-known strategy, also used by Koch \cite{Koch:1993}, of
setting up an appropriate iterative approximation scheme and showing convergence by establishing boundedness in a high norm
followed by contraction in a low norm.

\subsubsect{bound}{Boundedness in the high norm}
To establish the boundedness in the high norm, we
begin by defining the set
\eqn{Bcdef}{
\Bc_R = \{ \, Z \in CX^{s+1}_T(\Rbb^3) \, |\, \del{t}^\ell Z(0)=\Ut_\ell \AND \norm{Z}_{X^{s+1}_T(\Rbb^3)} \leq R \, \},
}
where the $\Ut_\ell$ are as defined by the initial data \eqref{compat1} that satisfy the compatibility conditions \eqref{compat2}.
We consider the map
\eqn{JmapA}{
J_T : \Bc_R \longrightarrow CX^{s+1}_{T}(\Rbb^3)
}
defined by
\eqn{JmapB}{
J_T(U) = Z,
}
where $Z$ is the unique solution to the IVP:
\lalign{JmapC}{
\del{\mu}\bigl(A^{\mu \nu}(U) \del{\nu} Z\bigr)  &= F(U,\del{}U)
+ \chi_{\Omega} H(U, \del{} U) \quad \text{in $[0,T)\times \Rbb^n$}, \notag \\ 
(Z,\del{t}Z)|_{t=0} & = (\Ut_0,\Ut_1) \quad \text{in $\Rbb^n$} \notag . 
}

Next, we define
\eqn{murho}{
\mu = \norm{U(0)}_{E^{s+1}(\Rbb^n)}.
}
Then the bounds
\lalign{FHbnds}{
\norm{A(U)}_{\Xc^{s+1}_T(\Rbb^n)} &\leq C(R), \label{FHbnds.0}\\
\norm{F(U,\del{}U)}_{\Xc^s_T(\Rbb^n)} &\leq C(R), \label{FHbnds.1} \\
\norm{F(U(0),\del{}U(0))}_{\Ec^{s-1}(\Rbb^n)} &\leq C(\mu), \label{FHbnds.2} \\
\norm{H(U,\del{}U)}_{X^s_T(\Omega)} &\leq C(R), \label{FHbnds.3}
\intertext{and}
\norm{H(U(0),\del{}U(0))}_{E^{s-1}(\Omega)} &\leq C(\mu) \label{FHbnds.4}
}
follow directly from Proposition \ref{fpropB}. Writing $A(U)$ as
\alin{AbndA}{
A(U(t)) = A(U(0)) + \int_{0}^t DA(U(\tau))\cdot \del{t}U(\tau) \, d\tau,
}
we see with the help of Proposition \ref{fpropB} that
\leqn{AbndB}{
\norm{A(U)}_{\Xc^{s}_T(\Rbb^n)} + \norm{D[A(U)]}_{\Xc^{s-1}_T(\Rbb^n)} \leq C(\mu) + T C(R).
}
Theorem \ref{linGthm} in conjunction with the bounds \eqref{FHbnds.0}-\eqref{AbndB} then implies that $Z$
satisfies the estimate
\eqn{ZestA}{
\norm{Z}_{CX^{s+1}_T(\Rbb^n)} \leq c(\mu, TC(R)).
}
From this estimate, it is clear that we can arrange that
\eqn{ZestB}{
\norm{Z}_{CX^{s+1}_T(\Rbb^n)} < R
}
by choosing $R$ large enough and $T$ sufficiently small. This shows that $J_T$ satisfies
\leqn{JmapD}{
J_T\bigl(\Bc_R\bigr) \subset \Bc_R,
}
thereby establishing the boundedness in the high norm.

\subsubsect{contract}{Contraction in the low norm}

Choosing $U_0,U_1 \in \Bc_R$, we set
\eqn{contractA}{
Z_0 = J_T(U_0) \AND Z_1 = J_T(U_1).
}
Then  $Z_0-Z_1$ satisfies
\lalign{lnormA}{
\del{\mu}\bigl(A^{\mu\nu}(U_0)\del{\nu}(Z_0-Z_1)\bigr) &= F(U_0,\del{}U_0)-F(U_1,\del{}U_1) + \chi_\Omega\bigl( H(U_0,\del{}U_0)-H(U_1,\del{}U_1)\bigr) \notag \\
& \text{\hspace{2.0cm}} - \del{\mu}\bigl( \bigl[A^{\mu\nu}(U_0)-A^{\mu\nu}(U_1) \bigr]\del{\nu}Z_1\bigr)
\quad \text{in $[0,T)\times \Rbb^n$} ,  \label{lnormA.1} \\
\del{t}^\ell (Z_0-Z_1)|_{t=0} &  = 0 \quad \text{in $\Rbb^n$ for $\ell=0,1,\ldots,s$.} \label{lnormA.2}
}
Writing $F(U_0,\del{}U_0)-F(U_1,\del{}U_1)$ as
\eqn{contractB}{
F(U_0,\del{}U_0)-F(U_1,\del{}U_1) = \int_{0}^1 DF\bigl(U_1 + \tau (U_0-U_1),\del{} U_1 + \tau ( \del{}U_0-\del{}U_1)\bigr)\, d\tau \cdot (U_0-U_1,\del{} U_0-\del{} U_1),
}
we see that
\alin{contractC}{
&\norm{F(U_0,\del{}U_0)-F(U_1,\del{}U_1) }_{L^2(\Rbb^n)} \\
&\text{\hspace{1.0cm}} \leq \left\|\int_{0}^1 DF\bigl(U_1 + \tau (U_0-U_1),\del{} U_1 + \tau ( \del{}U_0-\del{}U_1)\bigr)\, d\tau \right\|_{L^\infty(\Rbb^n)}\norm{U_0-U_1}_{H^{1}(\Rbb^n)}  \\
&\text{\hspace{1.0cm}} \leq  \left\|\int_{0}^1 DF\bigl(U_1 + \tau (U_0-U_1),\del{} U_1 + \tau ( \del{}U_0-\del{}U_1)\bigr)\, d\tau \right\|_{\Hc^{0,s}(\Rbb^n)}\norm{U_0-U_1}_{H^{1}(\Rbb^n)}.
}
where in deriving the last line we have used Sobolev's inequality, see Theorem \ref{Sobolev}. Applying Proposition \ref{fpropB} to the above expression, we obtain the estimate
\leqn{contractD}{
\norm{F(U_0,\del{}U_0)-F(U_1,\del{}U_1) }_{L^2(\Rbb^n)} \leq C(R)\norm{U_0-U_1}_{E(\Rbb^n)}.
}
Similar calculations together with the bound \eqref{JmapD} also show that
\lalign{contractE}{
\norm{\chi_\Omega\bigl(H(U_0,\del{}U_0)-H(U_1,\del{}U_1) \bigr) }_{L^2(\Rbb^n)} &\leq C(R)\norm{U_0-U_1}_{E(\Rbb^n)}, \label{contractE.1} \\
\norm{\del{\mu}\bigl( \bigl[A^{\mu\nu}(U_0)-A^{\mu\nu}(U_1) \bigr]\del{\nu}Z_1\bigr)}_{L^2(\Rbb)} &\leq  C(R)\norm{U_0-U_1}_{E(\Rbb^n)} \label{contractE.2}
}
and
\leqn{contractF}{
\norm{A^{\mu\nu}(U_0)-A^{\mu\nu}(0)}_{L^\infty(\Rbb^n)} + \norm{\del{t}[A^{\mu\nu}(U_0)]}_{L^\infty(\Rbb^n)} \leq C(R).
}

Since $Z_0-Z_1$ satisfies the IVP \eqref{lnormA.1}-\eqref{lnormA.2}, we are in a position to apply  the energy estimates for
weak solutions of wave equation from Theorem \ref{weakthm} to conclude, with the help of the bounds \eqref{contractD}-\eqref{contractF}, that $Z_0-Z_1$
satisfies the estimate
\eqn{contractG}{
\norm{Z_0(t)-Z_1(t)}_{E(\Rbb^n)} \leq C(R)\int_{0}^T  \norm{Z_0(\tau)-Z_1(\tau)}_{E(\Rbb^n)} + \norm{U_0(\tau)-U_1(\tau)}_{E(\Rbb^n)}\, d\tau
}
for  $0\leq t < T$.
Appealing to Gronwall's inequality, we see that
\eqn{contractH}{
\norm{Z_0-Z_1}_{X_T^1(\Rbb^n)} \leq C(R)e^{C(R)T} T  \sup_{0\leq t < T}\norm{U_0-U_1}_{X_T^1(\Rbb^n)}.
}
Choosing $T>0$ small enough, we get that
\eqn{contractK}{
\norm{J_T(U_0)-J_T(U_1)}_{X_T^1(\Rbb^n)} \leq \Half  \norm{U_0-U_1}_{X_T^1(\Rbb^n)},
}
and so, $J_T$ defines a contraction map on the subset
\eqn{contractL}{
\Bc_R \subset CX^{1}_T(\Rbb^n).
}
In particular, for any $U_0\in \Bc_R$, the sequence
\eqn{contractM}{
U_n = \overset{\text{n times}}{\overbrace{J_T \circ \cdots \circ J_T}}(U_0) \quad n=1,2\ldots
}
converges to a unique fixed point $U \in CX^{1}_T(\Rbb^n)$ of $J_T$, that is
$J_T(U) = U$
or in other words, a weak solution of the IVP \eqref{waveA.1}-\eqref{waveA.2}. Since the sequence $U_n$ is bounded in $CX^{s+1}_{T}(\Rbb^n)$
by virtue of the mapping property \eqref{JmapD} of $J_T$, we have, after passing to a subsequence, that $U_n$ converges weakly in $X^{s+1}_T(\Rbb^n)$
to a limit that must coincide with $U$ by the uniqueness property of weak limits. Consequently, $U$ satisfies the additional regularity
$U \in X^{s+1}_T(\Rbb^n)$, which can be upgraded to $U \in CX^{s+1}_T(\Rbb^n)$ with the help of Theorem \ref{linGthm}.

\subsubsect{unique}{Uniqueness}
Before we establish uniqueness, we first note that the inclusion
\eqn{uniqueAa}{
CX_{T}^{s+1}(\Rbb^n) \subset CX_{T}^2(\Rbb^n) \cap \bigcap_{\ell=0}^1 C^\ell\bigl([0,T),W^{1-\ell,\infty}(\Rbb^n)\bigr)
}
follows directly from Sobolev's inequality since $s>n/2$ by assumption.
To prove uniqueness, we suppose that
\lgath{uniqueA}{
U_0 \in  CX_{T}^2(\Rbb^n) \cap \bigcap_{\ell=0}^1 C^\ell\bigl([0,T),W^{1-\ell,\infty}(\Rbb^n)\bigr)  
\intertext{and}
U_1  \in CX^{s+1}_T(\Rbb^n) 
}
are two solutions of the IVP \eqref{waveA.1}-\eqref{waveA.2}. Then the difference $U_0-U_1$ satisfies
\lalign{uniqueB}{
&\del{\mu}\bigl(A^{\mu\nu}(U_0)\del{\nu}(U_0-U_1)\bigr) = F(U_0,\del{}U_0)-F(U_1,\del{}U_1) + \chi_\Omega\bigl( H(U_0,\del{}U_0)-H(U_1,\del{}U_1)\bigr)\notag\\
& \text{\hspace{6.0cm}} - \del{\mu}\bigl( \bigl[A^{\mu\nu}(U_0)-A^{\mu\nu}(U_1) \bigr]\del{\nu}U_1\bigr)
\quad \text{in $[0,T)\times \Rbb^n$,}  \label{uniqueB.1} \\
&\bigl( (U_0-U_1), \del{t}(U_0-U_1)\bigr)|_{t=0}   = (0,0) \quad \text{in $\Rbb^n$.}  \label{uniqueB.2}
}
Using similar arguments as in the previous section, it is not difficult to derive the bounds
\lalign{uniqueC}{
\norm{F(U_0,\del{}U_0)-F(U_1,\del{}U_1) }_{L^2(\Rbb^n)} &\leq C(\rho_0,\rho_1)\norm{U_0-U_1}_{E(\Rbb^n)}, \label{uniqueC.1}\\
\norm{\chi_\Omega\bigl(H(U_0,\del{}U_0)-H(U_1,\del{}U_1) \bigr) }_{L^2(\Rbb^n)}&\leq C(\rho_0,\rho_1)\norm{U_0-U_1}_{E(\Rbb^n)} \label{uniqueC.2}
}
and
\leqn{uniqueD}{
\norm{A^{\mu\nu}(U_0)-A^{\mu\nu}(0)}_{L^\infty(\Rbb^n)} + \norm{\del{t}[A^{\mu\nu}(U_0)]}_{L^\infty(\Rbb^n)} \leq C(\rho_0),
}
where
\eqn{uniqueE}{
\rho_0 = \sup_{0\leq t < T}\Bigl[ \norm{U_0(t)}_{W^{1,\infty}(\Rbb^n)} + \norm{\del{t}U_0(t)}_{L^\infty(\Rbb^n)} \Bigr] \AND \rho_1 = \norm{U_1}_{X^{s+1}_T(\Rbb^n)}.
}
We also observe that
\lalign{uniqueF}{
\norm{\del{\mu}\bigl( \bigl[A^{\mu\nu}(U_0)-A^{\mu\nu}(U_1) \bigr]\del{\nu}U_1\bigr)}_{L^2(\Rbb^n)}  \leq &\norm{\del{\mu} \bigl[A^{\mu\nu}(U_0)-A^{\mu\nu}(U_1) \bigr]}_{L^2(\Rbb^n)}
\norm{\del{}U_1}_{L^\infty(\Rbb^n)}\notag \\
&+ \norm{A^{\mu\nu}(U_0)-A^{\mu\nu}(U_1)}_{L^\infty(\Rbb^n)}\norm{\del{}^2U_1}_{L^\infty(\Rbb^n)} \notag\\
\leq   \norm{\del{\mu} \bigl[A^{\mu\nu}(U_0)-A^{\mu\nu}(U_1) \bigr]}_{L^2(\Rbb^n)}&\rho_1 + \norm{A^{\mu\nu}(U_0)-A^{\mu\nu}(U_1)}_{L^{2n/(n-2)}(\Rbb^n)}\norm{\del{}^2U_1}_{L^n(\Rbb^n)} \notag \\
\leq  \norm{\del{\mu} \bigl[A^{\mu\nu}(U_0)-A^{\mu\nu}(U_1) \bigr]}_{L^2(\Rbb^n)}&\rho_1 + \norm{A^{\mu\nu}(U_0)-A^{\mu\nu}(U_1)}_{H^1(\Rbb^n)}\norm{\del{}^2U_1}_{L^n(\Rbb^n)}\label{uniqueF.1},
}
where in deriving the last inequality we used Sobolev's inequality. Again, using similar arguments as in the previous section, it is not difficult to verify that
\leqn{uniqueG}{
 \norm{\del{\mu} \bigl[A^{\mu\nu}(U_0)-A^{\mu\nu}(U_1) \bigr]}_{L^2(\Rbb^n)} + \norm{A^{\mu\nu}(U_0)-A^{\mu\nu}(U_1)}_{H^1(\Rbb^n)} \leq C(\rho_0,\rho_1)\norm{U_0-U_1}_{E(\Rbb^n)}.
}
Finally, we estimate
\lalign{uniqueH}{
\norm{\del{}^2U_1}_{L^n(\Rbb^n)} &\leq \max\{\norm{\del{}^2U_1}_{L^n(\Omega)},\norm{\del{}^2U_1}_{L^n(\Omega^c)}\} \notag \\
& \lesssim  \max\{\norm{\del{}^2U_1}_{H^{s-1}(\Omega)},\norm{\del{}^2U_1}_{H^{s-1}(\Omega^c)}\} \notag \\
& \lesssim \norm{\del{}^2 U_1}_{H^{0,s-1}(\Rbb^n)} \notag \\
& \lesssim  \sum_{\ell=0}^2 \norm{\del{t}^\ell U_1}_{H^{2,s+1-\ell}(\Rbb^n)}  \notag \\
& \lesssim \rho_1, \label{uniqueH.1}
}
where we have again used Sobolev's inequality and the assumption $s>n/2$.

Since $U_0-U_1$ satisfies the IVP \eqref{uniqueB.1}-\eqref{uniqueB.2}, we can apply the energy estimates for
weak solutions of linear wave equation from Theorem \ref{weakthm} to conclude, with the help of the bounds \eqref{uniqueC.1}-\eqref{uniqueH.1}, that $U_0-U_1$
satisfies the estimate
\eqn{uniqueI}{
\norm{U_0(t)-U_1(t)}_{E(\Rbb^n)} \leq C(\rho_0,\rho_1)\int_{0}^T \norm{U_0(\tau)-U_1(\tau)}_{E(\Rbb^n)}\, d\tau
}
for  $0\leq t < T$, which in turn, implies that
\eqn{uniqueJ}{
\norm{U_0(t)-U_1(t)}_{E(\Rbb^n)} = 0 \quad 0\leq t < T,
}
by Gronwall's inequality. We conclude that $U_0=U_1$ and uniqueness holds.

\subsect{cont}{The continuation principle}

In order to establish the continuation principle, we assume, by way of contradiction, that $U \in CX^{s+1}_T(\Rbb^n)$
$(s\in \Zbb_{> n/2})$ is a solution of the wave equation \eqref{waveA.1} satisfying
\leqn{contassumpA}{
\limsup_{t\nearrow T} \norm{U(t)}_{E^{s+1}(\Rbb^n)} = \infty
}
and
\leqn{contassumpB}{
\norm{U}_{W^{1,\infty}((0,T)\times\Rbb^n)} \leq K < \infty.
}
Using the property of finite speed of propagation, it is enough to show that $U$ cannot locally satisfy
both \eqref{contassumpA} and \eqref{contassumpB}, where we can, by suitably shifting the origin of the time coordinate, assume that
$T$ is small as we like. We note that away from the boundary $\del{}\Omega$ where
there are no singular terms in the wave equation \eqref{waveA.1}, we can appeal to the standard continuation
principle, see for example \cite[Theorem 2.2]{Majda:1984}, to conclude that, locally, the solution cannot satisfy both \eqref{contassumpA} and
\eqref{contassumpB}. In light of this observation, we need only worry about the behavior of $U$ in a neighborhood of the boundary
$\del{}\Omega$ for arbitrarily small times. Furthermore, since
\eqn{scaleinvar}{
\norm{u}_{W^{1,\infty}((0,T),Q_1)} \lesssim \norm{\Ub}_{W^{1,\infty}((0,\delta T)\times Q_\delta)} \lesssim K
}
for all $ \delta \in (0,1]$ where $u$ and $\Ub$ are as defined previously by \eqref{epscale.1} and \eqref{barvars.1}, respectively,
it is enough to consider the
solution $U$ on an arbitrary small spacetime neighborhoods $(x_0,0)$ with $x_0 \in \del{}\Omega$ that satisfy the bound
\eqref{contassumpB}. We can, therefore,
use the scaling and projection technique from Sections \ref{linit} and \ref{linrs} to reduce the continuation
question to that of proving a continuation principle for the following scaled and projected system where we may choose
$\delta$ as small as we like:
\lalign{contnev}{
\del{\nu}\bigl((m^{\mu\nu} + \delta b^{\mu\nu}(\xv,u))\del{\mu} u\bigr) - \psi u &=
\delta(f(\xv,u\del{}u)+ \chi_{\Om}h(\xv,u\del{}u)) + \mu \quad \text{in $[0,T)\times \Tbb^n$,}\label{contev.1} \\
(u,\del{t}u)|_{t=0} &= (\hat{u}_0,\hat{u}_1) \quad \text{in $\Tbb^n$,} \label{contev.2}
}
where
\begin{itemize}
\item[(i)]
\alin{contscaleA}{
\hat{u}_0(x) &= \phi_1(x)\frac{U(\psi_{x_0,\delta}(0,\delta x))-U(\psi_{x_0,\delta}(\zero))}{\delta},\\
\hat{u}_1(x) &=  \phi_1(x) \del{t}U(0,\Psi_{x_0,\delta}(\delta x)),\\
(m^{\mu\nu}) &= \bigl(\det(J(\zero))\Jch^\mu_\alpha(\zero)\Jch^\nu_\beta(\zero)A^{\alpha\beta}(U(\psi_{x_0,\delta}(\zero))) \bigr) =
\text{diag}(-1,1,\ldots,1), \\
b^{\mu\nu}(\xv,u) &= \phi_1(x)\frac{\det(J(\delta\xv))\Jch^\mu_\alpha(\delta\xv)\Jch^\nu_\beta(\delta\xv)A^{\alpha\beta}\bigl(U(\psi_{x_0,\delta}(\zero))+\delta u)-m^{\mu\nu}}{\delta}, \\
f(\xv,u,\del{}u) &= \phi_1(x) \det(J(\delta\xv))F\bigl(U(\psi_{x_0,\delta}(\zero))+\delta u,\Jch(\delta\xv)\del{}u\bigr),
\intertext{and}
H(\xv,u,\del{}u) &= \phi_1(x) \det(J(\delta\xv))H\bigl(U(\psi_{x_0,\delta}(\zero))+\delta u,\Jch(\delta\xv)\del{}u\bigr),
}
\item[(ii)] $0<\delta \leq 1$, 
\item[(iii)]
the initial data $(\hat{u}_0,\hat{u}_1)$ satisfies
\alin{contscaleB}{
\norm{\hat{u}_0}_{\Hc^{s+1,2}(\Tbb^n)} \lesssim \norm{U(0)}_{\Hc^{s+1,2}(\Rbb^n)}
\intertext{and}
\norm{\hat{u}_0}_{\Hc^{s,2}(\Tbb^n)} \lesssim \norm{\del{t}U(0)}_{\Hc^{s,2}(\Rbb^n)},
}
\item[(iv)] and
\eqn{contscaleC}{
\mu = \sum_{\ell=0}^{s-1} \frac{t^\ell}{\ell !} \mu_\ell
}
with the $\mu_\ell$ determined as in Proposition \ref{muprop} so that
\alin{contscaleD}{
\norm{u(0)}_{E^{s+1}} &\lesssim \norm{U(0)}_{E^{s+1}(\Rbb^n)}
\intertext{and}
\norm{\mu(t)}_{\Ec^{s-1}(\Tbb^n)} & \leq (1+t^{s-1})C\bigl(\norm{U(0)}_{E^{s+1}(\Rbb^n)}\bigr).
}
\end{itemize}

In light of the above discussion, the following proposition completes the proof of Theorem \ref{mainthmA}.
\begin{prop} \label{contprop}
Suppose $u_\delta \in CX^{s+1}_{T_{\delta}(\Rbb^n)}$ is a family of solutions depending on $\delta \in (0,1]$ to the IVP \eqref{contev.1}-\eqref{contev.2} satisfying
the conditions (i)-(iv) above. If
\eqn{cotprop1}{
\norm{u_\delta}_{W^{1,\infty}((0,T_\delta)\times\Tbb^n)} \leq K < \infty,
}
then there exists a $\delta_0>0$ and a time $T^*_\delta > T_\delta$ for each $\delta \in (0,\delta_0]$ such that the solution $u_\delta$
can be continued to a solution of \eqref{contev.1}-\eqref{contev.2} on $[0,T^*_\delta)\times \Tbb^n$.
\end{prop}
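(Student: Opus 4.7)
The continuation principle reduces to producing an a priori bound on $\norm{u_\delta(t)}_{E^{s+1}(\Tbb^n)}$ that depends only on $K$, the initial data $U(0)$ of the original problem, and $T_\delta$. Once such a bound is in hand, I would apply the already-established local existence part of Theorem \ref{mainthmA} at a time $t_0 \in (0, T_\delta)$ sufficiently close to $T_\delta$: this produces a local extension whose length depends only on the high-order norm of the data at $t_0$, and uniqueness lets one glue the extension to $u_\delta$, yielding a solution on $[0, T^*_\delta)$ with $T^*_\delta > T_\delta$.

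To obtain the a priori bound, we view \eqref{contev.1} as a linear wave equation for $u_\delta$ with coefficient $b^{\mu\nu}(\xv, u_\delta)$ and forcing $\delta (f + \chi_{\Om} h) + \mu$, and we apply Theorem \ref{linthm}. The $W^{1,\infty}$ hypothesis provides
\eqn{Linfbnd}{
\norm{u_\delta(t)}_{L^\infty(\Tbb^n)} + \norm{\del{t}u_\delta(t)}_{L^\infty(\Tbb^n)} + \norm{Du_\delta(t)}_{L^\infty(\Tbb^n)} \leq K,
}
so that Moser-type composition estimates (analogous to those encoded in Proposition \ref{fpropB}) furnish
\alin{moserbnds}{
\norm{b^{\mu\nu}(\xv, u_\delta(t))}_{L^\infty(\Tbb^n)} + \norm{\del{t}b^{\mu\nu}(\xv, u_\delta(t))}_{L^\infty(\Tbb^n)} &\leq C(K), \\
\norm{b^{\mu\nu}(\xv, u_\delta(t))}_{\Ec^s(\Tbb^n)} + \norm{Db^{\mu\nu}(\xv, u_\delta(t))}_{\Ec^{s-1}(\Tbb^n)} + \norm{\del{t}b^{\mu\nu}(\xv, u_\delta(t))}_{\Ec^s(\Tbb^n)} &\leq C(K)\bigl(1 + \norm{u_\delta(t)}_{E^{s+1}(\Tbb^n)}\bigr),
}
together with analogous bounds for $f$ and $h$.

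Next, fix a bootstrap parameter $M$ to be chosen, and set $T^* := \sup\{t \in [0, T_\delta) : \norm{u_\delta(\tau)}_{E^{s+1}(\Tbb^n)} \leq M \text{ for all } \tau \in [0,t]\}$. On $[0, T^*]$, choosing $\delta_0 \leq 1/(3 c_L C(K)(1+M))$ guarantees that the smallness hypothesis $\ep R c_L \leq 1/3$ of Theorem \ref{linthm} holds uniformly in $t$, with $\ep = \delta$ playing the role of the perturbation parameter and $R \leq C(K)(1+M)$. Tracing through the proof of that theorem, the elliptic estimate \eqref{uvest3.1} combined with the hyperbolic estimate \eqref{usest2.1}, but with the coefficient of $\norm{u_\delta(\tau)}_{E^{s+1}}$ inside the Gronwall integral now estimated using only the $L^\infty$ bounds above (rather than via the Sobolev embedding $\Ec^s \hookrightarrow L^\infty$), yields a Gronwall-type inequality of the form
\eqn{gronwallform}{
\norm{u_\delta(t)}_{E^{s+1}(\Tbb^n)} \leq C_1(K)\bigl[\norm{u_\delta(0)}_{E^{s+1}(\Tbb^n)} + C_2\bigl(K, \norm{U(0)}_{E^{s+1}(\Rbb^n)}, T_\delta\bigr)\bigr] e^{C_3(K) T_\delta} =: \Psi
}
valid on $[0, T^*]$. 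Taking $M := 2\Psi$ and $\delta_0$ as above then closes the bootstrap, so that $T^* = T_\delta$ and the bound $\norm{u_\delta(t)}_{E^{s+1}(\Tbb^n)} \leq \Psi$ holds on all of $[0, T_\delta)$.

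The main obstacle is ensuring that the coefficient of $\norm{u_\delta(\tau)}_{E^{s+1}(\Tbb^n)}$ inside the Gronwall integral depends only on $K$, and not on any high Sobolev norm of $u_\delta$. This requires a careful accounting of the products arising when $\del{t}^k$ is commuted through the nonlinear coefficients: in every such Moser product, the factor bearing the top Sobolev derivative must be paired with an $L^\infty$ factor controlled by $K$ via \eqref{Linfbnd}, while the $L^\infty$ norms of $b^{\mu\nu}(\xv, u_\delta)$ and its time derivative are themselves bounded by $C(K)$ directly rather than through $\Ec^s$. The explicit factor $\delta$ multiplying $b^{\mu\nu}$ in \eqref{contev.1} is what allows the smallness condition of Theorem \ref{linthm} to be absorbed into the bootstrap despite the potentially large growth of $R$. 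Once \eqref{gronwallform} is secured, the extension argument described at the outset completes the proof.
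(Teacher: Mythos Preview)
Your plan is essentially correct and shares the paper's core insight: the Gronwall coefficient must depend only on $K$, which requires Moser-type estimates that are \emph{linear} in the high Sobolev norm with constants controlled by the $L^\infty$ data alone. In the paper these are Propositions \ref{STpropA}--\ref{STpropC}; note that Proposition \ref{fpropB}, which you cite, has constants depending on the full $\Ec^s$ norm and would not suffice.

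The paper's proof differs from yours in one structural respect. Rather than invoking Theorem \ref{linthm} and running a bootstrap to secure its smallness hypothesis $\delta c_L R \leq 1/3$ (which, since $R \leq C(K)(1+M)$ in the nonlinear setting, forces your $\delta_0$ to depend on the bootstrap bound $M$ and hence on $T_\delta$ and the data), the paper rederives the elliptic estimate directly. Applying Proposition \ref{STpropC} to the $k$-times time-differentiated equation yields
\[
\norm{\del{t}^k u_\delta}_{\Hc^{2,s+1-k}} \leq c\bigl(\norm{\del{t}^{k+2}u_\delta}_{\Hc^{0,s-k-1}} + \norm{\del{t}^k\mu}_{\Hc^{0,s-k-1}}\bigr) + \delta C(K)\bigl(1 + \norm{u_\delta}_{\Hc^{2,s+1}} + \norm{\del{t}u_\delta}_{\Hc^{2,s}}\bigr),
\]
so the coupling among the $\del{t}^k u_\delta$ is encoded by a finite matrix $M_\delta$ whose perturbation entries are of size $\delta C(K)$. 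This matrix is invertible for $\delta \leq \delta_0(K)$, with $\delta_0$ depending only on $K$ --- no bootstrap is needed. The hyperbolic step for $\del{t}^s u_\delta$ is then exactly as you describe (via Propositions \ref{STpropA}, \ref{STpropB} and Theorem \ref{weakthm}), and Gronwall closes. Your route should also work, but you must be careful not to use the final estimate of Theorem \ref{linthm} as a black box: its $\beta(\tau)$ factors in the exponential would reintroduce $M$-dependence and defeat the bootstrap, so you are in any case forced to re-derive the commutator estimates with $C(K)$ constants, at which point the argument is essentially the paper's.
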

\begin{proof}
Before proceeding with the proof, we will, in order to simplify calculations, suppress the explicit $\xv$-dependence of the functions $b^{\mu\nu}$,
$f$ and $h$.  Since $u_\delta$ satisfies
\leqn{contprop2}{
\del{\nu}\bigl((m^{\mu\nu} + \delta b^{\mu\nu}(u_\delta))\del{\mu} u_\delta\bigr) - \psi u_\delta =
\delta(f(u_\delta\del{}u_\delta)+ \chi_{\Om}h(u_\delta\del{}u_\delta)) \quad \text{in $[0,T_\delta)\times \Tbb^n$,}
}
we see after differentiating $k$-times, where $0\leq k \leq s-1$, with respect to $t$ that $\del{t}^k u_\delta$ satisfies
\lalign{contprop3}{
(\Delta -\psi)\del{t}^k u_\delta = \del{t}^{k+2} u_\delta +\delta\bigl[-\del{t}^{k+1}&\bigl(b^{00}(u_\delta)\del{t}u_\delta
+ b^{0i}(u_\delta)\del{i}u_\delta \bigr)
-\del{i} \del{t}^k\bigl( b^{i0}(u_\delta)\del{t}u_\delta  \notag\\
&+
b^{ij}(u_\delta)\del{j}u_\delta \bigr) + \del{t}^kf(u_\delta,\del{}u_\delta) + \chi_\Om \del{t}^k h(u,\del{}u) \bigr] + \del{t}^k \mu.
\label{contprop3.1}
}
Since
\eqn{contprop3a}{
\norm{\del{t}^k u_\delta}_{\Hc^{2,s+1-k}(\Tbb^n)} \lesssim \norm{(\Delta-\psi)\del{t}^k u_\delta}_{\Hc^{0,s-k-1}(\Tbb^n)} \quad 0\leq k \leq s-1
}
by Proposition \ref{potprop}, it follows from Proposition \ref{STpropC} and \eqref{contprop3.1} that $\del{t}^k u_\delta$
satisfies
\lalign{contprop4}{
\norm{\del{t}^k u(t)}_{\Hc^{2,s+1-k}(\Tbb^n)} \leq c\bigl(&\norm{\del{t}^{k+2}u}_{\Hc^{0,s-k-1}(\Tbb^n)}+
\norm{\del{t}^k \mu(t)}_{\Hc^{0,s-k-1}(\Tbb^n)} \bigr)  \notag \\
& + \delta C(K)
\bigl(1+ \norm{u(t)}_{\Hc^{2,s+1}(\Tbb^n)} + \norm{\del{t}u(t)}_{\Hc^{2,s}(\Tbb^n)} \bigr)\quad 0\leq t < T_\delta
\label{contprop4.1}
}
where
\leqn{contprop5}{
\norm{u_\delta}_{W^{1,\infty}((0,T_\delta)\times\Tbb^n)} \leq K \qquad 0 < \delta \leq 1.
}
We collect the estimates \eqref{contprop4.1}, for $0\leq k \leq s-1$, into the single matrix inequality
\leqn{contprop6}{
|M_\delta X_\delta(t)| \lesssim \norm{\del{t}^s u_\delta(t)}_{E(\Tbb^n)} + \norm{\mu(t)}_{\Ec^{s-1}(\Tbb^n)}  + \delta C(K)
\quad 0\leq t < T_\delta,
}
where
\eqn{contprop7}{
M_\delta = \begin{pmatrix}1-\delta C(K) & -\delta C(K)  &    0 &   c &    0      & 0 & & \cdots     &  &0 \\
                -\delta C(K)            & 1-\delta C(K) &    0 &   0 &    c      & 0 & &           &  &0 \\
                -\delta C(K)            & -\delta C(K)  &    1 &   0 &    0      & c&  &           &  &  \\
                -\delta C(K)            & -\delta C(K)  &    0 &   1 &    0      & 0&  & \ddots    &  &0 \\
                                        &    \vdots     &      &     &   \ddots  &  &  &           &  &c  \\
                                        &               &      &     &           &  &  &           &  &0 \\
                                        &               &      &     &           &  &  &           &  & \\
                -\delta C(K)            & -\delta C(K)  &    0 &  0  &   0       &  0 &  &    \cdots &  &1
                \end{pmatrix},
}
and
\eqn{contprop8}{
X_\delta(t) = \bigl(\norm{u_\delta(t)}_{\Hc^{2,s+1}(\Tbb^n)}, \norm{\del{t}u_\delta(t)}_{\Hc^{2,s}(\Tbb^n)}, \ldots ,
\norm{\del{t}^{s-1}u_\delta(t)}_{\Hc^{2,2}(\Tbb^n)} \bigr)^T.
}
Since $M_0$ is tri-diagonal, it follows that $M_0$ is invertible, and hence, that there exists a $\delta_0 \in (0,1]$ such
that $M_\delta$ is invertible with a uniformly bounded inverse for all $\delta \in (0,\delta_0]$. This fact together with \eqref{contprop6}
shows that $X_\delta(t)$ satisfies
\leqn{contprop9}{
|X_\delta(t)| \lesssim  \norm{\del{t}^s u_\delta(t)}_{E(\Tbb^n)} + \norm{\mu(t)}_{\Ec^{s-1}(\Tbb^n)}  + \delta C(K)
}
for all $(t,\delta) \in [0,T_\delta)\times (1,\delta_0]$.

Next, differentiating \eqref{contprop2} $s$-times with respect to $t$, we see that $\del{t}^s u_\delta$ is a weak
solution of
\eqn{contprop10a}{
\del{\nu}\bigl((m^{\mu\nu} + \delta b^{\mu\nu}(u_\delta))\del{\mu} \del{t}^s u_\delta\bigr) - \psi \del{t}^s u_\delta =
-\delta \del{\mu}\bigl([\del{t}^s,b^{\mu\nu}(u_\delta)\del{\nu}]u_\delta\bigr) +
\delta\bigl( \del{t}^s f(u_\delta\del{}u_\delta)+ \chi_{\Om} \del{t}^s h(u_\delta\del{}u_\delta)\bigr)
}
in $[0,T_\delta)\times \Tbb^n$. Applying the estimates from Propositions \ref{STpropA} and \ref{STpropB}, and Theorem \ref{weakthm}, we
obtain, with the help of the bound \eqref{contprop5}, the following energy estimate for $\del{t}^s u_\delta$:
\leqn{contprop11}{
\norm{\del{t}^s u_\delta(t)}_{E(\Tbb^n)} \leq C(K)\left(\norm{\del{t}^s u_\delta(0)}_{E(\Tbb^n)} + \int_{0}^t \norm{u_\delta(\tau)}_{E^{s+1}(\Tbb^n)} + 1 + \norm{\mu(\tau)}_{\Ec^{s-1}(\Tbb^n)} \, d\tau\right)
}
for $t\in (0,T_\delta)$. By assumption, $u_\delta$ and $\mu$ satisfy the bounds
\leqn{contprop10}{
\norm{u_\delta(0)}_{E^{s+1}(\Tbb^n)} \lesssim \norm{U(0)}_{E^{s+1}(\Rbb^n)} \AND \norm{\mu(t)}_{\Ec^{s-1}} \leq (1+t^{s-1})C\bigl(\norm{U(0)}_{E^{s+1}(\Rbb^n)}\bigr)
}
for $\delta \in (0,1]$. Combining these bounds with the estimates \eqref{contprop9} and \eqref{contprop11}, we get that
\eqn{contprop12}{
\norm{u_\delta(t)}_{E^{s+1}(\Tbb^n)} \leq C\bigl(K,T_\delta,\norm{U(0)}_{E^{s+1}(\Rbb^n)}\bigr)\left(1 + \int_{0}^t \norm{u_\delta(\tau)}_{E^{s+1}(\Tbb^n)}\, d\tau\right)
}
for $t\in (0,T_\delta)$, and hence, by Gronwall's inequality, that
\eqn{contprop13}{
\norm{u_\delta(t)}_{E^{s+1}(\Tbb^n)} \leq C\bigl(K,T_\delta,\norm{U(0)}_{E^{s+1}(\Rbb^n)}\bigr) \qquad 0\leq t < T_\delta.
}
In particular, this implies that
\eqn{contprop14}{
\limsup_{t\nearrow T_\delta}\norm{u_\delta(t)}_{E^{s+1}(\Tbb^n)} < \infty
}
for each $\delta \in (0,\delta_0]$.
From this point, we can follow standard arguments, for example, see the proof of Theorem 2.2, p. 46 of \cite{Majda:1984} , to conclude that for each $\delta \in (0,\delta_0]$, there exists
a $T^*_\delta > 0$ such that the solution $u_\delta$ extends to a solution on $[0,T^*_\delta)\times \Tbb^n$.
\end{proof}
\sect{disc}{Discussion and outlook}

As discussed in the introduction, the main application that we have in mind for the results presented in this article is to establish
the local existence and uniqueness of solutions to the Einstein equations coupled to elastic matter that describe the motion
of self-gravitating compact elastic bodies. While the complete details of the local existence and uniqueness proof
will be presented in a separate article \cite{Andersson_et_al:2013}, we give here the main ideas of the proof in order
to illustrate the role that the results of this article play in the proof.

Following \cite{BeigSchmidt:2003}, a single compact relativistic elastic body\footnote{The extension to non-colliding multiple interacting bodies
is straightforward.}, locally in time, is characterized by a map
\eqn{fmap}{
f \: : \: W \longrightarrow \Omega
}
from a space-time cylinder $W \cong [0,T]\times \Omega$ to a 3-dimensional
compact manifold $\Omega$ with boundary, known as the \emph{material manifold}. The body world tube
$W$ is taken  to be contained in an ambient Lorentzian spacetime $(M,g)$, where $M\cong [0,T]\times \Sigma$ for
some 3-manifold $\Sigma$. For simplicity of presentation, we assume that both $\Omega$ and $M$ can each be covered
by a single coordinate chart given by $(X^I)$ $(I=1,2,3)$ and $(x^\lambda)$ $(\lambda=0,1,2,3)$, respectively. In these local coordinates, we
can express $f$ and $g$ as
\eqn{fg}{
X^I=f^I(x^\lambda) \AND g=g_{\mu\nu}(x^\lambda)dx^\mu dx^\nu.
}
The field equations satisfied by $\{f^I,g_{\mu\nu}\}$ are then given
by
\lalign{EinElasA}{
G^{\mu\nu} &= 2\kappa T^{\mu\nu} \text{\hspace{0.3cm} in $M$,} \label{EinElasA.1}  \\
\nabla_{\mu} T^{\mu\nu} &= 0 \text{\hspace{1.1cm} in $W$,} \label{EinElasA.2}
}
where $G^{\mu\nu}$ is the Einstein tensor and
\eqn{Tdef}{
T_{\mu\nu} = 2\frac{\del{}\rho}{\del{} g^{\mu\nu}} - \rho g_{\mu\nu}
}
is the stress-energy of the elastic body with
\eqn{rhodef}{
\rho = \rho(f,H) \qquad (H^{IJ} := g^{\mu\nu}\del{\mu}f^I\del{\nu}f^J )
}
defining the proper energy density of the elastic body. By definition, $\rho$ is non-zero inside $W$ and vanishes outside. Letting $\Gamma$ denote the space-like boundary of $W$, the elastic field must also satisfy the boundary conditions
\leqn{EinElasB}{
n^\mu T_{\mu\nu} = 0 \text{\hspace{0.3cm} in $\Gamma$,}
}
where here $n^\mu$ denotes the outward pointing unit normal to $\Gamma$. Initial conditions for \eqref{EinElasA.1}-\eqref{EinElasA.2} are given
by
\lalign{EinElasC}{
(g_{\mu\nu},\Lc_t g_{\mu\nu}) &= (g^0_{\mu\nu},g^1_{\mu\nu}) \text{\hspace{0.3cm} in $\Sigma$,} \label{EinElasC.1}  \\
(f^I,\Lc_t f^I) &= (f_0^I,f_1^I) \text{\hspace{0.6cm} in $\Sigma\cap W$,} \label{EinElasC.2}
}
where $\Sigma$ forms the ``bottom'' of the spacetime slab $M\cong [0,T]\times \Sigma$,
$\Sigma\cap W$ forms the bottom of the spacetime cylinder $W\cong [0,T]\times \Omega$, $t=t^\mu\del{\mu}$ is a future pointing
time-like vector field tangent to $\Gamma$, and the initial data satisfies the \emph{constraint equations}
\leqn{EinElasD}{
t_{\mu} G^{\mu\nu} = 2\kappa t_{\mu} T^{\mu\nu} \text{\hspace{0.3cm} in $\Sigma$.}
}

The method we use to solve the initial value boundary problem (IVBP), given by \eqref{EinElasA.1}-\eqref{EinElasD}, begins with introducing harmonic coordinates as this allows us to
replace the full Einstein equations \eqref{EinElasA.1} with the \emph{reduced} equations
given by
\leqn{EinElasE}{
R_{\mu\nu} - \nabla_{(\mu}\xi_{\nu)} = 2\kappa\bigl( T_{\mu\nu}-\Third T g_{\mu\nu}\bigr) \qquad (\xi^\gamma := g^{\mu\nu}\Gamma_{\mu\nu}^\gamma).
}
For this method to work, we must choose the initial data so that the constraint
\leqn{xidata}{
\xi^\mu = 0 \quad \text{in $\Sigma$}
}
is also satisfied in addition to \eqref{EinElasD}.

The next step is to introduce the material representation via the map
\eqn{phiA}{
x^i = \phi^i(X^0,X^I) \qquad (i=1,2,3),
}
which is uniquely determined by the requirement
\eqn{phiB}{
f^I(X^0,\phi(X^0,X^I)) = X^I \quad \forall \; (X^0,X^I)\in [0,T]\times \Omega.
}
In the material representation, the elastic field is completely characterized by the map $\phi$ while the gravitational field is determined by
the components of the metric expressed in the material representation as follows
\eqn{gammadef}{
\gamma_{\mu\nu}(X^0,X^I) = g_{\mu\nu}(X^0,\phi(X^0,X^I)).
}
A straightforward calculation then shows that the field equations \eqref{EinElasA.1}-\eqref{EinElasA.2}, the boundary
conditions \eqref{EinElasB} and the initial conditions \eqref{EinElasC.1}-\eqref{EinElasC.2}, when expressed
in terms of the variables $\{\gamma_{\mu\nu},\phi^i\}$, take the form
\lalign{EinElasF}{
\frac{\del{}\;\;}{\del{}X^\Delta}\left(a^{\Delta\Lambda}\bigl(\gamma,\del{}\phit\bigr)\frac{\del{}\gamma_{\mu\nu}}{\del{}X^\Lambda}\right)& =
 q_{\mu\nu}\bigl(\gamma,\del{}\gamma,\del{}\phit\bigr) + \chi_{\Omega}p_{\mu\nu}\bigl(\gamma,\phi,\del{}\phi\bigr)
 &&\text{in $[0,T]\times \tilde{\Sigma} $,} \label{EinElasF.1} \\
\frac{\del{}\;\;}{\del{}X^\Delta}\left(F^{\Delta}_i \bigl(\gamma,\phi,\del{}\phi\bigr)\right)
&= w_i(\gamma,\phi,\del{}\phi)  &&\text{in $[0,T]\times \Omega $,} \label{EinElasF.2} \\
\nu_J F^{J}_i \bigl(\gamma,\phi,\del{}\phi\bigr)
&= 0  &&\text{in $[0,T]\times \del{}\Omega $,} \label{EinElasF.3} \\
\left(\gamma_{\mu\nu}, \frac{\del{}\gamma_{\mu\nu}}{\del{}X^0}  \right) &= (\gamma^0_{\mu\nu},\gamma^1_{\mu\nu}) &&\text{in
$\{0\}\times \tilde{\Sigma}$,} \label{EinElasF.4}  \\
\left(\phi^i, \frac{\del{}\phi^i}{\del{}X^0}\right) &= (\phi_0^i,\phi_1^i) &&\text{in $\{0\} \times \Omega$,} \label{EinElasF.5}
}
where $\Omega \subset \tilde{\Sigma}$ with $\tilde{\Sigma}$ defined by
$\phit(0,\tilde{\Sigma}) = \Sigma$, $\phit=E(\phi)$ with $E$ a suitable extension operator from $\Omega$ to $\tilde{\Sigma}$, $\nu_J$ is the outward pointing unit normal to $\del{}\Omega$
and
\eqn{EEdeldef}{
\del{}(\cdot) = \frac{\del{}(\cdot)}{\del{}X^\Delta} \qquad (\Delta=0,1,2,3)
}
is the spacetime gradient. From the point of view of local existence, we lose
nothing by assuming that $\tilde{\Sigma} \cong \Rbb^3$ and the $(X^I)$ are Cartesian coordinates on $\tilde{\Sigma}$.

\begin{rem} \label{discremA}
The dependence of the coefficients $a^{\Delta\Lambda}$ in
\eqref{EinElasF.1} on $\del{}\phi$ is problematic from a regularity perspective for the hyperbolic estimate of the top time derivative $\left(\frac{\del{}\;\;}{\del{}X^0}\right)^s \!\gamma_{\mu\nu}$ from
the proof of Theorem \ref{linthm}. This is because if we were to estimate the top $X^0$-derivative using the wave equation
\eqref{EinElasF.1} as in the proof of Theorem \ref{linthm}, we would require an estimate on the $(s+1)^{\text{th}}$ $X^0$-derivative of $\del{}\phi$,
 and this is one too many derivatives to be compatible with Koch's \cite{Koch:1993} estimates for equation \eqref{EinElasF.2}. To avoid this
 loss of derivatives scenario, we
instead use a first order formulation of the gravitational field equations based on
the variables $\{\gamma,\lambda\}$, where
\eqn{discremA1}{
\lambda_{\sigma\mu\nu} := (\del{\sigma}g_{\mu\nu})(X^0,\phi(X^0,X^I)),
}
to estimate the top $X^0$-derivative. We note that the lower $X^0$-derivatives are still estimated using elliptic estimates based
on the wave formulation \eqref{EinElasF.1} as in the proof of Theorem \ref{linthm}.

A straightforward calculation shows that the reduced equations \eqref{EinElasE} can be expressed
in terms of the  $\{\gamma_{\mu\nu},\lambda_{\sigma\mu\nu}\}$ variables as a symmetric hyperbolic system of the form
\eqn{discremA2}{
b^{\alpha\beta\kappa}(\gamma,\del{}\phi)\frac{\del{}\lambda_{\beta\mu\nu}}{\del{}X^\kappa} =
f(\gamma,\lambda,\del{}\phi) + \chi_{\Omega}h(\gamma,\phi,\del{}\phi),
}
with the point being that, unlike \eqref{EinElasF.1}, after differentiating this equation $s$-times with respect to $X^0$, we
obtain an $L^2$ estimate for $\left(\frac{\del{}\;\;}{\del{}X^0}\right)^s\! \lambda_{\mu\nu}$ with the highest $X^0$-derivative of $\del{}\phi$
appearing in the estimate being the
$s^{\text{th}}$ one. Importantly, this $L^2$ estimate
is, with the help
the estimates on $\phi$ coming from \eqref{EinElasF.2}, enough to obtain an appropriate $L^2$ estimate for $\left(\frac{\del{}\;\;}{\del{}X^0}\right)^s\! \del{}\gamma_{\mu\nu}$ thereby
avoiding any loss of derivatives.
\end{rem}

To proceed, we assume that the initial data \eqref{EinElasC.1}-\eqref{EinElasC.2} satisfy the constraint equations
\eqref{EinElasD}, \eqref{xidata} and also the \emph{compatibility conditions}
\lalign{EinElasG}{
\gamma^\ell_{\mu\nu} := \left(\frac{\del{}\;\;}{\del{}X^0}\right)^\ell\Bigl|_{X^0=0} \! \gamma_{\mu\nu} &\in \Hc^{m_{s+1-\ell},s+1-\ell}(\tilde{\Sigma})
\quad \ell=0,1,\ldots,s+1, \label{EinElasG.1} \\
\phi_\ell^i := \left(\frac{\del{}\;\;}{\del{}X^0}\right)^\ell\Bigl|_{X^0=0} \! \phi^i & \in H^{s+1-\ell}(\Omega) \quad \ell=0,1,\ldots,s+1
\label{EinElasG.2}
\intertext{and}
\left(\frac{\del{}\;\;}{\del{}X^0}\right)^\ell \!\bigl(\nu_J F^J_i(\gamma,\del{}\phi)\bigr)\Bigr|_{X^0=0} &\in H^{s-\ell}(\Omega)\cap H^1_0(\Omega),
  \label{EinElasG.3}}
where $s \in \Zbb_{> 5/2}$,
\eqn{mEEdef}{
m_j = \begin{cases} 2 & \text{if $j\geq 2$} \\
j & \text{otherwise} \end{cases}
}
and
\eqn{Hcdef}{
\Hc^{k,r}(\tilde{\Sigma}) = H^{r}(\Omega)\cap H^{k}(\tilde{\Sigma})\cap H^{r}(\tilde{\Sigma}\setminus \Omega)
}
We know from the results of \cite{Andersson_et_al:2013} that the set of initial data satisfying
the constraint equations and the compatibility conditions is non-empty. However, a complete classification of the space of initial
data satisfying these conditions appears to be very difficult, and in fact, the classification of the space of
initial data satisfying just the constraint equations \eqref{EinElasD} is far from complete.

Rather than solving the elastic boundary value problem \eqref{EinElasF.2}-\eqref{EinElasF.3} directly, we follow
\cite{Koch:1993} and differentiate it once with respect to $X^0$ to obtain the system
\lalign{EinElasH}{
\frac{\del{}\;\;}{\del{}X^\Delta}\left(L^{\Delta\Lambda}_{ij} \bigl(\gamma,\del{}\phi\bigr)\frac{\del{}\psi^j}{\del{}X^\Lambda}
+ Z^{\Delta\mu\nu}_i(\gamma,\del{}\phi)\frac{\del{}\gamma_{\mu\nu}}{\del{}X^0}\right)
&= Y_i\left(\gamma,\frac{\del{}\gamma_{\mu\nu}}{\del{}X^0},\phi,\del{}\phi,\del{}\psi\right), && \text{in $[0,T]\times \Omega $,} \label{EinElasH.1} \\
\del{0}\phi^i &= \psi^i && \text{in $[0,T]\times \Omega $,} \label{EinElasH.2} \\
\nu_J \left(L^{J\Lambda}_{ij} \bigl(\gamma,\del{}\phi\bigr)\frac{\del{}\psi^j}{\del{}X^\Lambda}
+ Z^{J\mu\nu}_i(\gamma,\del{}\phi)\frac{\del{}\gamma_{\mu\nu}}{\del{}X^0}\right)
&= 0 && \text{ in $[0,T]\times \del{}\Omega $,} \label{EinElasH.3}
}
where
\leqn{Ldef}{
L^{\Delta\Lambda}_{ij}(\gamma,\del{}\phi) = \frac{\del{}F^\Delta_i}{\del{}\frac{\del{}\phi^j}{\del{}X^\Lambda}}(\gamma,\del{}\phi)
}
is the elasticity tensor, as expressed in the material frame. In particular, we restrict ourself to elastic materials
for which the elasticity tensor \eqref{Ldef} satisfies Koch's coercivity condition\footnote{We note that this condition rules out perfect fluids.} \cite[Assumption 3, p. 12]{Koch:1993}. We note that Koch's other assumptions,
Assumptions 1,2 and 4 in \cite[pp. 12-13]{Koch:1993}, are satisfied automatically by the elasticity tensor of reasonable relativistic materials.

In order to solve the IVBP defined by \eqref{EinElasF.1}, \eqref{EinElasF.4}-\eqref{EinElasF.5} and \eqref{EinElasH.1}-\eqref{EinElasH.3},
we employ an iteration scheme, analogous to the one used in Section \ref{eandu}, defined by the map
\leqn{EEJmapA}{
J_T(\mu_{\mu\nu},\alpha^i,\beta^i) = (\gamma_{\mu\nu},\phi^i,\psi^i),
}
which maps the triple
\eqn{EEJmapB}{
(\mu_{\mu\nu},\alpha^i,\beta^i) \in \Bc_R,
}
where
\alin{EEBcdef}{
\Bc_R:= &\Bigl\{(\gamma,\phi,\psi)\in CX^{s+1}_T(\Rbb^3)\times CY^{s+1}_T(\Omega) \times CY^{s}_T(\Omega) \: \Bigl| \:
\norm{(\gamma,\phi,\psi)}\leq R, \\
 &(\del{X^0}^\ell\gamma,\del{X^0}^\ell \phi)|_{X^0=0}=(\gamma^\ell,\phi_\ell) \; \; \ell=0,1,\ldots,s+1 \quad \& \quad
 \del{X^0}^\ell \psi|_{X^0=0} = \phi_{\ell+1} \; \; \ell=0,1,\ldots s \Bigr\}
}
to a solution
\eqn{EEJmapC}{
(\gamma_{\mu\nu},\phi^i,\psi^i) \in CX^{s+1}_T(\Rbb^3)\times CY^{s+1}_T(\Omega) \times CY^{s}_T(\Omega)
}
of the IVBP
\lalign{EinElasK}{
\frac{\del{}\;\;}{\del{}X^\Delta}\left(a^{\Delta\Lambda}\bigl(\lambda,\betat,D\alphat\bigr)\frac{\del{}\gamma_{\mu\nu}}{\del{}X^\Lambda}\right)
- q_{\mu\nu}\bigl(\lambda,\del{}\lambda,\betat,D\alphat\bigr)  \hspace{1.5cm} & \notag \\
  - \chi_{\Omega}p_{\mu\nu}\bigl(\lambda,\beta,D\alpha\bigr) &= 0
 &&\text{in $[0,T]\times \tilde{\Sigma} $,} \label{EinElasK.1} \\
\frac{\del{}\;\;}{\del{}X^\Delta}\left(L^{\Delta\Lambda}_{ij} \bigl(\lambda,\beta,D\alpha\bigr)\frac{\del{}\psi^j}{\del{}X^\Lambda}
+ Z^{\Delta\mu\nu}_i(\lambda,\beta,D\alpha)\frac{\del{}\lambda_{\mu\nu}}{\del{}X^0}\right)  \hspace{1.5cm} & \notag \\
 -Y_i\left(\gamma,\frac{\del{}\gamma_{\mu\nu}}{\del{}X^0},\phi,\del{}\phi,\del{}\psi\right)
&= 0  &&\text{in $[0,T]\times \Omega $,} \label{EinElasK.2}\\
\del{0}\phi^i &= \beta^i  &&\text{in $[0,T]\times \Omega $,} \label{EinElasK.3} \\
\nu_J \left(L^{J\Lambda}_{ij} \bigl(\lambda,\beta,D\alpha\bigr)\frac{\del{}\psi^j}{\del{}X^\Lambda}
+ Z^{J\mu\nu}_i(\lambda,\beta,D\alpha)\frac{\del{}\lambda_{\mu\nu}}{\del{}X^0}\right)
&= 0 &&\text{in $[0,T]\times \del{}\Omega $,} \label{EinElasK.4} \\
\left(\gamma_{\mu\nu}, \frac{\del{}\gamma_{\mu\nu}}{\del{}X^0}  \right) &= (\gamma^0_{\mu\nu},\gamma^1_{\mu\nu}) &&\text{in
$\{0\}\times \tilde{\Sigma}$,} \label{EinElasK.5}  \\
\phi^i &= \phi_0^i &&\text{in $\{0\} \times \Omega$,} \label{EinElasF.6}\\
\left(\psi^i, \frac{\del{}\psi^i}{\del{}X^0}\right) &= (\phi_1^i,\phi_2^i) &&\text{in $\{0\} \times \Omega$,} \label{EinElasF.7}
}
where we are using
\eqn{EDdef}{
D(\cdot) = \frac{\del{}(\cdot)}{\del{}X^I}
}
to denote the spatial gradient.

The mapping property
\eqn{JTMp}{
J_T\: :\: \Bc_R \longrightarrow CX^{s+1}_T(\Rbb^3)\times CY^{s+1}_T(\Omega) \times CY^{s-1}_T(\Omega)
}
is a consequence of the linear estimates contained in Theorem \ref{linGthm} of this article and Theorem 2.4 of  \cite{Koch:1993}.
Furthermore, it follows from these estimates and the calculus inequalities of this article and those of \cite{Koch:1993} that
$J_T$ satisfies
\eqn{JTBR}{
J_T(\Bc_R) \subset \Bc_R
}
for $T>0$ small enough. This establishes boundedness in a high norm. Mimicking the arguments used in Section \ref{contract}
of this article and those in Section 3 of \cite{Koch:1993}, it can be shown that $J_T$, shrinking $T>0$ if necessary, defines a contraction in a suitable
low norm, and this, in turn, yields the existence of a unique solution
\eqn{EEsolAa}{
(\gamma_{\mu\nu},\phi^i,\psi^i) \in CX^{s+1}_T(\Rbb^3)\times CY^{s+1}_T(\Omega) \times CY^{s}_T(\Omega)
}
of the IVBP  \eqref{EinElasF.1}, \eqref{EinElasF.4}-\eqref{EinElasF.5} and \eqref{EinElasH.1}-\eqref{EinElasH.3}. It is
then a simple consequence of the above definitions that the pair
\eqn{EEsolA}{
(\gamma_{\mu\nu},\phi^i) \in CX^{s+1}_T(\Rbb^3)\times CY^{s+1}_T(\Omega)
}
is the unique solution to the IVBP \eqref{EinElasF.1}-\eqref{EinElasF.5}. Inverting the transformation used to define the material
representation, it is not difficult to verify that this solution yields a (unique) solution $(g_{\mu\nu},f^I)$ to the reduced IVBP
\eqref{EinElasA.2}-\eqref{EinElasE}.

The final step is to show that the vector field $\xi^\mu$ vanishes so that the solution $(g_{\mu\nu},f^I)$ also satisfies the full
Einstein equations \eqref{EinElasA.1}. This is accomplished by realizing that the boundary condition \eqref{EinElasB}
together with the elasticity field equations \eqref{EinElasA.2} imply that the stress energy tensor $T^{\mu\nu}$ satisfies
\eqn{Tdiv}{
\nabla_\mu T^{\mu\nu} = 0 \quad \text{in $M$,}
}
in the distributional sense. This is enough to conclude from the reduced equations \eqref{EinElasE}, with the help of
the contracted Bianchi identity, that $\xi^\mu$  weakly solves a linear wave equation
of the form
\eqn{XwaveA}{
\nabla_\mu \nabla^\mu \xi^\nu + C^\nu_\mu \xi^\mu = 0 \quad \text{in $M$.}
}
Moreover, it is a consequence of the constraint equations \eqref{EinElasD} and \eqref{xidata} that
\eqn{XwaveB}{
(\xi^\mu,\Lc_t \xi^\mu) = 0 \quad \text{in $\Sigma$}.
}
By uniqueness of weak solutions to linear wave equations, it follows that
\eqn{XwaveC}{
\xi^\mu = 0 \quad \text{in $M$,}
}
completing our local existence and uniqueness argument.

\bigskip

\noindent \emph{\textbf{Acknowledgments}}

\smallskip

This work was partially supported by the ARC grants DP1094582 and FT1210045.
Part of this work was completed during a visit of the author T.A.O. 
to the Albert Einstein Institute. We are grateful to the Institute for
its support and hospitality during these visits. We also thank B. Schmidt for many illuminating and productive discussions.
Finally, we thank the referee for their comments
and criticisms, which have served to improve the content
and exposition of this article.
\appendix

\sect{calculus}{Calculus inequalities}

In this appendix we state, for the convenience of the reader, some well known calculus inequalities for the standard Sobolev spaces $W^{s,p}(\Omega)$, and we derive a number of
related inequalities for the $H^{k,s}(\Gbb^n)$ spaces. In the following, $\Omega$ will always denote a bounded, open subset of $\Gbb^n$ with
a smooth boundary.

\subsect{Sineq}{Spatial inequalities}

The proof of the following inequalities are well known and may be found, for example, in
the books \cite{AdamsFournier:2003}, \cite{Friedman:1976} and \cite{TaylorIII:1996}. Alternatively, one can
also consult Appendix A of Koch's thesis \cite{Koch:1990} for detailed proofs.

\begin{thm}{\emph{[H\"{o}lder's inequality]}} \label{Holder}
If $0< p,q,r \leq \infty$ satisfy $1/p+1/q = 1/r$, then
\eqn{HolderA}{
\norm{uv}_{L^r(\Omega)} \leq \norm{u}_{L^p(\Omega)}\norm{v}_{L^q(\Omega)}
}
for all $u\in L^p(\Omega)$ and $v\in L^q(\Omega)$.
\end{thm}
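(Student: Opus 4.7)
The plan is to reduce the general statement $1/p + 1/q = 1/r$ to the classical case $r = 1$ by an exponent rescaling, and then prove that classical case via Young's inequality. First I would dispose of the degenerate cases: if one of the three exponents equals $\infty$, then the constraint $1/p+1/q=1/r$ forces the remaining two to coincide (or both to be $\infty$) and the inequality follows immediately from $\norm{uv}_{L^p(\Omega)} \leq \norm{u}_{L^\infty(\Omega)} \norm{v}_{L^p(\Omega)}$, which is obvious pointwise. Similarly, if $\norm{u}_{L^p(\Omega)} = 0$ or $\norm{v}_{L^q(\Omega)} = 0$ there is nothing to prove, so we may assume all quantities are finite and strictly positive.

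For the core case, suppose $r < \infty$ and set $\tilde{p} = p/r$, $\tilde{q} = q/r$, so that $1/\tilde{p} + 1/\tilde{q} = 1$ with $\tilde{p}, \tilde{q} \geq 1$. Then
\eqn{holderplan}{
\norm{uv}_{L^r(\Omega)}^r = \int_\Omega |u|^r |v|^r \, dx,
}
and it suffices to show the $r=1$ version applied to $|u|^r \in L^{\tilde p}(\Omega)$ and $|v|^r \in L^{\tilde q}(\Omega)$ yields the claimed bound. Thus the problem reduces to proving: if $1/\tilde p + 1/\tilde q = 1$ and $\phi \in L^{\tilde p}(\Omega)$, $\psi \in L^{\tilde q}(\Omega)$, then $\int_\Omega |\phi \psi| \, dx \leq \norm{\phi}_{L^{\tilde p}(\Omega)} \norm{\psi}_{L^{\tilde q}(\Omega)}$.

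For this, the key tool is Young's inequality, $ab \leq a^{\tilde p}/\tilde p + b^{\tilde q}/\tilde q$ for $a,b \geq 0$, which follows from the concavity of $\log$ (or, equivalently, convexity of the exponential): writing $a b = \exp\!\bigl(\tfrac{1}{\tilde p}\log a^{\tilde p} + \tfrac{1}{\tilde q} \log b^{\tilde q}\bigr)$ and applying Jensen's inequality gives the bound. Normalizing so that $\norm{\phi}_{L^{\tilde p}(\Omega)} = \norm{\psi}_{L^{\tilde q}(\Omega)} = 1$ (by scaling), Young's inequality applied pointwise to $a = |\phi(x)|$, $b = |\psi(x)|$ and integrated over $\Omega$ gives
\eqn{holderfinal}{
\int_\Omega |\phi \psi| \, dx \leq \frac{1}{\tilde p} \int_\Omega |\phi|^{\tilde p} \, dx + \frac{1}{\tilde q} \int_\Omega |\psi|^{\tilde q} \, dx = \frac{1}{\tilde p} + \frac{1}{\tilde q} = 1,
}
and undoing the normalization yields the $r=1$ case. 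Back-substituting gives the general inequality.

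There is no substantial obstacle here; the only subtlety worth noting is keeping track of the measurability of $|uv|$ (immediate, since $u$ and $v$ are measurable) and justifying that the exponents $\tilde p, \tilde q \geq 1$ in the reduction step, which is equivalent to $p, q \geq r$, and this follows directly from $1/p + 1/q = 1/r$ with $p, q, r > 0$.
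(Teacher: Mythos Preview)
Your argument is correct and is the standard textbook proof of H\"older's inequality via Young's inequality. The paper does not actually supply a proof of this statement: it is listed in Appendix~A among a collection of well-known calculus inequalities whose proofs are simply delegated to standard references (Adams--Fournier, Friedman, Taylor). So there is nothing to compare against; your write-up is a perfectly acceptable self-contained justification.
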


\begin{thm}{\emph{[Sobolev's inequality]}} \label{Sobolev} Suppose $s\in \Zbb_{\geq 1}$ and
$1\leq p < \infty$.
\begin{itemize}
\item[(i)] If $sp<n$, then
\eqn{Sobolev1}{
\norm{u}_{L^q(\Omega)} \lesssim \norm{u}_{W^{s,p}(\Omega)} \qquad p\leq q \leq np/(n-s p)
}
for all $u\in W^{s,p}(\Omega)$.
\item[(ii)] (Morrey's inequality) If $sp > n$, then
\eqn{Sobolev3}{
\norm{u}_{C^{0,\mu}(\overline{\Omega})} \lesssim \norm{u}_{W^{s,p}(\Omega)} \qquad 0 < \mu \leq \min\{1,s-n/p\}
}
for all $u\in W^{s,p}(\Omega)$.
\end{itemize}
\end{thm}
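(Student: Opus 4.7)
The plan is to reduce first from the bounded smooth domain $\Omega$ to all of $\mathbb{R}^n$ by invoking a bounded linear extension operator $E \: :\: W^{s,p}(\Omega)\to W^{s,p}(\mathbb{R}^n)$, which is available because $\partial\Omega$ is smooth. Since $\norm{u}_{L^q(\Omega)}\le \norm{Eu}_{L^q(\mathbb{R}^n)}$ and $\norm{u}_{C^{0,\mu}(\overline{\Omega})}\le \norm{Eu}_{C^{0,\mu}(\mathbb{R}^n)}$, both inequalities for $\Omega$ follow from their $\mathbb{R}^n$ counterparts. By density of $C_c^\infty(\mathbb{R}^n)$ in $W^{s,p}(\mathbb{R}^n)$, it is enough to prove the estimates for smooth compactly supported $u$.

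For part (i), I would argue by induction on $s$. The base case $s=1$, $p=1$ is the Gagliardo--Nirenberg--Sobolev inequality $\norm{u}_{L^{n/(n-1)}(\mathbb{R}^n)}\lesssim \norm{Du}_{L^1(\mathbb{R}^n)}$, proved by bounding $|u(x)|^{n/(n-1)}$ from above by a product of $n$ line integrals of $|\del{i}u|$, one in each coordinate direction, and then applying the generalized H\"older inequality iteratively to integrate out the variables one at a time. To upgrade to general $1\le p<n$ at level $s=1$, apply the base case to $v=|u|^\gamma$ with $\gamma=p(n-1)/(n-p)$, estimate $|Dv|\le \gamma|u|^{\gamma-1}|Du|$, and use H\"older on the right to absorb the $|u|^{\gamma-1}$ factor; this yields $\norm{u}_{L^{p^*}}\lesssim \norm{Du}_{L^p}$ with $p^*=np/(n-p)$. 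Iterating through the chain $W^{s,p}\hookrightarrow W^{s-1,p_1}\hookrightarrow\cdots\hookrightarrow L^{p_s}$ drives the exponent up to the desired $p_s=np/(n-sp)$, and the intermediate range $p\le q<np/(n-sp)$ follows by H\"older interpolation between $L^p$ and $L^{p_s}$.

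For part (ii), the key ingredient is Morrey's pointwise potential estimate: for $u\in C^1$, any ball $B_r(x)$, and $y\in B_r(x)$,
\eqn{morreypot}{
|u(y)-u_{B_r(x)}| \lesssim \int_{B_r(x)} \frac{|Du(z)|}{|y-z|^{n-1}}\, dz,
}
obtained by integrating $\nabla u$ along rays from $y$ and averaging. For $s=1$ and $p>n$, H\"older applied to this representation gives $|u(y)-u_{B_r(x)}|\lesssim r^{1-n/p}\norm{Du}_{L^p}$, because the Riesz kernel $|\cdot|^{-(n-1)}$ is integrable on $B_r$ in the exponent $p/(p-1)$ precisely when $p>n$. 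A standard telescoping argument with $r\sim|x-y|$ converts this into $\mu$-H\"older continuity with $\mu=1-n/p$, while comparing $u(x)$ to its average on a unit ball bounds $\norm{u}_{L^\infty}$ by $\norm{u}_{W^{1,p}}$. For general $s\ge 1$ with $sp>n$, I would either iterate part (i) to reduce to a level where the remaining differentiation order puts us in the Morrey regime, or apply the same potential-representation argument directly to higher derivatives; in either case the scaling of the integrand produces the exponent $\mu=\min\{1,s-n/p\}$.

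The main technical point to pin down is the inductive bookkeeping in part (i): one must check that the chain of sub-critical embeddings produces exactly the exponent $np/(n-sp)$ at the end, and separately handle the possibility that some intermediate $p_j$ first crosses the threshold $p_j\ge n$, at which point one must switch branches to the Morrey estimate. The other delicate point is that the extension operator must be bounded on $W^{s,p}$ uniformly across the relevant range of $s$, which is where the smoothness of $\partial\Omega$ gets used. Since this is all classical, I would in practice simply cite \cite{AdamsFournier:2003} or the appendix of \cite{Koch:1990}, as the authors themselves suggest.
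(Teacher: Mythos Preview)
The paper does not actually prove this theorem: the sentence preceding the statement says explicitly that the proofs are well known and refers the reader to \cite{AdamsFournier:2003}, \cite{Friedman:1976}, \cite{TaylorIII:1996}, and Appendix~A of \cite{Koch:1990}. Your sketch is the standard Gagliardo--Nirenberg--Sobolev/Morrey argument (extension to $\mathbb{R}^n$, the $|u|^\gamma$ trick for the endpoint, and the Riesz potential estimate for H\"older continuity), and it is correct; in the end you yourself suggest citing the same references, which is exactly what the paper does.
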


\begin{thm}{\emph{[Interpolation]}} \label{interp}
Suppose $\ep_0 >0$, $1\leq p \leq \infty$, $k,s\in \Zbb_{\geq 0}$ and $k\leq s$. Then there exists
a constant $K>0$ such that
\eqn{thm}{
|u|_{k,p} \leq K\bigl(\ep |u|_{s,p} + \ep^{-k/(s-k)}\norm{u}_{L^p(\Omega)}\bigr)
}
for $0<\ep \leq \ep_0$, where $|\cdot|_{k,p}$ is the seminorm defined by
\eqn{interpB}{
|u|_{k,p} = \left(\sum_{|\alpha|=k}\norm{D^\alpha u}_{L^p(\Omega)}^p\right)^{1/p}.
}
\end{thm}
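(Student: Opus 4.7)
The plan is to establish the inequality in three stages: first prove a one-dimensional base case on a bounded interval; next extend to $\Rbb^n$ by applying the 1D estimate coordinatewise and iterating in $s - k$; finally transfer to $\Omega$ using a total extension operator. As a conceptual shortcut, I would aim to first obtain the \emph{unweighted} (Gagliardo--Nirenberg type) multiplicative form
\eqn{GN}{
|u|_{k,p} \leq C \norm{u}_{L^p(\Omega)}^{1-k/s} |u|_{s,p}^{k/s} \quad (0\leq k \leq s),
}
from which the stated $\ep$-weighted inequality follows at once: applying Young's inequality with conjugate exponents $s/k$ and $s/(s-k)$ to $A = \ep |u|_{s,p}$ and $B = \ep^{-k/(s-k)} \norm{u}_{L^p(\Omega)}$ yields $A^{k/s}B^{(s-k)/s} \leq (k/s)A + ((s-k)/s)B$, which combined with \eqref{GN} is precisely the desired bound with $K$ independent of $\ep \in (0,\ep_0]$.

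For the one-dimensional base case, I would prove that for $u \in W^{2,p}([0,h])$ one has
\eqn{1Dbase}{
\norm{u'}_{L^p([0,h])} \leq K_0\bigl( h \norm{u''}_{L^p([0,h])} + h^{-1}\norm{u}_{L^p([0,h])}\bigr),
}
obtained by writing $u'(x) = [u(b)-u(a)]/(b-a) - \int_x^b u''(t)\,dt$ for any subinterval $[a,b] \subset [0,h]$ containing $x$, averaging over $a,b$, and applying H\"older. This can be viewed as the $k=1, s=2$ case with the $\ep$ weighting encoded in the free parameter $h$. For the multivariate setting on $\Rbb^n$, I would apply \eqref{1Dbase} along each coordinate line to a $(k-1)$-st order derivative, use Fubini to reassemble an $L^p(\Rbb^n)$ bound, and iterate on the jump $s - k$, tracking how the single-step weights $h$ compose.

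To transfer the resulting $\Rbb^n$ inequality to the bounded smooth domain $\Omega$, I would invoke a total extension operator $E \colon W^{s,p}(\Omega) \to W^{s,p}(\Rbb^n)$ bounded on $W^{\ell,p}$ for all $0 \leq \ell \leq s$, which is available since $\partial \Omega$ is smooth (see Adams--Fournier, Theorem 5.24). Applying the $\Rbb^n$ inequality to $Eu$ and restricting to $\Omega$ produces the bound on $\Omega$ with a constant enlarged only by the operator norms of $E$, which are absorbed into the final $K$.

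The main technical obstacle is bookkeeping the composition of $\ep$-weights during the iteration in the second stage: each single-step application of \eqref{1Dbase} contributes a factor, and one must choose the scale parameters along the iteration so that the final negative power of $\ep$ is exactly $k/(s-k)$ rather than something larger. A clean way to handle this is to prove the Gagliardo--Nirenberg form \eqref{GN} directly by induction on $s$, since in that format the exponents $k/s$ and $1 - k/s$ compose transparently under Young's inequality; the $\ep$-weighted statement then falls out as described above without any delicate bookkeeping. Apart from this, all ingredients (density of $C^\infty_c$ in $W^{s,p}(\Rbb^n)$, integration by parts, and the extension operator on smooth domains) are entirely standard.
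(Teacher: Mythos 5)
There is a genuine gap, and it sits exactly at the point you call a ``conceptual shortcut.'' The multiplicative form
$|u|_{k,p} \leq C \norm{u}_{L^p(\Omega)}^{1-k/s}\,|u|_{s,p}^{k/s}$
is \emph{false} on a bounded domain when the right-hand side carries the seminorm $|u|_{s,p}$: take $u(x)=x_1^k$ (e.g.\ $u=x_1$ for $k=1$, $s=2$), for which $|u|_{s,p}=0$ while $|u|_{k,p}>0$. This is precisely why the theorem is stated with the restriction $0<\ep\leq\ep_0$ — if the multiplicative form held on $\Omega$, the additive inequality would follow for \emph{all} $\ep>0$, which the polynomial example rules out. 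The multiplicative seminorm inequality is true on $\Rbb^n$, but your transfer step does not repair the defect: a total extension operator satisfies $\norm{Eu}_{W^{\ell,p}(\Rbb^n)}\lesssim\norm{u}_{W^{\ell,p}(\Omega)}$, so it controls $|Eu|_{s,p}$ only through the \emph{full} norm $\norm{u}_{W^{s,p}(\Omega)}$, never through $|u|_{s,p}$ alone (the same polynomial shows no extension can do better). Applying the $\Rbb^n$ inequality to $Eu$ therefore yields
$|u|_{k,p}\leq C\bigl(\ep\norm{u}_{W^{s,p}(\Omega)}+\ep^{-k/(s-k)}\norm{u}_{L^p(\Omega)}\bigr)$,
which is weaker than the stated theorem; to upgrade it you must absorb the intermediate seminorms hidden in $\norm{u}_{W^{s,p}(\Omega)}$ (apply the estimate at every order, sum, take $\ep$ small enough to absorb, then rescale $\ep$), an argument your proposal does not supply and which is where the threshold $\ep_0$ genuinely enters.

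Your one-dimensional base case and the Young's-inequality reduction from a multiplicative to an additive form are both fine as far as they go; the failure is confined to the bounded-domain stage. For comparison, the paper does not prove this theorem at all — it cites Adams--Fournier, Friedman, Taylor, and Koch's thesis — and the standard proofs there (e.g.\ Adams--Fournier, Theorem 5.2) work intrinsically on $\Omega$ using the cone property: one proves the $\ep$-weighted one-dimensional estimate on segments or small cubes/cones contained in $\Omega$ and sums over a covering, so only $D^s u$ and $u$ ever appear on the right and no extension or multiplicative inequality is needed. Either adopt that intrinsic route, or keep your extension route but add the absorption-of-intermediate-norms lemma explicitly; as written, the chain of reductions does not establish the inequality in the stated seminorm form.
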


\begin{thm}{\emph{[Multiplication inequality]}} \label{calcpropB}
Suppose $1\leq p <\infty$, $s_1,s_2,\ldots s_{\ell+1}\in \Zbb$, $s_1,s_2,\ldots,s_\ell \geq s_{\ell+1}\geq 0$, and $\sum_{j=1}^\ell s_j -n/p > s_{\ell+1}$. Then
\eqn{calcpropB.1}{
\norm{u_1 u_2 \cdots u_\ell}_{W^{p,s_{\ell+1}}(\Omega)} \lesssim \norm{u_1}_{W^{p,s_1}(\Omega)} \norm{u_2}_{W^{p,s_2}(\Omega)} \cdots   \norm{u_\ell}_{W^{p,s_\ell}(\Omega)}
}
for all $u_i \in W^{p,s_i}(\Omega)$ $i=1,2,\ldots,\ell$.
\end{thm}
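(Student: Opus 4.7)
The plan is to prove this multiplication inequality by induction on the number of factors $\ell$, with the bilinear case $\ell = 2$ serving as the base and the general case reduced to it by peeling off one factor at a time.

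For the base case $\ell = 2$ (setting $s := s_3$), I would bound $\norm{uv}_{W^{s,p}(\Omega)}$ by unpacking the definition as a sum of $\norm{D^\alpha(uv)}_{L^p(\Omega)}$ over $|\alpha| \leq s$, applying the Leibniz rule
\begin{equation*}
D^\alpha(uv) = \sum_{\beta \leq \alpha}\binom{\alpha}{\beta} D^\beta u \, D^{\alpha-\beta} v,
\end{equation*}
and estimating each summand in $L^p(\Omega)$ by H\"{o}lder's inequality (Theorem \ref{Holder}) with conjugate exponents $q_1, q_2$ satisfying $1/q_1 + 1/q_2 = 1/p$, followed by Sobolev's inequality (Theorem \ref{Sobolev}) on each factor. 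The exponents are chosen by case analysis on $|\beta|$ and $|\gamma| := |\alpha| - |\beta|$: if $s_1 - |\beta| > n/p$, I would invoke the Morrey embedding $W^{s_1 - |\beta|, p}\hookrightarrow L^\infty$ and take $q_1 = \infty$; otherwise I would take $q_1$ just above $np/(n - (s_1 - |\beta|)p)$ from the sub-critical Sobolev embedding. Analogous choices govern $D^\gamma v$. Summing the resulting Sobolev requirements shows the H\"{o}lder identity $1/q_1 + 1/q_2 = 1/p$ is attainable precisely when $s_1 + s_2 - |\alpha| \geq n/p$, which follows from the hypothesis $s_1 + s_2 - n/p > s \geq |\alpha|$ with strict slack to accommodate the sharp Sobolev embedding borderline.

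For the inductive step, assuming the result is known for fewer than $\ell$ factors, I would reorder the factors so that $s_\ell$ carries the maximum regularity and write $u_1 \cdots u_\ell = (u_1 \cdots u_{\ell-1}) \cdot u_\ell$. Applying the bilinear case to the right-hand side with intermediate integer regularity $t$ for the $(\ell-1)$-fold product reduces the claim to finding $t$ satisfying simultaneously (a) $s_{\ell+1} \leq t \leq \min_{j \leq \ell-1} s_j$, (b) the bilinear slack condition $t + s_\ell - n/p > s_{\ell+1}$, and (c) the inductive slack condition $\sum_{j=1}^{\ell-1} s_j - n/p > t$. A short case analysis based on whether $s_\ell > n/p$ (in which case (b) is automatic once (a) holds) or $s_\ell \leq n/p$ would then be used to exhibit such a $t$, invoking both the strict hypothesis $\sum s_j - n/p > s_{\ell+1}$ and the standing assumption $s_j \geq s_{\ell+1}$.

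The main obstacle is precisely the combinatorial argument in the inductive step: the total slack of $n/p$ afforded by the hypothesis must be apportioned carefully between the bilinear estimate and the recursive application to the $(\ell-1)$ remaining factors, while keeping $t$ an integer lying in a possibly narrow window. The strict inequality in the hypothesis provides the essential room, but one has to exploit the ordering $s_\ell \geq s_j$ for $j \leq \ell - 1$ and track how the inductive slack $\sum_{j=1}^{\ell-1} s_j - n/p - t$ depends on the choice of $t$. Once the bilinear case is in hand and the existence of a compatible $t$ is established, the induction closes in one line using the inductive hypothesis to bound $\norm{u_1 \cdots u_{\ell-1}}_{W^{t,p}(\Omega)}$ by $\prod_{j=1}^{\ell-1}\norm{u_j}_{W^{s_j,p}(\Omega)}$.
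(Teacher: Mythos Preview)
The paper does not actually prove this theorem: it is listed in Appendix~\ref{calculus} among the ``well known'' calculus inequalities, with the remark that proofs may be found in \cite{AdamsFournier:2003}, \cite{Friedman:1976}, \cite{TaylorIII:1996}, or Appendix~A of \cite{Koch:1990}. So there is no proof in the paper to compare against; the question is simply whether your argument is sound.

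Your bilinear base case is fine; the gap is in the inductive step. The conditions (b) and (c) you impose on the intermediate regularity $t$ each consume a full $n/p$ of slack, so together they demand
\[
\sum_{j=1}^{\ell} s_j - 2\,\frac{n}{p} > s_{\ell+1},
\]
which is strictly stronger than the hypothesis $\sum_{j=1}^{\ell} s_j - n/p > s_{\ell+1}$. Concretely, take $n=3$, $p=2$, $\ell=3$, $s_1=s_2=s_3=1$, $s_4=0$: the hypothesis holds ($3-\tfrac32=\tfrac32>0$), but (b) forces $t>\tfrac12$ while (c) forces $t<\tfrac12$, so no admissible $t$ exists, integer or not. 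Your case split on whether $s_\ell>n/p$ does not rescue this, since here every $s_j\le n/p$.

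The standard remedy---and what the cited references do---is to abandon the peel-off induction and argue directly for all $\ell$ factors at once: expand $D^\alpha(u_1\cdots u_\ell)$ by the multi-factor Leibniz rule, apply the $\ell$-fold H\"older inequality $\tfrac1p=\sum\tfrac1{q_j}$, and choose each $q_j$ via the Sobolev embedding $W^{s_j-|\beta_j|,p}\hookrightarrow L^{q_j}$. The single slack $\sum s_j - n/p > s_{\ell+1}$ is then apportioned among all $\ell$ factors simultaneously rather than being spent twice. In the example above this is just $H^1(\Rbb^3)\hookrightarrow L^6$ and $\tfrac16+\tfrac16+\tfrac16=\tfrac12$.
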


\begin{thm}{\emph{[Gagliardo-Nirenberg's inequality]}} \label{GNMa}
If $1\leq p,q,r\leq \infty$, $s\in \Zbb_{\geq 1}$ and $|\alpha|\leq s$, then
\eqn{GNMa1}{
\norm{D^\alpha u}_{L^r(\Omega)} \lesssim \norm{u}^{1-|\alpha|/s}_{L^q(\Omega)}
\norm{u}_{W^{s,p}(\Omega)}^{|\alpha|/s}
}
for all $u\in L^q(\Omega) \cap W^{s,p}(\Omega)$, where
\eqn{GNM2}{
\frac{s-|\alpha|}{sq} + \frac{|\alpha|}{sp} = \frac{1}{r}.
}
In particular
\eqn{GNMa2}{
\norm{D^\alpha u}_{L^{\frac{sp}{|\alpha|}}(\Omega)} \lesssim \norm{u}_{L^\infty(\Omega)}^{1-\frac{|\alpha|}{s}}
\norm{u}^{\frac{|\alpha|}{s}}_{W^{s,p}(\Omega)}.
}
\end{thm}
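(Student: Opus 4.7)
\smallskip

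The plan is to reduce the inequality from a bounded smooth domain $\Omega$ to the whole space $\Rbb^n$ and then prove it there by an iterative integration-by-parts argument.

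First I would reduce to the case $\Omega = \Rbb^n$. Because $\Omega$ is a bounded open subset of $\Gbb^n$ with smooth boundary, a Stein-type total extension operator $E\colon W^{s,p}(\Omega)\cap L^q(\Omega)\to W^{s,p}(\Rbb^n)\cap L^q(\Rbb^n)$ exists and is simultaneously bounded between the corresponding $W^{k,p}$ and $L^q$ spaces with operator norms depending only on $\Omega$, $n$, $s$, $p$ and $q$. Since $\norm{D^\alpha u}_{L^r(\Omega)}\leq \norm{D^\alpha Eu}_{L^r(\Rbb^n)}$ while $\norm{Eu}_{L^q(\Rbb^n)}\lesssim\norm{u}_{L^q(\Omega)}$ and $\norm{Eu}_{W^{s,p}(\Rbb^n)}\lesssim\norm{u}_{W^{s,p}(\Omega)}$, it suffices to prove the inequality on $\Rbb^n$, where by density I may assume $u\in C^\infty_c(\Rbb^n)$.

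On $\Rbb^n$ the endpoints $|\alpha|=0$ and $|\alpha|=s$ are trivial: the scaling identity forces $r=q$ in the first case (giving $\norm{u}_{L^r}\leq\norm{u}_{L^q}$) and $r=p$ in the second (where the estimate is immediate from $u\in W^{s,p}$). For $0<|\alpha|<s$, the core estimate I would establish is the one-derivative bound
\eqn{gnkey}{
\norm{Du}_{L^{2p}(\Rbb^n)}^{2}\lesssim \norm{u}_{L^\infty(\Rbb^n)}\,\norm{D^{2}u}_{L^p(\Rbb^n)}\qquad (1\leq p<\infty),
}
proved by writing
\eqn{gnibp}{
\int_{\Rbb^n}|Du|^{2p}\,dx=\sum_i\int_{\Rbb^n}(\del{i}u)|Du|^{2p-2}(\del{i}u)\,dx=-\sum_i\int_{\Rbb^n}u\,\del{i}\!\bigl((\del{i}u)|Du|^{2p-2}\bigr)\,dx,
}
bounding $|u|$ pointwise by $\norm{u}_{L^\infty}$, and applying H\"older's inequality to the remaining factor $|D^{2}u|\,|Du|^{2p-2}$ with exponents $p$ and $p/(p-1)$. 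Iterating this estimate on successive derivatives, and using H\"older to pass between Lebesgue exponents at each step, gives the estimate
\eqn{gnloginf}{
\norm{D^\alpha u}_{L^{sp/|\alpha|}(\Rbb^n)}\lesssim \norm{u}_{L^\infty(\Rbb^n)}^{1-|\alpha|/s}\norm{u}_{W^{s,p}(\Rbb^n)}^{|\alpha|/s},
}
which is the special case with $q=\infty$.

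To upgrade to finite $q$ and the general exponent $r$ satisfying $\tfrac{s-|\alpha|}{sq}+\tfrac{|\alpha|}{sp}=\tfrac1r$, I would apply the $q=\infty$ inequality \eqref{gnloginf} after pairing it with a Sobolev embedding or an $L^q$--$L^\infty$ H\"older interpolation of the intermediate powers of $u$. Concretely, proceeding by induction on $s$, the $L^r$ norm of $D^\alpha u$ is controlled by a product of an $L^q$ factor on $u$ and an $L^{sp/|\alpha|}$ factor on derivatives, to which \eqref{gnkey} and the inductive hypothesis apply. The scaling relation between $p,q,r,s,|\alpha|$ is precisely what makes the H\"older exponents add up correctly at every step. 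The main obstacle will be the careful bookkeeping of exponents in this induction, particularly the degenerate endpoint cases $p=\infty$ or $r=\infty$, which must be handled separately by combining Morrey's inequality (Theorem \ref{Sobolev}(ii)) with interpolation in $L^\infty$.
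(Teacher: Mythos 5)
The paper does not actually prove Theorem \ref{GNMa}: it is quoted as a standard calculus inequality with the proof delegated to Adams--Fournier, Friedman, Taylor, and Appendix A of Koch's thesis, so there is no in-paper argument to compare with. Your sketch follows the classical Nirenberg route (extension to $\Rbb^n$, integration by parts plus H\"older, induction on the order of differentiation), which is precisely the strategy of those references; the reduction by a Stein extension operator is legitimate because the right-hand side carries the full $W^{s,p}(\Omega)$ norm, and your key computation for $\norm{Du}_{L^{2p}}^2\lesssim\norm{u}_{L^\infty}\norm{D^2u}_{L^p}$ is sound up to the routine regularization of $|Du|^{2p-2}$ at zeros of $Du$ when $2p-2<1$.

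The one step that would fail as written is the passage from $q=\infty$ to general finite $q$. You cannot obtain the stated inequality by ``pairing'' the $L^\infty$ version with a Sobolev embedding or with $L^q$--$L^\infty$ H\"older interpolation: controlling $\norm{u}_{L^\infty(\Omega)}$ by $\norm{u}_{L^q(\Omega)}$ costs derivatives (Theorem \ref{Sobolev} spends $n/q$ of them), and any such expenditure destroys the exact balance $\frac{s-|\alpha|}{sq}+\frac{|\alpha|}{sp}=\frac1r$, which carries no dimensional gain. The standard remedy is to prove the one-derivative lemma directly with the $L^q$ norm: the same integration by parts gives $\int|Du|^r\,dx\lesssim \norm{u}_{L^q}\norm{D^2u}_{L^p}\norm{Du}_{L^r}^{r-2}$ once H\"older is applied with the three exponents $q$, $p$ and $r/(r-2)$, whose reciprocals sum to one precisely when $\frac2r=\frac1q+\frac1p$; this yields $\norm{Du}_{L^r}^2\lesssim\norm{u}_{L^q}\norm{D^2u}_{L^p}$, of which your estimate is the special case $q=\infty$, and the induction on $s$ then proceeds exactly as you outline (the range $r<2$ and the endpoints $p,q,r=\infty$ require the usual separate treatment, via regularization, one-dimensional estimates along lines, or Theorem \ref{interp}, as in the cited references). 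With that modification your argument becomes the textbook proof; as it stands, the derivation of the finite-$q$ case is a genuine gap.
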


\begin{thm}{\emph{[Moser's inequality]}}  \label{GNMb}
Suppose $s\in \Zbb_{\geq 1}$, $1\leq p \leq \infty$, $|\alpha|\leq s$, $f\in C^s(\Rbb)$, $f(0)=0$, $u\in C^0(\Omega)\cap L^\infty(\Omega)\cap W^{s,p}(\Omega)$, and
$u(x) \in V$ for all $x\in \Omega$ where $V$ is open and bounded in $\Rbb$.  Then
\eqn{frop1}{
\norm{D^\alpha f(u)}_{L^p(\Omega)} \leq C(\norm{f}_{C^s(\overline{V})})(1+\norm{u}^{s-1}_{L^\infty(\Omega)})\norm{u}_{W^{s,p}(\Omega)}.
}
\end{thm}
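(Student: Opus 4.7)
The plan is to reduce the estimate to the chain rule and then apply the Gagliardo--Nirenberg interpolation inequality (Theorem \ref{GNMa}). First, I would dispose of the trivial case $|\alpha|=0$: since $f(0)=0$, the fundamental theorem of calculus gives $f(u)=\bigl(\int_0^1 f'(tu)\,dt\bigr) u$, so
\eqn{mosertriv}{\norm{f(u)}_{L^p(\Omega)}\le \norm{f'}_{L^\infty(\overline{V})}\norm{u}_{L^p(\Omega)}\le \norm{f}_{C^s(\overline V)}\norm{u}_{W^{s,p}(\Omega)},}
which is majorized by the right-hand side of the claimed inequality.

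For $1\le k:=|\alpha|\le s$, I would expand $D^\alpha f(u)$ via Fa\`a di Bruno's formula, obtaining a finite sum of terms of the form $f^{(j)}(u)\prod_{i=1}^j D^{\beta_i}u$ with $1\le j\le k$ and $\beta_1+\cdots+\beta_j=\alpha$, $|\beta_i|\ge 1$. Because $u(\Omega)\subset V$ and $V$ is bounded, $\norm{f^{(j)}(u)}_{L^\infty(\Omega)}\le \norm{f}_{C^s(\overline V)}$ for every $0\le j\le s$, so it suffices to control the products $\prod D^{\beta_i}u$ in $L^p(\Omega)$. I would do this by H\"older with conjugate exponents $p_i=sp/|\beta_i|$ (so that $\sum_i 1/p_i=k/(sp)$, putting the product in $L^{sp/k}$), and on each factor I would invoke Theorem \ref{GNMa} with $q=\infty$ and $|\alpha|$ replaced by $|\beta_i|$:
\eqn{moserGN}{\norm{D^{\beta_i}u}_{L^{sp/|\beta_i|}(\Omega)}\lesssim \norm{u}_{L^\infty(\Omega)}^{1-|\beta_i|/s}\norm{u}_{W^{s,p}(\Omega)}^{|\beta_i|/s}.}
Multiplying these estimates and using $L^{sp/k}(\Omega)\hookrightarrow L^p(\Omega)$ (which holds for the bounded domain $\Omega$ since $k\le s$), I obtain
\eqn{moserprod}{\biggl\|\prod_{i=1}^j D^{\beta_i}u\biggr\|_{L^p(\Omega)}\lesssim \norm{u}_{L^\infty(\Omega)}^{\,j-k/s}\,\norm{u}_{W^{s,p}(\Omega)}^{\,k/s}.}

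Finally, I would consolidate the exponents to match the stated bound. Since $1\le j\le k\le s$, the exponent $j-k/s$ lies in $[0,s-1]$, and so $\norm{u}_{L^\infty}^{j-k/s}\le 1+\norm{u}_{L^\infty}^{s-1}$. To pass from $\norm{u}_{W^{s,p}}^{k/s}$ to $\norm{u}_{W^{s,p}}$, I would use Young's inequality together with the observation that on a bounded domain $\norm{u}_{L^p(\Omega)}\le|\Omega|^{1/p}\norm{u}_{L^\infty(\Omega)}$ is controlled by $\norm{u}_{W^{s,p}(\Omega)}$: splitting by whether $\norm{u}_{W^{s,p}(\Omega)}\ge\norm{u}_{L^\infty(\Omega)}$ or not, one sees that the product on the right of \eqref{moserprod} is always bounded by a constant multiple of $\bigl(1+\norm{u}_{L^\infty(\Omega)}^{s-1}\bigr)\norm{u}_{W^{s,p}(\Omega)}$. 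Summing over the finitely many Fa\`a di Bruno partitions then yields the claimed estimate.

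The only genuinely delicate step is this last consolidation, which is where the restriction $|\alpha|\le s$ is really used: one must reconcile the natural GN exponent $k/s\le 1$ on $\norm{u}_{W^{s,p}}$ with the linear power in the statement, and the geometric fact that $f^{(j)}(0)$ can be nonzero (for $j\ge 1$) without spoiling the small-$u$ behaviour of $D^\alpha f(u)$. All other ingredients are routine multi-index bookkeeping.
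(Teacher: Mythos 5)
The paper itself offers no proof of Theorem \ref{GNMb}; it is quoted from \cite{AdamsFournier:2003,Friedman:1976,TaylorIII:1996} and from Appendix A of Koch's thesis \cite{Koch:1990}, where the argument is the same Fa\`a di Bruno plus Gagliardo--Nirenberg scheme you outline. Your version, however, has a genuine gap in the final consolidation step --- precisely the step you flag as delicate. For a term with $|\alpha|=k<s$ you arrive at the bound $A^{\,j-k/s}B^{\,k/s}$, where $A=\norm{u}_{L^\infty(\Omega)}$ and $B=\norm{u}_{W^{s,p}(\Omega)}$, and you claim this is always $\lesssim (1+A^{s-1})B$. As an inequality in the two numbers $A,B$ this is false: it is equivalent to $A^{\,j-k/s}B^{\,k/s-1}\lesssim 1+A^{s-1}$, and since $k/s-1<0$ the left side blows up as $B\searrow 0$ with $A$ fixed (take $j=1$, $k/s=1/2$, $A=1$). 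Your rescues do not close it: the split on whether $B\geq A$ works only when $B\geq A$ (there $B^{k/s-1}\leq A^{k/s-1}$ and you get $A^{j-1}$), but in the opposite regime nothing bounds $B$ from below in terms of $A$ --- the theorem allows $sp<n$, where $W^{s,p}(\Omega)$ does not embed into $L^\infty(\Omega)$ and sharp bumps give $\norm{u}_{L^\infty}=1$ with $\norm{u}_{W^{s,p}}$ arbitrarily small; and Young's inequality applied to $A^{\,j-k/s}B^{\,k/s}$ leaves a pure power of $A$ carrying no factor of $B$, which cannot be absorbed into $(1+A^{s-1})B$ either.

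The repair is to normalize the interpolation at level $k=|\alpha|$ rather than at $s$: apply Theorem \ref{GNMa} with the integer parameter equal to $k$ and $q=\infty$, i.e. $\norm{D^{\beta_i}u}_{L^{kp/|\beta_i|}(\Omega)}\lesssim \norm{u}_{L^\infty(\Omega)}^{1-|\beta_i|/k}\norm{u}_{W^{k,p}(\Omega)}^{|\beta_i|/k}$, and use H\"older with exponents $kp/|\beta_i|$, which sum to $1/p$, so the product lands directly in $L^p(\Omega)$ with bound $\norm{u}_{L^\infty(\Omega)}^{\,j-1}\norm{u}_{W^{k,p}(\Omega)}$; since $j-1\leq s-1$ and $\norm{u}_{W^{k,p}(\Omega)}\leq\norm{u}_{W^{s,p}(\Omega)}$, this is exactly $C(1+\norm{u}_{L^\infty(\Omega)}^{s-1})\norm{u}_{W^{s,p}(\Omega)}$. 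Equivalently: your computation is correct verbatim in the top case $|\alpha|=s$ (there $B^{k/s}=B$ and the $L^\infty$ exponent is $j-1$), and the case $|\alpha|=k<s$ then follows by invoking that case with $s$ replaced by $k$ and using monotonicity of the norms. A minor further point: in the $|\alpha|=0$ reduction, and likewise whenever you evaluate $f'(tu)$, the segment $[0,u(x)]$ need not lie in $\overline V$ ($V$ is not assumed convex or to contain $0$), so you should enlarge $\overline V$ to a bounded set containing these segments, at the harmless cost of the constant depending on $\norm{f}_{C^s}$ of that larger set.
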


\subsect{STineq}{Spacetime inequalities}
We now prove spacetime versions of the multiplication and Moser inequalities adapted to the $\Hc^{0,s}(\Gbb^n)$, $\Xc_T^{s}(\Gbb^n)$ and $X_T^{s}(\Omega)$ spaces.

\begin{prop} \label{elemE}
Suppose $s_1,s_2,\ldots s_{\ell+1}\in \Zbb$, $s_1,s_2,\ldots,s_\ell \geq s_{\ell+1}\geq 0$, and $\sum_{j=1}^\ell s_j -n/2 > s_{\ell+1}$. Then
\eqn{elemE.1}{
\norm{u_1 u_2 \cdots u_\ell}_{\Hc^{0,s_{\ell+1}}(\Tbb^n)} \lesssim \norm{u_1}_{\Hc^{0,s_1}(\Tbb^n)} \norm{u_2}_{\Hc^{0,s_2}(\Tbb^n)} \cdots   \norm{u_\ell}_{\Hc^{0,s_\ell}(\Tbb^n)}
}
for all $u_i \in \Hc^{0,s_i}(\Tbb^n)$ $i=1,2,\ldots,\ell$.
\end{prop}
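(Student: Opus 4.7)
The plan is to unpack the norm on the left-hand side using the definition of $\Hc^{0,s_{\ell+1}}(\Tbb^n)$ and then reduce the inequality to three applications of the standard multiplication inequality (Theorem \ref{calcpropB}) on the bounded smooth domains that appear in the decomposition. Specifically, I would start from
\eqn{plan1}{
\norm{u_1 \cdots u_\ell}_{\Hc^{0,s_{\ell+1}}(\Tbb^n)}^2 = \norm{u_1 \cdots u_\ell}_{H^{s_{\ell+1}}(\Omega)}^2 + \norm{u_1 \cdots u_\ell}_{L^2(\Tbb^n)}^2 + \norm{u_1 \cdots u_\ell}_{H^{s_{\ell+1}}(\Omega^c)}^2,
}
and handle the three summands separately.

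For the first summand, since $\Omega$ is a bounded open subset of $\Tbb^n$ with smooth boundary and the hypotheses $s_1,\ldots,s_\ell \geq s_{\ell+1} \geq 0$ and $\sum_j s_j - n/2 > s_{\ell+1}$ match exactly the assumptions of Theorem \ref{calcpropB} with $p=2$, I get
\eqn{plan2}{
\norm{u_1 \cdots u_\ell}_{H^{s_{\ell+1}}(\Omega)} \lesssim \prod_{i=1}^\ell \norm{u_i}_{H^{s_i}(\Omega)} \leq \prod_{i=1}^\ell \norm{u_i}_{\Hc^{0,s_i}(\Tbb^n)},
}
where the final inequality is just the trivial bound $\norm{\cdot}_{H^{s_i}(\Omega)} \leq \norm{\cdot}_{\Hc^{0,s_i}(\Tbb^n)}$ coming from \eqref{HksnormT}. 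The third summand in \eqref{plan1} is estimated in exactly the same way on the bounded smooth open set $\Omega^c$.

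For the $L^2(\Tbb^n)$ summand, I split the integral across the common boundary $\partial\Omega$ (which has measure zero) to write
\eqn{plan3}{
\norm{u_1 \cdots u_\ell}_{L^2(\Tbb^n)}^2 = \norm{u_1 \cdots u_\ell}_{L^2(\Omega)}^2 + \norm{u_1 \cdots u_\ell}_{L^2(\Omega^c)}^2,
}
and then apply Theorem \ref{calcpropB} again on each of $\Omega$ and $\Omega^c$ but now with the output exponent set to $0$. The required condition $\sum_j s_j - n/2 > 0$ follows immediately from the given $\sum_j s_j - n/2 > s_{\ell+1} \geq 0$. This produces the same product bound as in \eqref{plan2}, and summing the three contributions finishes the proof.

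The main step is really just the bookkeeping check that the hypotheses transfer correctly to each of the three pieces, together with the fact that Theorem \ref{calcpropB} applies on both $\Omega$ and its complement inside $\Tbb^n$ (both are bounded open sets with smooth boundary, so the standard results quoted in Subsection \ref{Sineq} are available). There is no genuine obstacle: the inequality is essentially the statement that the $\Hc^{0,s}$-scale is a $\lq\lq$piecewise'' Sobolev scale on which the usual Moser/Schauder-type product estimates apply componentwise.
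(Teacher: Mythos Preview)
Your proof is correct and follows essentially the same approach as the paper. The only cosmetic difference is in handling the $L^2(\Tbb^n)$ summand: you invoke Theorem \ref{calcpropB} a second time with output exponent $0$, whereas the paper simply bounds $\norm{u_1\cdots u_\ell}_{L^2(\Omega)} \leq \norm{u_1\cdots u_\ell}_{H^{s_{\ell+1}}(\Omega)}$ (and likewise on $\Omega^c$) and recycles the estimate already obtained for the first and third summands.
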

\begin{proof}
By Theorem \ref{calcpropB}, we have that
\alin{elemE.3}{
\norm{u_1 u_2\cdots u_\ell}_{H^{s_{\ell+1}}(\Omega)}
\lesssim  \norm{u_1}_{H^{s_1}(\Omega)}\norm{u_2}_{H^{s_2}(\Omega)} \cdots \norm{u_\ell}_{H^{s_\ell}(\Omega)}
\intertext{and}
\norm{u_1 u_2\cdots u_\ell}_{H^{s_3}(\Omega^c)}
\lesssim  \norm{u_1}_{H^{s_1}(\Omega^c)}\norm{u_2}_{H^{s_2}(\Omega^c)} \cdots \norm{u_\ell}_{H^{s_\ell}(\Omega^c)}.
}
Moreover, it is obvious that
\eqn{elemE.5}{
\norm{u_1 u_2 \cdots u_\ell}^2_{L^2(\Tbb^n)} = \norm{u_1 u_2\cdots u_\ell}^2_{L^2(\Omega)} +  \norm{u_1 u_2\cdots u_\ell}^2_{L^2(\Omega^c)} \leq \norm{u_1 u_2 \cdots u_\ell}^2_{H^{s_{\ell+1}}(\Omega)} + \norm{u_1 u_2 \cdots u_\ell}^2_{H^{s_{\ell+1}}(\Omega^c)}.
}
The desired inequality
\eqn{elemE.6}{
\norm{u_1 u_2 \cdots u_\ell}_{\Hc^{0,s_{\ell+1}}(\Tbb^n)} \lesssim \norm{u_1}_{\Hc^{0,s_1}(\Tbb^n)} \norm{u_2}_{\Hc^{0,s_2}(\Tbb^n)} \cdots   \norm{u_\ell}_{\Hc^{0,s_\ell}(\Tbb^n)}
}
now follows directly from the above inequalities.
\end{proof}

The next four propositions are closely related to Lemma 3.2 and Theorem A.6 from \cite{Koch:1993}. Since proofs of
 Lemma 3.2 and Theorem A.6 are not provided in \cite{Koch:1993}, we, for the convenience of the reader, provide some of the details here.

\begin{prop} \label{fpropB} Suppose $s\in \Zbb_{> n/2}$, $f\in C^s(\Rbb)$, $f(0)=0$, $u\in \Xc_T^s(\Gbb^n)$ and
$v\in Y_T^s(\Omega)$ with $\Omega \subset \Gbb^n$. Then 
\eqn{propB1}{
\norm{\del{t}^\ell f(u)}_{\Hc^{0,s-\ell}(\Gbb^n)} \leq C(\norm{u}_{\Ec^s(\Gbb^n)}) \AND
\norm{\del{t}^\ell f(v)}_{H^{s-\ell}(\Omega)} \leq C(\norm{u}_{E^s(\Omega)}).
}
for $0\leq \ell \leq s$.
\end{prop}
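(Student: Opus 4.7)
The plan is to reduce both inequalities to an application of the Faà di Bruno chain rule together with the multiplication inequality (Proposition \ref{elemE}) and Moser's inequality (Theorem \ref{GNMb}). The second inequality, on the bounded domain $\Omega$, is the standard situation treated in \cite{Koch:1993}; the first requires extra care because $\Hc^{0,s-\ell}(\Gbb^n)$ combines $L^2$ on $\Gbb^n$ with $H^{s-\ell}$ on $\Omega$ and on $\Omega^c$, and because $\Omega^c$ is unbounded when $\Gbb^n=\Rbb^n$.

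First I would write, via Faà di Bruno,
\eqn{fb}{
\del{t}^\ell f(u) = \sum_{k=1}^{\ell}\sum_{\alpha\in\mathcal{I}_{k,\ell}} c_{k,\alpha}\, f^{(k)}(u)\prod_{i=1}^k \del{t}^{\alpha_i} u,
}
where $\mathcal{I}_{k,\ell}=\{\alpha\in\Zbb_{\geq 1}^k\mid \alpha_1+\cdots+\alpha_k=\ell\}$, with the convention that for $\ell=0$ the only term is $f(u)$. I would estimate each term of the sum in $H^{s-\ell}(\Omega)$ and in $H^{s-\ell}(\Omega^c)$ separately, then in $L^2(\Gbb^n)$, and finally combine. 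For the $H^{s-\ell}$ pieces, treat $f^{(k)}(u)$ and the $k$ factors $\del{t}^{\alpha_i}u$ as $k+1$ factors in Theorem \ref{calcpropB}: with assigned smoothness $(s, s-\alpha_1,\ldots,s-\alpha_k)$ the condition $s+\sum_i(s-\alpha_i)-n/2>s-\ell$ reduces to $ks>n/2$, which holds for all $k\geq 1$ because $s>n/2$, and each factor has index at least $s-\ell$. This gives, on $\Omega$ (and identically on $\Omega^c$),
\eqn{prodest}{
\Bnorm{f^{(k)}(u)\prod_{i=1}^k \del{t}^{\alpha_i}u}_{H^{s-\ell}(\Omega)}\lesssim \norm{f^{(k)}(u)}_{H^{s}(\Omega)}\prod_{i=1}^k \norm{\del{t}^{\alpha_i}u}_{H^{s-\alpha_i}(\Omega)}.
}
Each $\norm{\del{t}^{\alpha_i}u}_{H^{s-\alpha_i}(\Omega)}$ is controlled by $\norm{u}_{\Ec^{s}(\Gbb^n)}$ by the definition of the energy norm, and by Sobolev embedding $\norm{u(t)}_{L^\infty(\Omega)}\lesssim \norm{u(t)}_{H^{s}(\Omega)}\leq \norm{u}_{\Ec^s(\Gbb^n)}$ since $s>n/2$. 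Moser's inequality (Theorem \ref{GNMb}), applied to $g=f^{(k)}-f^{(k)}(0)$ (which vanishes at $0$) on the bounded domain $\Omega$, bounds $\norm{g(u)}_{H^s(\Omega)}$ by $C(\norm{u}_{L^\infty(\Omega)})\,\norm{u}_{H^s(\Omega)}$, while the constant part $f^{(k)}(0)$ has finite $H^s(\Omega)$ norm since $\Omega$ is bounded. The same Moser bound works on $\Omega^c$ for the seminorm $|f^{(k)}(u)|_{s,2}$ since all derivatives of $f^{(k)}(u)$ of order $\geq 1$ involve at least one derivative of $u$, hence lie in $L^2(\Omega^c)$.

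For the $L^2(\Gbb^n)$ part of the $\Hc^{0,s-\ell}$ norm, I handle the case $\ell\geq 1$ and the case $\ell=0$ separately. When $\ell\geq 1$, each term in \eqref{fb} contains at least one factor $\del{t}^{\alpha_i}u\in L^2(\Gbb^n)$; placing that factor in $L^2(\Gbb^n)$ and the remaining factors (including $f^{(k)}(u)$) in $L^\infty(\Gbb^n)$, Hölder together with Sobolev embedding on each of $\Omega$ and $\Omega^c$ gives an $L^2(\Gbb^n)$ bound of the form $C(\norm{u}_{\Ec^s(\Gbb^n)})$. For $\ell=0$, I use the hypothesis $f(0)=0$ to write $|f(u)|\leq C(\norm{u}_{L^\infty(\Gbb^n)})|u|$ pointwise, so $\norm{f(u)}_{L^2(\Gbb^n)}\leq C(\norm{u}_{L^\infty(\Gbb^n)})\norm{u}_{L^2(\Gbb^n)}\leq C(\norm{u}_{\Ec^s(\Gbb^n)})$. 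Summing over the finite index set in \eqref{fb} yields the first estimate.

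The second estimate is obtained by the same argument, except that everything takes place on the single bounded set $\Omega$, so only the bounded-domain version of Moser's inequality is needed and the $L^2(\Gbb^n)$ step is not required. The main (minor) obstacle is the book-keeping for the $L^2(\Rbb^n)$ contribution in the $\Gbb^n=\Rbb^n$ case, where one must use the $\ell\geq 1$ trick of parking one derivative-factor in $L^2$ and the remaining factors in $L^\infty$, rather than trying to invoke Moser globally on the unbounded set $\Omega^c$ where constants fail to be $L^2$.
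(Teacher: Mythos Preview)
Your approach is essentially the same as the paper's: expand $\partial_t^\ell f(u)$ via Fa\`a di Bruno, verify the index condition for the multiplication inequality (the paper uses Proposition~\ref{elemE} directly, which packages the $H^{s-\ell}(\Omega)$, $H^{s-\ell}(\Omega^c)$, and $L^2(\Gbb^n)$ pieces together, whereas you unpack these), and control the composition factor $f^{(k)}(u)$ by Moser's inequality together with the Sobolev embedding. The extra bookkeeping you do for the unbounded $\Omega^c$ case and the separate $L^2(\Gbb^n)$ step are not made explicit in the paper's proof (which is written for $\Tbb^n$), but they do not constitute a different argument; note also that once the $H^{s-\ell}(\Omega)$ and $H^{s-\ell}(\Omega^c)$ bounds are in hand, the $L^2(\Gbb^n)$ bound follows automatically, so your separate $L^2$ treatment is redundant, and your handling of the constant part $f^{(k)}(0)$ on $\Omega^c$ is most cleanly phrased by splitting $f^{(k)}(u)=f^{(k)}(0)+g(u)$ and applying the multiplication inequality to each piece rather than invoking a seminorm version of Theorem~\ref{calcpropB}.
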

\begin{proof}
We begin by differentiating $f(u)$ $\ell$-times $(0\leq \ell \leq s)$ with respect to $t$  to get
\leqn{fpropB3}{
\del{t}^\ell f(u) = \sum_{k_1+\cdots+k_m=\ell} f_{k_1,\ldots,k_m}(u) \del{t}^{k_1} u \cdots \del{t}^{k_m} u,
}
where $f_{k_1,\ldots,k_m} \in C^{s-\ell}(\Rbb)$. Noting that $s+ \sum_{j=1}^m (s-k_j) - n/2 = ms - \ell + s-n/2> s - \ell$, we
see that we can apply Proposition \ref{elemE} to \eqref{fpropB3} to get
\eqn{fpropB4}{
\norm{\del{t}^\ell f(u)}_{\Hc^{0,s-\ell}(\Tbb^n)} = \norm{f_{k_1,\ldots,k_m}(u)}_{\Hc^{0,s}(\Tbb^n)} \norm{\del{t}u^{k_1}}_{\Hc^{0,s-k_1}(\Tbb^n)} \cdots
 \norm{\del{t}^{k_m} u }_{\Hc^{0,s-k_m}(\Tbb^n)}.
}
Combining this estimates together with Theorems \ref{Sobolev} and \ref{GNMb}, we arrive at the
desired estimate
\eqn{fpropB5}{
\norm{\del{t}^\ell f(u)}_{\Hc^{0,s-\ell}(\Tbb^n)} \leq  C_\ell(\norm{u}_{\Ec^s(\Tbb^n)}).
}
The other estimate
\eqn{fpropB7}{
\norm{\del{t}^\ell f(v)}_{H^{s-\ell}(\Omega)} \leq C(\norm{v}_{E^s(\Omega)})
}
is proved in a similar manner.
\end{proof}

\begin{prop} \label{STpropA}
Suppose $s\in \Zbb_{\geq 1}$, $1\leq p \leq \infty$, $f\in C^s(\Rbb\times \Rbb^{n+1},\Rbb)$, $u\in W^{p,s+1}(\Omega)$,
$\del{t}u \in W^{p,s}(\Omega)$
and the higher time derivatives $\del{t}^\ell u$ $(\ell \geq 2)$ are obtained by formally differentiating
\eqn{STpropA2}{
\del{t}^2 u = a^{ij}(u,\del{}u)\del{i}\del{j}u + b^i(u,\del{}u)\del{i}\del{t}u + g(u,\del{}u) +h,
}
where $h\in W^{p,s+1}(\Omega)$, $\del{t}^\ell h =0$ and $a^{ij},b^j,g \in C^s(\Rbb\times \Rbb^{n+1},\Rbb)$.
Then $u$ satisfies the estimate
\eqn{STpropA3}{
\norm{\del{t}^j f(u,\del{} u)}_{W^{p,s-j}(\Omega)}
\leq C\bigl(\norm{u}_{W^{1,\infty}(\Omega)},\norm{\del{t}u}_{L^\infty(\Omega)}\bigr)
(1+\norm{u}_{W^{p,s+1}(\Omega)}+\norm{\del{t}u}_{W^{p,s}(\Omega)})
}
for $0\leq j \leq s$.
\end{prop}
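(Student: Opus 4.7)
The plan is to induct on $j$ by using the PDE to trade higher-order time derivatives of $u$ for mixed space-time derivatives of $u$ and $\del{t}u$, and then to apply the chain rule together with Moser's and the multiplication inequality to the resulting polynomial expression. Since the hypotheses place $u$ in $W^{p,s+1}(\Omega)$ and $\del{t}u$ in $W^{p,s}(\Omega)$, the goal is to arrange that after all substitutions no factor appearing in $D^\sigma \del{t}^j f(u,\del{}u)$ (with $|\sigma|\leq s-j$) carries more than $s+1$ spatial derivatives on a copy of $u$ or more than $s$ on a copy of $\del{t}u$.

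First I would establish by a secondary induction on $\ell$, for $2\leq \ell \leq s+1$, that the formal time derivatives generated from
\begin{equation*}
\del{t}^2 u = a^{ij}(u,\del{}u)\del{i}\del{j}u + b^i(u,\del{}u)\del{i}\del{t}u + g(u,\del{}u) + h
\end{equation*}
admit representations of the form
\begin{equation*}
\del{t}^\ell u = P_\ell\bigl(u,\del{}u;\, D^\alpha u\,(|\alpha|\leq \ell);\, D^\beta \del{t}u\,(|\beta|\leq \ell-1);\, D^\gamma h\,(|\gamma|\leq \ell-2)\bigr),
\end{equation*}
where each $P_\ell$ is a polynomial in its higher-derivative arguments with coefficients that are smooth functions of $(u,\del{}u)$ built from derivatives of $a^{ij}$, $b^i$, and $g$. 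The base case $\ell=2$ is the PDE itself, and the inductive step follows by applying $\del{t}$ to the formula for $\del{t}^\ell u$ and using the PDE together with $\del{t}^m h = 0$ for $m\geq 1$ to replace any newly-appearing factor $\del{t}^2 u$.

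Next I would apply the multivariate chain rule to $\del{t}^j f(u,\del{}u)$, obtaining a finite sum of products of the form $(\partial^\beta f)(u,\del{}u)\,\prod_k \del{t}^{m_k}u\,\prod_\ell \del{t}^{n_\ell}\del{i_\ell} u$ with $\sum_k m_k + \sum_\ell(n_\ell+1) = j+1$. Substituting the representations above eliminates every factor of time-derivative order at least $2$, yielding a finite sum of monomials whose factors are (i) a composite coefficient smooth in $(u,\del{}u)$, (ii) spatial derivatives $D^\alpha u$ with $|\alpha|\leq j+1$, (iii) mixed derivatives $D^\beta \del{t}u$ with $|\beta|\leq j$, and (iv) spatial derivatives $D^\gamma h$ with $|\gamma|\leq j-1$. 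For each multi-index $\sigma$ with $|\sigma|\leq s-j$ I would then apply $D^\sigma$ and the product rule, and bound the $L^p(\Omega)$ norm of each resulting monomial: the composite coefficient is controlled by Moser's inequality (Theorem \ref{GNMb}), while the product of higher-derivative factors is controlled by the multiplication inequality (Theorem \ref{calcpropB}) used in its tame form, where the single highest-order factor consumes the full $W^{p,s+1}$ or $W^{p,s}$ regularity and every other factor is measured in $L^\infty$ using the hypotheses on $\norm{u}_{W^{1,\infty}}$ and $\norm{\del{t}u}_{L^\infty}$; in the borderline regime where Theorem \ref{calcpropB} does not apply directly, Gagliardo--Nirenberg interpolation (Theorem \ref{GNMa}) supplies the needed estimate. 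Summing over terms yields the claimed bound, with the dependence on $\norm{h}_{W^{p,s-1}(\Omega)}$ absorbed into the constant since $h$ is fixed.

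The main technical obstacle is the derivative bookkeeping in the chain-rule expansion: each substitution of the PDE to eliminate a factor $\del{t}^2 u$ trades one $\del{t}^2$ for combinations of $D^2$ and $\del{t}D$, and one must verify that after all substitutions no factor of $u$ carries more than $s+1$ spatial derivatives and no factor of $\del{t}u$ more than $s$. A ``weight'' argument --- assigning weight one to each $\del{t}$ and to each $\del{i}$ and observing that each use of the PDE preserves total weight --- makes this accounting transparent and reduces the estimate to a routine verification via the calculus inequalities listed above.
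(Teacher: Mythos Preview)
Your overall architecture---expand $\del{t}^j f(u,\del{}u)$ via the chain rule, use the PDE to eliminate all $\del{t}^\ell u$ with $\ell\geq 2$, and then estimate the resulting polynomial---is sound and would succeed if carried out carefully. The weight bookkeeping you describe is correct. But the crucial estimate step is misdescribed in a way that would not survive a careful write-up. Theorem~\ref{calcpropB} is a Sobolev-type product rule requiring $\sum_j s_j - n/p > s_{\ell+1}$; it has no ``tame form'' and is not what is doing the work here. More importantly, after substitution the intermediate factors in a typical monomial look like $D^{\alpha}u$ with $|\alpha|\geq 2$ or $D^{\beta}\del{t}u$ with $|\beta|\geq 1$, and these are \emph{not} controlled in $L^\infty$ by $\norm{u}_{W^{1,\infty}}$ and $\norm{\del{t}u}_{L^\infty}$. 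Gagliardo--Nirenberg is therefore not a ``borderline'' patch but the central mechanism: one splits each monomial via H\"older with exponents $p s/\theta_i$ chosen so the $\theta_i$ record the derivative weights, then interpolates each factor between $L^\infty$ of the base quantity ($u$, $Du$, or $\del{t}u$) and the top Sobolev norm, so that the interpolation exponents sum to one and the result is linear in $\norm{u}_{W^{s+1,p}}+\norm{\del{t}u}_{W^{s,p}}$.

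The paper organizes this differently and more cleanly. It runs an induction on $j$ in which the induction hypothesis is the proposition itself, assumed for all smaller $j$ and simultaneously for \emph{all} integrability exponents $p$ and all $s$. One writes
\[
\del{t}^j f(u,\del{}u)=\del{t}^{j-1}\bigl(D_1 f\,\del{t}u\bigr)+\del{t}^{j-1}\bigl(D_2 f\cdot D\del{t}u\bigr)+\del{t}^{j-1}\bigl(D_3 f\,\del{t}^2 u\bigr),
\]
handles the first term directly by the hypothesis, and for the second and third expands the $t$-derivatives by Leibniz, applies H\"older with exponents $ps/(|\beta|+k)$ and $ps/(|\gamma|+j-k)$, invokes the induction hypothesis at those new $p$-values, and closes with Gagliardo--Nirenberg (the PDE is substituted for $\del{t}^2 u$ in the third term). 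The advantage of this scheme over yours is that the induction hypothesis packages exactly the intermediate estimate needed at each stage, so one never has to manage the full polynomial expansion and its combinatorics of exponents by hand.
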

\begin{proof}
First, we observe that estimate
\leqn{STpropA4}{
\norm{\del{t}^\ell f(u,Du,\del{t}u)}_{W^{p,s-\ell}(\Omega)}
\leq C\bigl(\norm{u}_{W^{1,\infty}(\Omega)},\norm{\del{t}u}_{L^\infty(\Omega)}\bigr)
(1+\norm{u}_{W^{p,s+1}(\Omega)}+\norm{\del{t}u}_{W^{s,p}(\Omega)})
}
holds for $\ell=0$, $1\leq p \leq \infty$ and $s\in \Zbb_{\geq 1}$ thanks to Theorem \ref{GNMb}. We now proceed by induction and assume that \eqref{STpropA4}
holds for $1\leq p \leq \infty$, $s\in \Zbb_{\geq 1}$,  $\ell=0,\ldots,\min\{s-1,j-1\}$, and maps $f\in C^s$, where the constant $C$
in \eqref{STpropA4} also depends implicitly
on the $C^s$ norm of $f$. In particular, this implies that
\leqn{STpropA5}{
\norm{\del{t}^{j-1}(D_1 f(u,Du,\del{t}u) \del{t}u)}_{W^{s-j+1,p}(\Omega)}
\leq C
(1+\norm{u}_{W^{s+1,p}(\Omega)}+\norm{\del{t}u}_{W^{s,p}(\Omega)}),
}
where
\leqn{STpropA6}{
C = C\bigl(\norm{u}_{W^{1,\infty}(\Omega)},\norm{\del{t}u}_{L^\infty(\Omega)}\bigr).
}

To proceed, we write the $j^\text{th}$ time derivative of $f(u,Du,\del{t}u)$ as
\leqn{STpropA6a}{
\del{t}^j\bigl[f(u,Du,\del{t}u)\bigr] =
\del{t}^{j-1}\bigl(D_1 f(u,Du,\del{t}u) \del{t} u\bigr) + \del{t}^{j-1}\bigl(D_2 f(u,Du,\del{t}u) \cdot \del{t}Du
\bigr) +
\del{t}^{j-1}\bigl(D_3 f(u,Du,\del{t}u) \del{t}^2 u\bigr).
}
Since we can already estimate the first term of the right hand side of \eqref{STpropA6a} using \eqref{STpropA5},
we turn to estimating the second term, which we write as follows
\leqn{STpropA7}{
\del{t}^{j-1}(D_{2}f(u,Du,\del{t}u)\cdot\del{t}Du) = \sum^{j-1}_{k=0} a_k \bigr(\del{t}^k D_{2}f\bigr)
\cdot \bigl(\del{t}^{j-1-k} D\del{t} u\bigr).
}
Next, we observe that
\leqn{STpropA8}{
D^\alpha\bigl[\del{t}^k D_{2}f
\cdot \del{t}^{j-1-k} D\del{t} u\bigr] =
\sum_{\beta+\gamma = \alpha} a_{\alpha\beta} \bigl(D^\beta \del{t}^k D_{2}f\bigr)\cdot\bigl( D^\gamma
\del{t}^{j-1-k} D \del{t}u\bigr) \qquad |\alpha|= s-j,
}
for appropriate constants $a_k$ and $a_{\alpha\beta}$. Letting
$C$ denote a constant of the form \eqref{STpropA6}, we now estimate \eqref{STpropA8} as follows:
\lalign{STpropA9}{
&\norm{D^\alpha\bigl[\del{t}^k D_{2}f
\cdot \del{t}^{j-1-k} D\del{t} u\bigr]}_{L^p(\Omega)}
 \lesssim \sum_{|\beta|+|\gamma|=s-j} \norm{\bigl(D^\beta \del{t}^k D_{2}f\bigr)\cdot\bigl( D^\gamma
\del{t}^{j-1-k} D \del{t}u\bigr)}_{L^p(\Omega)} \notag \\
&\text{\hspace{0.1cm}}\lesssim \sum_{|\beta|+|\gamma|=s-j}\norm{D^\beta \del{t}^k D_{2}f}_{L^{\frac{ps}{|\beta|+k}}(\Omega)}
\norm{D^\gamma
\del{t}^{j-1-k} D \del{t}u}_{L^{\frac{ps}{|\gamma|+j-k}}(\Omega)} \text{\hspace{0.2cm} by H\"{o}lder's inequality}  \notag \\
&\text{\hspace{0.1cm}}\lesssim \sum_{|\beta|+|\gamma|=s-j}
\norm{\del{t}^k D_{2}f}_{W^{|\beta|+k-k,\frac{ps}{|\beta|+k}}(\Omega)}
\norm{\del{t}^{j-1-k}\del{t}u}_{W^{|\gamma|+j-k- (j-k-1),\frac{ps}{|\gamma|+j-k}}(\Omega)} \notag\\
&\text{\hspace{0.1cm}}\leq C
\sum_{|\beta|+|\gamma|=s-j}\biggl(1+\norm{u}_{W^{|\beta|+k+1,\frac{ps}{|\beta|+k}}(\Omega)}+
\norm{\del{t}u}_{W^{|\beta|+k,\frac{ps}{|\beta|+k}}(\Omega)}\biggr) \notag \\
&\text{\hspace{0.2cm}} \times \biggl(1+\norm{u}_{W^{|\gamma|+j-k+1,\frac{ps}{|\gamma|+j-k}}(\Omega)}+
\norm{\del{t}u}_{W^{|\gamma|+j-k,\frac{ps}{|\gamma|+j-k}}(\Omega)}\biggr)
\text{\hspace{0.3cm} by induction hypothesis }  \notag \\
&\text{\hspace{0.1cm}}\leq C
\sum_{|\beta|+|\gamma|=s-j}\biggl(1+\norm{u}_{W^{s+1,p}(\Omega)}^{\frac{|\beta|+k}{s}}+
\norm{\del{t}u}_{W^{s,p}(\Omega)}^{\frac{|\beta|+k}{s}}\biggr) \notag \\
&\text{\hspace{5.4cm}} \times \biggl(1+\norm{u}_{W^{s,p}(\Omega)}^{\frac{|\gamma|+j-k}{s}}+
\norm{\del{t}u}_{W^{s,p}(\Omega)}^{\frac{|\gamma|+j-k}{s}}\biggr)
\text{\hspace{0.3cm} by Theorem \ref{GNMa}}  \notag \\
&\text{\hspace{0.1cm}}\leq C
(1+\norm{u}_{W^{s+1,p}(\Omega)}+\norm{\del{t}u}_{W^{s,p}(\Omega)}). \notag
}
This estimate together with the formula \eqref{STpropA8}, shows that we can, with the help
of Theorem \ref{interp}, estimate \eqref{STpropA7} by
\leqn{STpropA10}{
\norm{\del{t}^{j-1}(D_{2}f(u,Du,\del{t}u)\cdot \del{t}Du)}_{W^{s-j,p}(\Omega)}
\leq C
(1+\norm{u}_{W^{s+1,p}(\Omega)}+\norm{\del{t}u}_{W^{s,p}(\Omega)}),
}
where the constant $C$ is of the form \eqref{STpropA6}.

With the second term in \eqref{STpropA6a} estimated, we use relation
\leqn{STpropA11}{
\del{t}^2 u = a^{ij}(u,\del{}u)\del{i}\del{j}u + b^i(u,\del{}u)\del{i}\del{t}u + g(u,\del{}u) +h,
}
to write the third term in \eqref{STpropA6a} as
\eqn{STpropA12}{
\del{t}^{j-1}(D_3 f(u,Du,\del{t}u)\del{t}^2 u) = \del{t}^{j-1}\bigl[D_3 f(u,Du,\del{t}u) \bigl(a^{ij}(u,\del{}u)\del{i}\del{j}u + b^i(u,\del{}u)\del{i}\del{t}u + g(u,\del{}u) +h\bigr)\bigr].
}
Similar arguments employed above to derive \eqref{STpropA11} show also that
\leqn{STpropA13}{
\norm{\del{t}^{j-1}(D_{3}f(u,Du,\del{t}u)\del{t}^2u)}_{W^{s-j,p}(\Omega)}
\leq C
(1+\norm{u}_{W^{s+1,p}(\Omega)}+\norm{\del{t}u}_{W^{s,p}(\Omega)})
}
for a constant $C$ of the form \eqref{STpropA6}.
Together, the estimates \eqref{STpropA5}, \eqref{STpropA10} and \eqref{STpropA13} show that
\eqn{STpropA14}{
\norm{\del{t}^j f(u,Du,\del{t}u)}_{W^{p,s-j}(\Omega)}
\leq C\bigl(\norm{u}_{W^{1,\infty}(\Omega)},\norm{\del{t}u}_{L^\infty(\Omega)}\bigr)
(1+\norm{u}_{W^{p,s+1}(\Omega)}+\norm{\del{t}u}_{W^{s,p}(\Omega)}).
}
This completes the induction argument and the proof of the proposition.
\end{proof}

Similar arguments can be used to prove the following variant of Proposition \ref{STpropA}.

\begin{prop} \label{STpropC}
Suppose $s\in \Zbb_{\geq 1}$, $1\leq p \leq \infty$, $f\in C^{s+1}(\Rbb,\Rbb)$, $u\in W^{p,s+1}(\Omega)$,
$\del{t}u \in W^{p,s}(\Omega)$
and the higher time derivatives $\del{t}^\ell u$ $\ell \geq 2$ are obtained by formally differentiating
\eqn{STpropC2}{
\del{t}^2 u = a^{ij}(u,\del{}u)\del{i}\del{j}u + b^i(u,\del{}u)\del{i}\del{t}u + g(u,\del{}u) +h,
}
where $h\in W^{p,s+1}(\Omega)$, $\del{t}^\ell h =0$ and $a^{ij},b^j,g \in C^s(\Rbb\times \Rbb^{n+1},\Rbb)$.
Then $u$ satisfies the estimate
\eqn{STpropC3}{
\norm{\del{t}^j f(u)}_{W^{p,s+1-j}(\Omega)}
\leq C\bigl(\norm{u}_{W^{1,\infty}(\Omega)},\norm{\del{t}u}_{L^\infty(\Omega)}\bigr)
(1+\norm{u}_{W^{p,s+1}(\Omega)}+\norm{\del{t}u}_{W^{p,s}(\Omega)})
}
for $0\leq j \leq s+1$.
\end{prop}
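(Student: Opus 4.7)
The plan is to proceed by induction on $j$, mirroring the scheme used to prove Proposition \ref{STpropA}. The base case $j=0$ follows directly from Moser's inequality (Theorem \ref{GNMb}): since $f\in C^{s+1}(\Rbb,\Rbb)$ and $u \in W^{p,s+1}(\Omega)\cap L^\infty(\Omega)$ (with $f(u)$ reduced to the case $f(0)=0$ by subtracting a constant), one obtains
\begin{equation*}
\norm{f(u)}_{W^{p,s+1}(\Omega)} \leq C\bigl(\norm{u}_{L^\infty(\Omega)}\bigr)\bigl(1+\norm{u}_{L^\infty(\Omega)}^{s}\bigr)\norm{u}_{W^{p,s+1}(\Omega)},
\end{equation*}
which has the required form.

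For the induction step I would use the chain rule to rewrite $\del{t}^j f(u) = \del{t}^{j-1}\bigl(f'(u)\,\del{t}u\bigr)$ and then expand via Leibniz as
\begin{equation*}
\del{t}^j f(u) = \sum_{k=0}^{j-1}\binom{j-1}{k}\bigl(\del{t}^k[f'(u)]\bigr)\bigl(\del{t}^{j-k}u\bigr).
\end{equation*}
Each product would be estimated in $W^{p,s+1-j}(\Omega)$ by expanding $D^\alpha$ with $|\alpha|=s+1-j$ via Leibniz, applying H\"older's inequality with mixed Lebesgue exponents $\frac{ps}{|\beta|+k}$ and $\frac{ps}{|\gamma|+(j-k)}$ where $|\beta|+|\gamma|=s+1-j$, and then invoking Gagliardo--Nirenberg (Theorem \ref{GNMa}) together with interpolation (Theorem \ref{interp}). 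Because $f$ here depends only on $u$ (not on $\del{}u$ or $\del{t}u$), the chain-rule expansion is structurally simpler than the one in the proof of Proposition \ref{STpropA}, and one effectively only needs the analogue of the first term $\del{t}^{j-1}(D_1 f(u)\,\del{t}u)$ that appeared there.

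For those terms in the Leibniz expansion in which $j-k\geq 2$, the higher time derivative $\del{t}^{j-k}u$ is not directly controlled by the norms in the statement, so I would substitute for $\del{t}^2 u$ using the evolution equation and then differentiate it $(j-k-2)$-times in $t$. This reduces matters to products of quantities of the form $\del{t}^\mu\bigl(a^{ij}(u,\del{}u)\bigr)\del{i}\del{j}\del{t}^\nu u$ and similar combinations involving $b^i$, $g$ and $h$, which can be controlled via Proposition \ref{STpropA} applied to $a^{ij},b^i,g\in C^s(\Rbb\times\Rbb^{n+1})$ and to the forcing term $h\in W^{p,s+1}(\Omega)$, together with the induction hypothesis at strictly smaller values of $j$.

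The main obstacle, as in Proposition \ref{STpropA}, will be the careful bookkeeping of the Gagliardo--Nirenberg exponents: one has to verify that in each resulting estimate the powers of the high-norm quantities $\norm{u}_{W^{p,s+1}(\Omega)}$ and $\norm{\del{t}u}_{W^{p,s}(\Omega)}$ sum to exactly one, so that the product can be bounded linearly by $1+\norm{u}_{W^{p,s+1}(\Omega)}+\norm{\del{t}u}_{W^{p,s}(\Omega)}$, with all remaining dependence absorbed into the prefactor $C\bigl(\norm{u}_{W^{1,\infty}(\Omega)},\norm{\del{t}u}_{L^\infty(\Omega)}\bigr)$. The only new arithmetic point compared to Proposition \ref{STpropA} is that the target regularity is $s+1-j$ rather than $s-j$, which is matched by the one extra derivative of smoothness assumed on $f$; this keeps the interpolation identities consistent and closes the induction.
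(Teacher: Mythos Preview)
Your proposal is correct and matches the paper's approach: the paper itself does not give a separate proof of this proposition but simply notes that it follows from ``similar arguments'' to those used for Proposition~\ref{STpropA}, which is exactly the scheme you outline (induction on $j$, chain rule plus Leibniz, H\"older with mixed exponents, Gagliardo--Nirenberg, and substitution of the evolution equation for the higher time derivatives). One small bookkeeping point: the specific H\"older exponents you wrote, $\frac{ps}{|\beta|+k}$ and $\frac{ps}{|\gamma|+(j-k)}$ with $|\beta|+|\gamma|=s+1-j$, do not sum correctly (the denominators add to $s+1$, not $s$); the extra unit is absorbed because $f$ depends only on $u$, so one spatial derivative of $f'(u)$ is still $L^\infty$-controlled by $\norm{u}_{W^{1,\infty}}$, and the interpolation should be set up to exploit this --- but you have already flagged that the exponent arithmetic is the delicate part, and the adjustment is routine.
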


\begin{prop} \label{STpropB}
Suppose $s\in \Zbb_{\geq 1}$, $1\leq p \leq \infty$, $f^{\mu\nu}\in C^{s+1}(\Rbb,\Rbb)$, $u\in W^{p,s+1}(\Omega)$,
$\del{t}u \in W^{p,s}(\Omega)$
and the higher time derivatives $\del{t}^\ell u$ $\ell \geq 2$ are obtained by formally differentiating
\eqn{STpropB1}{
\del{t}^2 u = a^{ij}(u,\del{}u)\del{i}\del{j}u + b^i(u,\del{}u)\del{i}\del{t}u + g(u,\del{}u) +h,
}
where $h\in W^{p,s+1}(\Omega)$, $\del{t}^\ell h =0$ and $a^{ij},b^j,g \in C^s(\Rbb\times \Rbb^{n+1},\Rbb)$.
Then
\eqn{STpropB2}{
\norm{\del{\mu}[\del{t}^k,f^{\mu\nu}(u)\del{\nu}]u}_{W^{p,s-k}(\Omega)}
\leq C\bigl(\norm{u}_{W^{1,\infty}(\Omega)},\norm{\del{t}u}_{L^\infty(\Omega)}\bigr)
(1+\norm{u}_{W^{p,s+1}(\Omega)}+\norm{\del{t}u}_{W^{p,s}(\Omega)})
}
for $0\leq k \leq s$.
\end{prop}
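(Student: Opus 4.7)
First I will expand the commutator using Leibniz to write
\begin{equation*}
[\del{t}^k, f^{\mu\nu}(u)\del{\nu}]u = \sum_{\ell=1}^{k} \binom{k}{\ell}\, \del{t}^\ell[f^{\mu\nu}(u)]\, \del{\nu}\del{t}^{k-\ell} u,
\end{equation*}
so that $\del{\mu}[\del{t}^k, f^{\mu\nu}(u)\del{\nu}]u$ is a finite linear combination of expressions of the shape $\del{\mu}\bigl(\del{t}^\ell[f^{\mu\nu}(u)]\, \del{\nu}\del{t}^{k-\ell}u\bigr)$ with $1\leq \ell \leq k$. A further Leibniz expansion splits each such expression into pieces of type (I) $\bigl(\del{\mu}\del{t}^\ell[f^{\mu\nu}(u)]\bigr)\,\bigl(\del{\nu}\del{t}^{k-\ell}u\bigr)$ and type (II) $\bigl(\del{t}^\ell[f^{\mu\nu}(u)]\bigr)\,\bigl(\del{\mu}\del{\nu}\del{t}^{k-\ell}u\bigr)$, each of which is a product of two factors whose Sobolev regularity I will estimate separately.

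Next I will bound the $W^{p,s-k}(\Omega)$ norm of each product by mimicking the induction-and-interpolation argument from the proof of Proposition \ref{STpropA}: for every multi-index $\alpha$ with $|\alpha|\leq s-k$ I expand $D^\alpha(AB)$ via Leibniz, apply H\"older's inequality with exponents whose reciprocals sum to $1/p$, and invoke the Gagliardo--Nirenberg inequality (Theorem \ref{GNMa}) to interpolate each factor between $L^\infty$ (where $u$ and $\del{t}u$ are controlled by hypothesis) and the top-order Sobolev norms $\norm{u}_{W^{p,s+1}(\Omega)}$ and $\norm{\del{t}u}_{W^{p,s}(\Omega)}$. The top-order estimates on $\del{t}^\ell f^{\mu\nu}(u)$ and $\del{\mu}\del{t}^\ell f^{\mu\nu}(u)$ are supplied directly by Proposition \ref{STpropC} (applicable because $f^{\mu\nu}\in C^{s+1}$ and $1\leq \ell\leq k+1\leq s+1$), while the corresponding estimates on $\del{\nu}\del{t}^{k-\ell}u$ and $\del{\mu}\del{\nu}\del{t}^{k-\ell}u$ follow from Proposition \ref{STpropA} applied to the trivial composition $f(u,\del{}u)=u$, combined with the evolution equation \eqref{STpropB1} that generates the higher time derivatives. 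Assembling the resulting pieces yields the asserted bound
\begin{equation*}
\norm{\del{\mu}[\del{t}^k, f^{\mu\nu}(u)\del{\nu}]u}_{W^{p,s-k}(\Omega)}\leq C\bigl(\norm{u}_{W^{1,\infty}(\Omega)},\norm{\del{t}u}_{L^\infty(\Omega)}\bigr)\bigl(1+\norm{u}_{W^{p,s+1}(\Omega)}+\norm{\del{t}u}_{W^{p,s}(\Omega)}\bigr).
\end{equation*}

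The main obstacle will be the bookkeeping of derivative counts at the two extremes. In the case $\ell=k$ with the outer derivative $\del{\mu}$ being the time derivative $\del{0}$, the chain-rule expansion produces $\del{t}^{k+1}f^{\mu\nu}(u)$ at the top order, which requires the full $C^{s+1}$ regularity of $f^{\mu\nu}$, one degree higher than in Proposition \ref{STpropA}; this is precisely why the hypothesis $f^{\mu\nu}\in C^{s+1}$ appears in the statement. At the other end, when $\ell=1$ and $\mu$ is spatial, the factor $\del{\mu}\del{\nu}\del{t}^{k-1}u$ lies at the extreme of the available regularity of $u$, so the Gagliardo--Nirenberg exponents must be tuned to place this factor in its natural fractional Sobolev space rather than in $L^\infty$. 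Once these two endpoint cases are verified, the remaining interior terms follow from the same interpolation-and-induction scheme used in the proof of Proposition \ref{STpropA}, completing the argument.
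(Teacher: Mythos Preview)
Your proposal is correct and follows essentially the same route as the paper: expand the commutator by Leibniz, apply the outer $\del{\mu}$ via the product rule, and estimate each resulting bilinear term by distributing $D^\alpha$ with $|\alpha|\leq s-k$, pairing factors through H\"older, and interpolating with Gagliardo--Nirenberg against the $L^\infty$ control on $(u,\del{t}u)$. The only cosmetic difference is that the paper writes $\del{0}(\del{t}^{k-\ell}f^{\mu\nu})=\del{t}^{k-\ell}\bigl(Df^{\mu\nu}(u)\del{t}u\bigr)$ and then appeals to Proposition~\ref{STpropA} (for functions of $(u,\del{}u)$), whereas you invoke Proposition~\ref{STpropC} directly on $f^{\mu\nu}(u)$; since $f^{\mu\nu}$ depends on $u$ alone and lies in $C^{s+1}$, both invocations are valid and yield the same bounds.
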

\begin{proof}
Differentiating the formula
\eqn{StpropB5}{
[\del{t}^k,f^{\nu\mu}(u)\del{\mu}]u = \sum_{\ell=0}^{k-1}\binom{k}{\ell}\bigl[\del{t}^{k-\ell}f^{\nu i}\del{i}\del{t}^\ell  u + \del{t}^{k-\ell}f^{\nu 0} \del{t}^{\ell+1}u\bigr],
}
we see that
\lalign{StpropB6}{
\del{j}\bigl([\del{t}^k,f^{j\mu}(u)\del{\mu}]u\bigr)
=  \sum_{\ell=0}^{k-1}\binom{k}{\ell}\bigl[&\del{t}^{k-\ell}(Df^{j i}\del{j}u)\del{i}\del{t}^\ell u +
 \del{t}^{k-\ell}f^{ji}\del{j}\del{i}\del{t}^\ell  u   \notag \\
 & +
\del{t}^{k-\ell}(Df^{j 0}\del{j}u) \del{t}^{\ell+1}u   + \del{t}^{k-\ell}f^{j 0} \del{t}^{\ell+1}\del{j}u \bigr] \label{StpropB6.1}
}
and
\lalign{StpropB7}{
\del{0}\bigl([\del{t}^k,f^{0\mu}(u)\del{\mu}]u\bigr)
=  \sum_{\ell=0}^{k-1}\binom{k}{\ell}\bigl[&\del{t}^{k-\ell}(Df^{0 i}\del{t}u)\del{i}\del{t}^\ell u +
 \del{t}^{k-\ell}f^{0i}\del{t}\del{i}\del{t}^\ell  u   \notag \\
 & +
\del{t}^{k-\ell}(Df^{0 0}\del{t}u) \del{t}^{\ell+1}u   + \del{t}^{k-\ell}f^{0 0} \del{t}^{\ell+2} u \bigr]. \label{StpropB7.1}
}

To estimate \eqref{StpropB6.1} and \eqref{StpropB7.1}, we start by differentiating the term
\leqn{STpropB8}{
\del{t}^{k-\ell}f^{ji}\del{j}\del{i}\del{t}^\ell  u
}
$s-k$ times
to get
\leqn{STpropB8a}{
D^\alpha\bigl(\del{t}^{k-\ell}f^{ji}\del{j}\del{i}\del{t}^\ell  u\bigr) = \sum_{\beta+\gamma=\alpha} a_{\alpha\beta} D^\beta\bigl(\del{t}^{k-\ell}f^{ji}\bigr)D^\gamma \bigl(\del{j}\del{i}\del{t}^\ell  u\bigr)
\quad |\alpha|=s-1-k
}
for appropriate constants $a_{\alpha\beta}$. Letting
$C$ denote a constant of the form,
\eqn{STpropB9}{
C\bigl(\norm{u}_{W^{1,\infty}(\Omega)},\norm{\del{t}u}_{L^\infty(\Omega)}\bigr),
}
we estimate \eqref{STpropB8a} as follows:
\lalign{STpropB10}{
&\norm{D^\alpha\bigl[\del{t}^{k-\ell}f^{ij}
\del{i}\del{j}\del{t}^{\ell} u\bigr]}_{L^p(\Omega)}
 \lesssim \sum_{|\beta|+|\gamma|=s-k} \norm{\bigl(D^\beta\del{t}^{k-\ell} f^{ij}\bigr)\bigl( D^\gamma
\del{i}\del{j}\del{t}^{\ell}u\bigr)}_{L^p(\Omega)} \notag \\
&\text{\hspace{0.1cm}}\lesssim \sum_{|\beta|+|\gamma|=s-k}\norm{D^\beta \del{t}^{k-\ell}f}_{L^{\frac{ps}{|\beta|+k-\ell-1}}(\Omega)}
\norm{D^{\gamma}D^2
\del{t}^{\ell} u}_{L^{\frac{ps}{|\gamma|+1+\ell}}(\Omega)} \text{\hspace{0.2cm} by H\"{o}lder's inequality}  \notag \\
&\text{\hspace{0.1cm}}\lesssim \sum_{|\beta|+|\gamma|=s-k}
\norm{\del{t}^{k-\ell}f}_{W^{|\beta|,\frac{ps}{|\beta|+k-\ell-1}}(\Omega)}
\norm{\del{t}^{\ell}u}_{W^{|\gamma|+2,\frac{ps}{|\gamma|+\ell+1}}(\Omega)} \notag\\
&\text{\hspace{0.1cm}}\leq C
\sum_{|\beta|+|\gamma|=s-k}\biggl(1+\norm{u}_{W^{|\beta|+k-\ell,\frac{ps}{|\beta|+k-\ell-1}}(\Omega)}+
\norm{\del{t}u}_{W^{|\beta|+k-\ell-1,\frac{ps}{|\beta|+k-\ell-1}}(\Omega)}\biggr) \notag \\
&\text{\hspace{0.2cm}} \times \biggl(1+\norm{u}_{W^{|\gamma|+2+\ell,\frac{ps}{|\gamma|+1+\ell}}(\Omega)}+
\norm{\del{t}u}_{W^{|\gamma|+1+\ell,\frac{ps}{|\gamma|+1+\ell}}(\Omega)}\biggr)
\text{\hspace{0.3cm} by Proposition \ref{STpropA} }  \notag \\
&\text{\hspace{0.1cm}}\leq C
\sum_{|\beta|+|\gamma|=s-k}\biggl(1+\norm{u}_{W^{s+1,p}(\Omega)}^{\frac{|\beta|+k-\ell-1}{s}}+
\norm{\del{t}u}_{W^{s,p}(\Omega)}^{\frac{|\beta|+k-\ell-1}{s}}\biggr) \notag \\
&\text{\hspace{5.4cm}} \times \biggl(1+\norm{u}_{W^{s,p}(\Omega)}^{\frac{|\gamma|+1+\ell}{s}}+
\norm{\del{t}u}_{W^{s,p}(\Omega)}^{\frac{|\gamma|+1+\ell}{s}}\biggr)
\text{\hspace{0.3cm} by Theorem \ref{GNMa}}  \notag \\
&\text{\hspace{0.1cm}}\leq C
(1+\norm{u}_{W^{s+1,p}(\Omega)}+\norm{\del{t}u}_{W^{s,p}(\Omega)}). \notag
}
This estimate together with the formula \eqref{STpropB8a} shows that we can, with the help
of Theorem \ref{interp}, estimate \eqref{STpropB8} by
\eqn{STpropB11}{
\norm{\del{t}^{k-\ell}f^{ji}\del{j}\del{i}\del{t}^\ell  u}_{W^{s-k}(\Omega)}
\leq C\bigl(\norm{u}_{W^{1,\infty}(\Omega)},\norm{\del{t}u}_{L^\infty(\Omega)}\bigr)
(1+\norm{u}_{W^{s+1,p}(\Omega)}+\norm{\del{t}u}_{W^{s,p}(\Omega)}),
}
for $0\leq \ell \leq k-1$. Using the same arguments, it is not difficult to verify similar estimates hold
for the remaining terms in \eqref{StpropB6.1} and \eqref{StpropB7.1}, which allows us to conclude that
\eqn{STpropB12}{
\norm{\del{\mu}[\del{t}^k,f^{\mu\nu}(u)\del{\nu}]u}_{W^{p,s-k}(\Omega)}
\leq C\bigl(\norm{u}_{W^{1,\infty}(\Omega)},\norm{\del{t}u}_{L^\infty(\Omega)}\bigr)
(1+\norm{u}_{W^{p,s+1}(\Omega)}+\norm{\del{t}u}_{W^{p,s}(\Omega)})
}
for $0\leq k \leq s$.
\end{proof}
\sect{potential}{Potential Theory}

In this appendix, we recall some results from potential theory that we require to prove energy estimates. We begin
by recalling the following well known result.\footnote{Here, $\Delta = \delta^{ij}\del{i}\del{j}$ is the flat Laplacian}

\begin{prop} \label{Lisoprop}
Suppose $p\in (1,\infty)$, $s\geq0$ and $\psi \in C^\infty(\Tbb^n)$ satisfies $\psi \geq 0$ on $\Tbb^n$
and $\psi(x_0) > 0$ for some $x_0 \in \Tbb^n$. Then the map
\eqn{Liso}{
\Delta - \psi : W^{s+1,p}(\Tbb^n) \longrightarrow W^{s-1,p}(\Tbb^n) \quad (s\geq 0)
}
is an isomorphism.
\end{prop}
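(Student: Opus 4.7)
The plan is to combine standard elliptic Fredholm theory on the compact manifold $\Tbb^n$ with a direct integration-by-parts argument that rules out a nontrivial kernel. The index-zero Fredholm statement will come from a compact-perturbation argument, and the kernel triviality will come from the sign structure of $L=\Delta-\psi$ together with elliptic regularity that lets us work in the smooth/$L^2$ setting regardless of $p$.

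First, I would show that $L:W^{s+1,p}(\Tbb^n)\to W^{s-1,p}(\Tbb^n)$ is Fredholm of index zero. Writing $L=L_0+M$ with $L_0:=\Delta-1$ and $M:=1-\psi$, the operator $L_0$ is an isomorphism from $W^{s+1,p}(\Tbb^n)$ onto $W^{s-1,p}(\Tbb^n)$ by a direct Fourier-series computation on the torus (its symbol $-|\xi|^2-1$ is bounded away from zero, and multiplier theory on $\Tbb^n$ gives the $W^{k,p}$ bounds). Since $\psi\in C^\infty(\Tbb^n)$, multiplication by $M$ sends $W^{s+1,p}(\Tbb^n)$ continuously into itself, and the inclusion $W^{s+1,p}(\Tbb^n)\hookrightarrow W^{s-1,p}(\Tbb^n)$ is compact by Rellich--Kondrachov on the compact manifold $\Tbb^n$. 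Hence $M$ is compact as a map $W^{s+1,p}\to W^{s-1,p}$, and $L=L_0+M$ is a compact perturbation of an isomorphism, so $L$ is Fredholm of index zero.

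Next, I would show $\ker L=\{0\}$. Suppose $u\in W^{s+1,p}(\Tbb^n)$ satisfies $\Delta u=\psi u$. Since $\psi$ is smooth and $L$ is elliptic, a standard bootstrap in elliptic regularity upgrades $u$ to $C^\infty(\Tbb^n)$ (from $\Delta u=\psi u$ with $u\in W^{s+1,p}$ we get $\Delta u\in W^{s+1,p}$, so $u\in W^{s+3,p}$, and iterating). Testing the equation against $u$ and integrating by parts on the boundaryless manifold $\Tbb^n$ yields
\eqn{plan1}{
-\int_{\Tbb^n}|\nabla u|^2\,dx=\int_{\Tbb^n}\psi\,u^2\,dx.
}
The left-hand side is nonpositive and the right-hand side is nonnegative, so both vanish. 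Thus $\nabla u\equiv 0$, hence $u$ is constant on the connected manifold $\Tbb^n$, and then $\int\psi u^2=0$ combined with $\psi(x_0)>0$ forces $u(x_0)=0$, so $u\equiv 0$.

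Combining the two steps, $L$ is Fredholm of index zero with trivial kernel, hence surjective, and is therefore an isomorphism from $W^{s+1,p}(\Tbb^n)$ onto $W^{s-1,p}(\Tbb^n)$. The only mildly delicate point is being careful with the Fredholm/compactness statements when $s=0$ (so the codomain is the negative-order space $W^{-1,p}(\Tbb^n)$), but this is standard since $W^{-1,p}(\Tbb^n)$ is defined as the dual of $W^{1,p'}(\Tbb^n)$ and multiplication by the smooth function $\psi$ and the Rellich embedding both extend in the expected way.
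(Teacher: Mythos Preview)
The paper does not give a proof of this proposition; it is introduced as a ``well known result'' and simply recalled for later use. Your argument is correct and supplies the standard details the paper omits: the Fredholm-index-zero reduction via compact perturbation of $\Delta-1$, followed by the integration-by-parts kernel argument (after elliptic bootstrap to smooth $u$), is exactly the expected route to this fact, and your handling of the $s=0$ case is appropriate.
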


Letting
\eqn{Lcdef}{
\Lc = (\Delta-\psi)^{-1} : W^{s-1,p}(\Tbb^n) \longrightarrow W^{s,p}(\Tbb^n)
}
denote the inverse of $\Delta-\psi$, we can represent $\Lc$ as
\eqn{Lcrep}{
\Lc v(x) = \int_{\Tbb^n}  E(x,y)v(y)\, d^n x
}
where $E$ is the integral kernel of $\Lc$.
 Fixing an open set $\Omega \subset \Tbb^n$ with $C^\infty$ boundary, we then define the \textit{single and double layer potentials} by
\leqn{Spotdef}{
\mathcal{S} v (x) =   \int_{\partial \Omega}  E(x,y)v(y)\, d\sigma (y) \quad x \notin \partial \Omega
}
and
\leqn{Dpotdef}{
\mathcal{D} v (x) =   \int_{\partial \Omega}  \frac{\partial E}{\partial \nu_y}(x,y)v(y)\, d\sigma (y) \quad x \notin \partial \Omega,
}
respectively. Here, $d\sigma$ is the natural area element on $\partial \Omega$ and $\nu$ is the outward unit conormal to $\Omega$.

\begin{prop} \label{potprop}
Suppose $p\in (1,\infty)$, $k\in \Zbb_{\geq 1}$, and $\psi(x_0) > 0$ for some\footnote{Recall that
 $\Omega^c = \Tbb^n\setminus \overline{\Omega}$.} $x_0 \in \Omega^c$. Then the linear map\footnote{Here $\Rc_\Omega$ denotes
 the restriction operator, i.e. for a function $f$ defined on $\Tbb^n$, $\Rc_\Omega f (x) := f(x)$ for all $x\in \Omega$.}
\eqn{potprop1}{
  \Rc_\Omega \circ\Lc \circ \chi_\Omega \: : \: W^{k,p}(\Omega)  \longrightarrow  W^{k+2,p}(\Omega)
}
is continuous (i.e. bounded).
\end{prop}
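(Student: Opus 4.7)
The plan is to decompose $v := (\Lc \circ \chi_\Omega)(u)$ into a globally regular piece on all of $\Tbb^n$ and a remainder expressible through single- and double-layer potentials with smooth densities, then exploit the classical mapping properties of the layer potentials.

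First I would fix a bounded extension operator and let $\tilde u \in W^{k,p}(\Tbb^n)$ extend $u$ with $\norm{\tilde u}_{W^{k,p}(\Tbb^n)} \lesssim \norm{u}_{W^{k,p}(\Omega)}$. Since $\chi_\Omega u = \tilde u - \chi_{\Omega^c}\tilde u$, I split
\leqn{potplan1}{
v = \Lc \tilde u - \Lc(\chi_{\Omega^c}\tilde u) =: \tilde v - w.
}
By Proposition \ref{Lisoprop}, $\tilde v \in W^{k+2,p}(\Tbb^n)$ with $\norm{\tilde v}_{W^{k+2,p}(\Tbb^n)} \lesssim \norm{\tilde u}_{W^{k,p}(\Tbb^n)}$, so the contribution of $\tilde v|_\Omega$ is already controlled. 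The real task is to show $w|_\Omega \in W^{k+2,p}(\Omega)$ with an analogous bound.

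The key step is to rewrite $w|_\Omega$ via Green's identity. Since $(\Delta - \psi)\tilde v = \tilde u$ on $\Tbb^n$, for each fixed $x \in \Omega$ I apply Green's second identity to $E(x,\cdot)$ and $\tilde v$ on $\Omega^c$. The volume contribution $\int_{\Omega^c}\tilde v(y)(\Delta_y - \psi(y))E(x,y)\,dy$ vanishes because $(\Delta_y - \psi(y))E(x,y)=0$ off the diagonal and $x\in\Omega$ lies outside the $y$-domain $\Omega^c$, leaving only the boundary term
\leqn{potplan2}{
w(x) = \mathcal{D}(\tilde v|_{\del{}\Omega})(x) - \mathcal{S}(\del{\nu}\tilde v|_{\del{}\Omega})(x), \quad x \in \Omega,
}
where $\nu$ is the outward unit conormal to $\Omega$. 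The standard trace theorem places the two densities in $W^{k+2-1/p,p}(\del{}\Omega)$ and $W^{k+1-1/p,p}(\del{}\Omega)$ respectively, with norms controlled by $\norm{\tilde u}_{W^{k,p}(\Tbb^n)}$.

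To conclude, I would invoke the classical mapping properties of layer potentials on bounded smooth domains, namely
\leqn{potplan3}{
\mathcal{S}: W^{s,p}(\del{}\Omega) \longrightarrow W^{s+1+1/p,p}(\Omega), \qquad \mathcal{D}: W^{s,p}(\del{}\Omega) \longrightarrow W^{s+1/p,p}(\Omega), \quad s>0.
}
Applied with $s=k+1-1/p$ and $s=k+2-1/p$ respectively, both $\mathcal{S}(\del{\nu}\tilde v)$ and $\mathcal{D}(\tilde v)$ lie in $W^{k+2,p}(\Omega)$ with norms controlled by $\norm{u}_{W^{k,p}(\Omega)}$. Combining with $\tilde v|_\Omega$ yields $v|_\Omega \in W^{k+2,p}(\Omega)$ with the desired continuity estimate.

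I expect the main technical hurdle to be \eqref{potplan3}: these bounds are classical for the Newtonian kernel but are needed here for the integral kernel $E$ of $(\Delta - \psi)^{-1}$ on $\Tbb^n$. Since $\psi \in C^\infty(\Tbb^n)$ and the leading singularity of $E(x,y)$ coincides with that of the flat Green's function, one can reduce to the classical case by writing $E$ as a standard parametrix plus a smoother remainder; alternatively one may localize near $\del{}\Omega$, straighten the boundary, and verify the estimates directly via Fourier analysis in the tangential variables.
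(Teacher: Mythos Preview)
Your approach is correct and essentially coincides with what the paper invokes: the paper's own proof is a one-line reference to Proposition~3.6 of \cite{Andersson_et_al:2011}, noting that the layer-potential argument there carries over once the Newtonian kernel on $\Rbb^3$ is replaced by the integral kernel $E$ of $(\Delta-\psi)^{-1}$ on $\Tbb^n$, using precisely the operators $\mathcal{S}$ and $\mathcal{D}$ defined in \eqref{Spotdef}--\eqref{Dpotdef}. Your decomposition via an extension $\tilde u$, the Green's identity computation leading to \eqref{potplan2}, and the appeal to the mapping properties \eqref{potplan3} constitute exactly such an adaptation; you have also correctly identified the only nontrivial technical point, namely that \eqref{potplan3} must be justified for $E$ rather than the flat Newtonian kernel, and your parametrix-plus-smoother-remainder reduction is the standard way to handle this.
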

\begin{proof}
The proof follows from a straightforward adaptation of Proposition 3.6 in \cite{Andersson_et_al:2011}. Here, one  simply needs to use
the analogous mapping properties for the single and double layer potential, as defined above in \ref{Spotdef} and \ref{Dpotdef}, as
a replacement for the potential theory used in \cite{Andersson_et_al:2011} that was based on the (flat) Laplacian on $\Rbb^3$. With
this replacement, the proof from \cite{Andersson_et_al:2011} goes through directly without any further changes needed.
\end{proof}
\sect{weak}{Weak solutions of wave equations}

We recall some basic facts about weak solutions to linear wave equations.
We begin with the definition of a weak solution.
\begin{Def}
Suppose $a^{\mu\nu} \in W^{1,\infty}([0,T],L^\infty(\Gbb^n))$, $a^{\mu\nu}=a^{\nu\mu}$,
$p^\mu\in H^1([0,T],L^2(\Gbb^n))$, $q^{\nu}\in W^{1,\infty}\bigl([0,T],L^n(\Gbb^n)\bigr)\cap
L^\infty([0,T]\times\Gbb^n)$, and there exists a $\kappa > 0$ such that
\eqn{weakA}{
\kappa |\xi|^2 \leq a^{ij}\xi_i\xi_j \quad \text{for all $\xi = (\xi_i) \in \Rbb^n$}
\AND
a^{00} \leq -\kappa.
}
Then we say that $u \in H^1([0,T]\times \Gbb^n)$ is a \emph{weak solution} of
\lalign{weakB}{
\del{\mu}(a^{\mu\nu} \del{\nu} u) &= f + \del{\mu}( p^\mu + q^{\mu} u),  \label{weakB.1}\\
(u|_{t=0},\del{t}u|_{t=0})& = (u_0,u_1) \in H^1(\Gbb^n)\times L^2(\Gbb^n), \label{weakB.2}
}
if\footnote{Here, following standard notation, ``$\rightharpoonup$'' denotes weak convergence.}
\eqn{weakC}{
(u(t),\del{t}u(t)) \rightharpoonup (u_0,u_1) \quad \text{in $H^1(\Gbb^n)\times L^2(\Gbb^n)$}
}
and
\eqn{weakD}{
\ip{a^{\mu\nu}\del{\mu}u}{\phi}_{L^2([0,T]\times \Gbb^n)}
= -\ip{f}{\phi}_{L^2([0,T]\times \Gbb^n)} + \ip{p^\mu + q^{\mu} u}{\del{\mu}\phi}_{L^2([0,T]\times \Gbb^n)}
}
for all $\phi \in H^1([0,T],L^2(\Gbb^n))\cap L^2([0,T],H^1(\Gbb^n))$.
\end{Def}

With the above notion of a weak solution, the proof of the next theorem is just a special case
of Theorem 2.2 from \cite{Koch:1993}.

\begin{thm} \label{weakthm}
Suppose $u$ is a weak solution of \eqref{weakB.1}-\eqref{weakB.2}. Then $u \in C([0,T],H^1(\Gbb^n))\cap
C^1([0,T],L^2(\Gbb^n))$ and $u$ satisfies the estimate
\eqn{weakE}{
\norm{u(t_2)}_{E(\Gbb^n)} \leq c\left(\norm{u(t_1)}_{E(\Gbb^n)} + d_1 + \int_{t_1}^{t_2} d_2(\tau)\norm{u(\tau)}_{E(\Gbb^n)}
+d_3(\tau) \, d\tau
 \right)
}
for all $0\leq t_1 \leq t_2 \leq T$, where
\alin{weakF}{
d_1 &=  \norm{p(t_1)}_{L^2(\Gbb^n)} +(1+\norm{q(t_1)}_{L^\infty(\Gbb^n)})\norm{u(t_1)}_{L^2(\Gbb^n)},\\
d_2(t) &= 1+ \norm{\del{t}a(t)}_{L^\infty(\Gbb^n)} + \norm{q(t)}_{L^\infty(\Gbb^n)} +
\norm{\del{t}q(t)}_{L^n(\Gbb^n)} \\
d_3(t) &= \norm{f(t)}_{L^2(\Gbb^n)} + \norm{\del{t}p(t)}_{L^n(\Gbb^n)}
}
and $c=c(\kappa,\norm{a}_{L^\infty([0,T]\times \Gbb^n)})$.
\end{thm}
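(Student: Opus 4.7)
The proof of Theorem \ref{weakthm} is a standard energy estimate for weak solutions of linear hyperbolic equations in the style of \cite[Theorem 2.2]{Koch:1993}. My plan has three steps: set up a coercivity-weighted energy, derive the corresponding identity formally by testing with $\del{t}u$, and justify the manipulations rigorously via time-mollification before closing with Gronwall.

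First, I would introduce the weighted energy
$$\mathcal{E}(t) := \tfrac{1}{2}\int_{\Gbb^n}\bigl(-a^{00}(t,x)|\del{t}u|^2 + a^{ij}(t,x)\del{i}u\,\del{j}u + |u|^2\bigr)\,dx,$$
which, by the coercivity bounds $a^{00}\leq -\kappa$, $a^{ij}\xi_i\xi_j\geq \kappa|\xi|^2$ and the $L^\infty$ bound on $a$, is equivalent to $\norm{u(t)}_{E(\Gbb^n)}^2$ with constants depending only on $\kappa$ and $\norm{a}_{L^\infty([0,T]\times\Gbb^n)}$. Formally multiplying \eqref{weakB.1} by $\del{t}u$, integrating on $[t_1,t_2]\times\Gbb^n$, and using the symmetry of $a^{\mu\nu}$ together with integration by parts in space and in time, the principal part produces $\mathcal{E}(t_2)-\mathcal{E}(t_1)$ plus an interior contribution of the shape $\int\!\!\int \del{t}a^{\mu\nu}\del{\mu}u\,\del{\nu}u$, bounded by $\int\norm{\del{t}a(\tau)}_{L^\infty}\norm{u(\tau)}_E^2\,d\tau$. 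For the divergence-form source $\del{\mu}(p^\mu + q^\mu u)$, I would integrate by parts spatially (for the $\mu=i$ components) to trade $\del{i}(p^i+q^iu)$ for a derivative on $\del{t}u$, then integrate by parts in time to transfer $\del{t}$ off of $\del{i}u$ and $u_t$; for the $\mu=0$ component one proceeds analogously on $\int\!\!\int(p^0+q^0 u)\del{t}u_t$. This yields the time-boundary term $[\int(p^\mu+q^\mu u)\del{\mu}u\,dx]_{t_1}^{t_2}$, whose absolute value at $t_1$ is exactly the $d_1$ appearing in \eqref{weakE} (by Cauchy--Schwarz and the $L^\infty$ bound on $q$), while its value at $t_2$ is reabsorbed into $\mathcal{E}(t_2)$ via Young's inequality.

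Because $u$ is only a weak solution, $\del{t}u$ is not an admissible test function; the hardest part of the proof is to justify these formal manipulations. To do so I would insert into the weak formulation the Steklov time-average $(\del{t}u)_h\cdot\eta_\epsilon$, where $(\cdot)_h(t) = h^{-1}\int_t^{t+h}(\cdot)\,ds$ and $\eta_\epsilon$ is a smooth cutoff approximating $\chi_{[t_1,t_2]}$, and pass to the limit $h,\epsilon\searrow 0$. Using that $u\in H^1([0,T]\times\Gbb^n)$, that $(u(t),\del{t}u(t))$ is a well-defined element of $H^1\times L^2$ for a.e.\ $t$, and standard commutator-with-mollifier identities, the formal identity is recovered almost everywhere in $t_1,t_2$. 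With the identity in hand, the remaining interior terms are estimated by H\"older's inequality combined with the Sobolev embedding $H^1(\Gbb^n)\hookrightarrow L^{2n/(n-2)}(\Gbb^n)$ (valid since $n\geq 3$): the cross term $\int\!\!\int \del{t}q^\mu u\,\del{\mu}u$ is bounded by $\int\norm{\del{t}q(\tau)}_{L^n}\norm{u(\tau)}_{L^{2n/(n-2)}}\norm{\del{}u(\tau)}_{L^2}\,d\tau\lesssim\int\norm{\del{t}q(\tau)}_{L^n}\norm{u(\tau)}_E^2\,d\tau$, feeding into the $d_2\norm{u}_E^2$ part of the estimate; the terms $q^\mu\,\del{t}u\,\del{\mu}u$ give $\norm{q}_{L^\infty}\norm{u}_E^2$; and the remaining forcing contributions $\int f\del{t}u + \int\del{t}p^\mu\del{\mu}u$ are bounded by $d_3(\tau)\norm{u(\tau)}_E$ after H\"older, where the exponent in $d_3$ is chosen to be compatible with the exponents dictated by the Sobolev embedding above.

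Collecting the bounds, applying Young's inequality to separate quadratic from linear contributions, and invoking Gronwall's inequality on $\norm{u(t)}_E$ produces precisely \eqref{weakE} with the constant depending only on $\kappa$ and $\norm{a}_{L^\infty([0,T]\times\Gbb^n)}$. For the continuity statement, I would apply the same estimate to $u(s)-u(t)$ with $s,t$ close (noting that this difference weakly solves a linear equation with small data), concluding that $\mathcal{E}(t)$ is continuous; combining this norm-continuity with the weak continuity $(u(t),\del{t}u(t))\rightharpoonup(u_0,u_1)$ built into the definition of weak solution upgrades the regularity to $u\in C([0,T],H^1)\cap C^1([0,T],L^2)$ by standard Hilbert-space arguments. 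The principal difficulty throughout is the rigorous Steklov justification of the testing step at the regularity of a weak solution; the auxiliary difficulty is matching the H\"older exponents so that the $L^n$ norms of $\del{t}q$ (and, correspondingly, $\del{t}p$) close against the $H^1$ norm of $u$ via the Sobolev embedding, which is precisely why the dimensional restriction $n\geq 3$ is imposed.
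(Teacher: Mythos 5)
Your outline is essentially correct, but it is not the route the paper itself takes: the paper gives no argument for Theorem \ref{weakthm} at all, stating only that it is a special case of Theorem 2.2 of \cite{Koch:1993}. What you have written is therefore a self-contained reconstruction of the cited result, and its structure is the right one: the coercivity-weighted energy equivalent to $\norm{u(t)}_{E(\Gbb^n)}^2$ with constants depending only on $\kappa$ and $\norm{a}_{L^\infty([0,T]\times\Gbb^n)}$; testing with (a time-regularization of) $\del{t}u$, which is indeed the delicate step since $\del{t}u$ is not an admissible test function at the stated regularity; the time integration by parts on the divergence-form source producing the boundary term $\int(p^\mu+q^\mu u)\del{\mu}u\,dx$ at $t_1$, which accounts for $d_1$; the commutator term $\del{t}a^{\mu\nu}\del{\mu}u\,\del{\nu}u$ and the splitting $\del{t}(q^\mu u)=\del{t}q^\mu\,u+q^\mu\del{t}u$ estimated via H\"older and $H^1(\Gbb^n)\hookrightarrow L^{2n/(n-2)}(\Gbb^n)$ ($n\geq 3$), which accounts for $d_2$; and Gronwall. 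Two soft spots deserve mention. First, for the term $\int\del{t}p^\mu\,\del{\mu}u\,dx$ your remark that ``the exponent in $d_3$ is chosen to be compatible with the Sobolev embedding'' does not actually work: H\"older pairs $\norm{\del{t}p}_{L^n(\Gbb^n)}$ with $\norm{\del{}u}_{L^{n/(n-1)}(\Gbb^n)}$, which the energy does not control when $\Gbb^n=\Rbb^n$; your argument naturally yields $\norm{\del{t}p(t)}_{L^2(\Gbb^n)}$ there, which is what the hypothesis $p\in H^1([0,T],L^2(\Gbb^n))$ supports, and the $L^n$ norm in the stated $d_3$ is best regarded as inherited from Koch's formulation rather than something your H\"older step should be forced to reproduce. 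Second, the continuity statement cannot literally be obtained by ``applying the estimate to $u(s)-u(t)$'' (that difference is not a solution of an evolution problem in any useful sense); the standard route, which your last sentence gestures at, is to show from the regularized energy identity that $t\mapsto\mathcal{E}(t)$ is continuous, to obtain weak continuity of $t\mapsto(u(t),\del{t}u(t))$ in $H^1\times L^2$ from the equation and the uniform bound, and then to combine norm continuity with weak continuity to get strong continuity. With those two repairs, your plan is a faithful account of the proof the paper delegates to \cite{Koch:1993}, and it has the advantage of making the origin of each of $d_1$, $d_2$, $d_3$ explicit.
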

\sect{scale}{Field rescalings}

In this appendix, we establish the behavior of the norms $\Hc^{k,s}(Q_1)$ and $H^s(Q^+_1)$ under rescaling. These results
are used repeatedly in Section \ref{linear} when we exploit the freedom to localize the estimates for the linear IVP \eqref{linIVP.1}-\eqref{linIVP.2}.

\begin{prop} \label{scalepropA}
Suppose $0<\delta\leq 1$, $s,\ell\in \Zbb_{\geq 0}$, $n\geq 3$, $0\leq \sigma \leq 1$, $0\leq \sigma < s-n/2$, $s-\ell \geq 0$,
$f\in \Hc^{2,s}(Q_1)$,  $g\in \Hc^{0,s-\ell}(Q_1)$, $h\in \Hc^{m_{s-\ell},s-\ell}(Q_1)$
and let
\eqn{scalepropA.2}{
f_\delta(x) = \frac{f(\delta x)-f(0)}{\delta^\sigma}, \quad g_\delta(x) = g(\delta x) \AND h_\delta(x) = h(\delta x).
}
Then
\alin{scalepropA.3}{
\norm{f_\delta}_{\Hc^{2,s}(Q_1)} &\lesssim  \norm{f}_{\Hc^{2,s}(Q_1)}, \\
\norm{g_\delta}_{\Hc^{0,s-\ell}(Q_1)} &\lesssim \frac{1}{\delta^\ell}\norm{g}_{\Hc^{0,s-\ell}(Q_1)}
\intertext{and}
\norm{h_\delta}_{\Hc^{m_{s-\ell},s-\ell}(Q_1)} &\lesssim \frac{1}{\delta^\ell}\norm{h}_{\Hc^{m_{s-\ell},s-\ell}(Q_1)}.
}
\end{prop}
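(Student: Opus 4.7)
The plan is to attack all three inequalities with the same toolkit: the chain rule $\partial^\alpha[u(\delta\cdot)]=\delta^{|\alpha|}(\partial^\alpha u)(\delta\cdot)$; the change of variables $y=\delta x$, which produces a Jacobian $\delta^{-n}$ in $L^2$ integrals while restricting the integration domain from $Q_1^\pm$ to $Q_\delta^\pm\subset Q_1^\pm$; and Sobolev embedding on each side $Q_1^\pm$, where the hypothesis $\sigma<s-n/2$ guarantees $s>n/2$. Since the $\Hc$-norm splits additively across $Q_1^+$, $Q_1^-$, and the global $H^k(Q_1)$ piece, I work derivative-by-derivative on each side and sum.

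For the estimate on $g_\delta$ I fix a multi-index $|\alpha|\le s-\ell$ and start from
\[
\norm{\partial^\alpha g_\delta}_{L^2(Q_1^+)}=\delta^{|\alpha|-n/2}\norm{\partial^\alpha g}_{L^2(Q_\delta^+)}.
\]
If $s-\ell-|\alpha|>n/2$, the embedding $H^{s-\ell-|\alpha|}(Q_1^+)\hookrightarrow L^\infty(Q_1^+)$ yields $\norm{\partial^\alpha g}_{L^2(Q_\delta^+)}\lesssim\delta^{n/2}\norm{g}_{H^{s-\ell}(Q_1^+)}$. Otherwise, $H^{s-\ell-|\alpha|}\hookrightarrow L^{p_\alpha}$ with $p_\alpha=2n/(n-2(s-\ell-|\alpha|))$ combined with H\"older's inequality on the cube $Q_\delta^+$ of volume $\approx\delta^n$, together with the arithmetic $n(p_\alpha-2)/(2p_\alpha)=s-\ell-|\alpha|$, gives $\norm{\partial^\alpha g}_{L^2(Q_\delta^+)}\lesssim\delta^{s-\ell-|\alpha|}\norm{g}_{H^{s-\ell}(Q_1^+)}$. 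In both cases, multiplying by the Jacobian factor produces $\norm{\partial^\alpha g_\delta}_{L^2(Q_1^+)}\lesssim\delta^{s-\ell-n/2}\norm{g}_{H^{s-\ell}(Q_1^+)}$, and $s-\ell-n/2\ge -\ell$ (from $s>n/2$) together with $\delta\le 1$ gives the required $\lesssim\delta^{-\ell}\norm{g}_{H^{s-\ell}(Q_1^+)}$. The estimate on $Q_1^-$ and the global $L^2(Q_1)$ piece are handled in exactly the same way, and the proof for $h_\delta$ is identical modulo the global $H^{m_{s-\ell}}(Q_1)$ norm, which one controls by splitting $Q_\delta=Q_\delta^+\cup Q_\delta^-$ and applying the same one-sided Sobolev--H\"older estimates using the $H^{s-\ell}(Q_1^\pm)$ regularity.

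For $f_\delta$ the role of $\delta^{-\ell}$ is played by $\delta^{-\sigma}$, and the critical $L^2(Q_1^\pm)$ estimate hinges on the subtraction of $f(0)$. Since $\sigma\le\min\{1,s-n/2\}$, Morrey's inequality (Theorem \ref{Sobolev}(ii)) gives $f|_{Q_1^\pm}\in C^{0,\sigma}(\overline{Q_1^\pm})$ with norm $\lesssim\norm{f}_{H^s(Q_1^\pm)}$, and the common trace at $\{x^n=0\}$ identifies $f(0)$ as the shared boundary value from either side. Consequently
\[
|f(\delta x)-f(0)|\le C\delta^\sigma|x|^\sigma\norm{f}_{H^s(Q_1^\pm)}
\]
on each side, so dividing by $\delta^\sigma$ yields $\norm{f_\delta}_{L^2(Q_1^\pm)}\lesssim\norm{f}_{H^s(Q_1^\pm)}$. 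For $|\alpha|\ge 1$ the subtracted constant disappears and $\partial^\alpha f_\delta(x)=\delta^{|\alpha|-\sigma}(\partial^\alpha f)(\delta x)$; the same Sobolev--H\"older dichotomy as above then gives $\norm{\partial^\alpha f_\delta}_{L^2(Q_1^\pm)}\lesssim\delta^{s-\sigma-n/2}\norm{f}_{H^s(Q_1^\pm)}\lesssim\norm{f}_{H^s(Q_1^\pm)}$, using $\sigma<s-n/2$. The global $H^2(Q_1)$ piece follows by splitting $Q_\delta$ into its two halves and invoking the full $\Hc^{2,s}$ regularity for $|\alpha|\le 2$.

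The principal technical hurdle is the bookkeeping in the Sobolev-plus-H\"older step: for each admissible pair $(|\alpha|,\ell)$ (respectively $(|\alpha|,\sigma)$ for $f_\delta$) one must verify that the net exponent on $\delta$ is at least $-\ell$ (respectively at least $0$). This is precisely where the hypotheses $s>n/2$, $\sigma\le 1$, and $\sigma<s-n/2$ are consumed, and it explains why $f_\delta$ must be normalised by $\delta^{-\sigma}$.
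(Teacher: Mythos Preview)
Your proposal is correct and uses essentially the same toolkit as the paper: chain rule plus Jacobian scaling, H\"older's inequality on the shrunken cubes $Q_\delta^\pm$, Sobolev embedding on each side, and Morrey's inequality for the $L^2$ control of $f_\delta$ via the subtracted constant. The only organisational difference is that the paper estimates just the top derivative ($|\alpha|=s-\ell$ or $|\alpha|=s$) and the bottom ($|\alpha|=0$, respectively $|\alpha|\le 2$ for the global $H^2$ piece) and then invokes the interpolation Theorem~\ref{interp} to fill in the intermediate orders, whereas you carry out the Sobolev--H\"older dichotomy at every derivative level; both yield the same exponents. Two small points: in your $L^\infty$ branch the net power of $\delta$ is $\delta^{|\alpha|}$ (or $\delta^{|\alpha|-\sigma}$ for $f_\delta$), not $\delta^{s-\ell-n/2}$, though the final bound $\delta^{-\ell}$ still follows since $|\alpha|\ge 0$; and the borderline case $s-\ell-|\alpha|=n/2$ is not covered by either branch of your dichotomy --- the paper sidesteps this (see its footnote) by perturbing $s$ to a nearby non-integer and using fractional Sobolev embedding.
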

\begin{proof}
First, a short calculation shows that
\eqn{feq1}{
\norm{D^s f_\delta}_{L^2(Q^\pm_1)}^2 = \delta^{2(s-\sigma)-n}\norm{D^s f}_{L^2(Q^\pm_\delta)}^2 \leq \delta^{2(s-\sigma)-n} \norm{D^s f}_{L^2(Q^\pm_1)}^2,
}
and in particular, that
\leqn{fep2}{
\norm{D^s f_\delta}_{L^2(Q^\pm_1)} \leq \norm{D^s f}_{L^2(Q^\pm_1)}
}
since $s-\sigma -n/2 \geq 0$ by assumption.

Next, we observe that
\alin{fep3}{
\norm{D^2 f_\delta}_{L^2(Q_1)}^2 &= \delta^{2(2-\sigma)-n} \norm{D^2 f}_{L^2(Q_\delta)}^2 \\
&= \delta^{2(2-\sigma)-n}\left( \int_{Q^-_\delta}|D^2 f| \, d^n x + \int_{Q^+_\delta}|D^2 f|^2 \, d^n x\right)
}
which implies via H\"{o}lder's inequality, Theorem \ref{Holder}, that
\lalign{fep4}{
\norm{D^2 f_\delta}_{L^2(Q_1)}^2 & \leq \delta^{2-n} \bigl( \norm{1}_{L^p(Q^-_\delta)}\norm{D^2 f}_{L^{2q}(Q^-_\delta)}^2 + \norm{1}_{L^p(Q^+_\delta)}\norm{D^2 f}_{L^{2q}(Q^+_\delta)}^2\bigr)\notag \\
& \leq 2^{(n-1)/p}\delta^{2(2-\sigma)-n+n/p} \bigl(\norm{D^2 f}_{L^{2q}(Q^-_1)}^2 + \norm{D^2 f}_{L^{2q}(Q^+_1)}^2\bigr) \notag 
}
for $1/q + 1/p=1$. Choosing $p=n/n-2$ and hence $q=n/2$, the above inequality becomes
\eqn{fep5}{
\norm{D^2 f_\delta}_{L^2(Q_1)}^2 \leq  2^{(n-1)(n-2)/n}\delta^{2(1-\sigma)}\bigl(\norm{D^2 f}_{L^{n}(Q^-_1)}^2 + \norm{D^2 f}_{L^{n}(Q^+_1)}^2\bigr)
}
However, by Sobolev's inequality, Theorem \ref{Sobolev}, we have that
\eqn{fep6}{
\norm{D^2 u}_{L^n(Q^\pm_1)} \lesssim \norm{D^2 f}_{H^{n/2-1}(Q^\pm_1)},
}
and this allows us to conclude that
\leqn{fep7}{
\norm{D^2 f_\delta}_{L^2(Q_1)} \lesssim \bigl(\norm{f}_{H^{s}(Q^-_1)} + \norm{f}_{H^{s}(Q^+_1)}\bigr),
}
since $s> n/2$ and $\sigma \leq 1$.

Next, we observe that
\eqn{fep8}{
|f_{\delta}(x)| \leq |x|^\sigma \frac{|f(\delta x)-f(0)|}{|\delta x - 0|^\sigma} \leq  \max\bigl\{\norm{f}_{C^{0,\sigma}(Q_{\delta}^{+})},\norm{f}_{C^{0,\sigma}(Q_{\delta}^{-})} \bigr\}
\leq \max\bigl\{\norm{f}_{C^{0,\sigma}(Q_{1}^{+})},\norm{f}_{C^{0,\sigma}(Q_{1}^{-})} \bigr\}
}
for all $x\in Q_1$. This together with Sobolev's inequality gives
\eqn{fep9}{
\norm{f_{\delta}}_{L^\infty(Q_1)} \lesssim  \norm{f}_{\Hc^{0,s}(Q_1)},
}
which, in turn, implies, with the help of H\"{o}lder's inequality, that
\leqn{fep10}{
\norm{f_{\delta}}_{L^2(Q_1)} \lesssim  \norm{f}_{\Hc^{0,s}(Q_1)}.
}
The inequalities \eqref{fep2}, \eqref{fep7} and \eqref{fep10} together with interpolation, see Theorem \ref{interp}, then
show that
\eqn{fep11}{
\norm{f_\delta}_{\Hc^{2,s}(Q_1)} \lesssim \norm{f}_{\Hc^{2,s}(Q_1)},
}
while, a short calculation shows that
\leqn{fep12}{
\norm{D^{s-\ell}g_\delta}^2_{L^2(Q^{\pm}_1)} = \delta^{2(s-\ell)-n}\norm{D^{s-\ell}g}^2_{L^2(Q^\pm_\delta)}
\leq \frac{1}{\delta^{2\ell}} \norm{D^{s-\ell}g}^2_{L^2(Q^\pm_1)},
}
since $s>n/2$ implies that $2 s-n>0$.

We now consider the two cases\footnote{We can avoid the case $s-\ell = n/2$ for $n$ even by replacing $s$ by a non-integral $\st$ which is slightly less than
$s$ while using the version of Sobolev's inequality that is valid for the fractional Sobolev spaces. 
}

\bigskip

\noindent \textit{Case 1:} $s-\ell > n/2$

\smallskip

Suppose now that $s-\ell > n/2$. Then
\eqn{fep13}{
\norm{g_\delta}_{L^\infty(Q^\pm_1)} \leq \norm{g}_{L^\infty(Q^\pm_1)} \lesssim \norm{g}_{H^{s-\ell}(Q^\pm_1)}
}
by Sobolev's inequality, and so
\leqn{fep14}{
\norm{g_\delta}_{L^2(Q^\pm_1)} \leq \norm{g}_{L^\infty(Q^\pm_1)}\norm{1}_{L^2(Q^\pm_1)} \lesssim \norm{g}_{H^{s-\ell}(Q^\pm_1)}
}
follows by H\"{o}lder's inequality.

\bigskip

\noindent \textit{Case 2:} $s-\ell < n/2$

\smallskip

Suppose now that $s-\ell < n/2$. Then
\leqn{fep15}{
\norm{g_\delta}_{L^2(Q^\pm_1)} = \delta^{-n/2}\norm{g}_{L^2(Q^\pm_\delta)}
\leq \delta^{-n/2}\norm{1}_{L^{n/(s-\ell)}(Q^\pm_\delta)} \norm{g}_{L^q}(Q^\pm_\delta)
}
for
\eqn{fep16}{
\frac{1}{q} = \frac{1}{2} - \frac{(s-\ell)}{n}
}
by H\"{o}lder's inequality. But
$\norm{1}_{L^{n/(s-\ell)}} = 2^{(n-1)(s-\ell)/n}\delta^{s-\ell}$,
and so, we see from \eqref{fep15} that
\eqn{fep18}{
\norm{g_\delta}_{L^2(Q^\pm_1)} \lesssim \frac{1}{\delta^\ell} \norm{g}_{L^q(Q^\pm_1)}
}
since $s-n/2 > 0$. However,
\eqn{fep19}{
\norm{g}_{L^q(Q^\pm_1)}
 \lesssim \norm{g}_{H^{s-\ell}(Q^\pm_1)}
}
by Sobolev's inequality, and therefore, we have that
\leqn{fep20}{
\norm{g_\delta}_{L^2(Q^\pm_1)} \lesssim \frac{1}{\delta^\ell} \norm{g}_{H^{s-\ell}(Q^\pm_1)}.
}

In either case, the inequalities \eqref{fep14} and\eqref{fep20} when combined with \eqref{fep12} and
interpolation show that
\eqn{fep21a}{
\norm{g_\delta}_{H^{s-\ell}(Q^\pm_1)} \lesssim \frac{1}{\delta^\ell}\norm{g}_{H^{s-\ell}(Q^\pm_1)},
}
and so we see that
\leqn{fep21}{
\norm{g_\delta}_{H^{0,s-\ell}(Q_1)} \lesssim \frac{1}{\delta^\ell}\norm{g}_{H^{0,s-\ell}(Q_1)}.
}

Continuing on, a simple calculation yields
\leqn{fep22}{
\norm{D^{m_{s-\ell}} h_\delta}_{L^2(Q_1)} \lesssim \delta^{m_{s-\ell}-n/2} \norm{D^{m_{s-\ell}} h}_{L^2(Q_1)}. \leq  \delta^{
s-n/2}\frac{1}{\delta^{s-m_{s-\ell}}}\norm{D^{m_{s-\ell}} h}_{L^2(Q_1)}.
}
Four cases $s-\ell = 0$, $s-\ell =1$, $s-\ell =2$ and $s-\ell \geq 3$ now follow.

\bigskip

\noindent\textit{Case 1: $s-\ell = 0$}

\smallskip
If $s=\ell$, then  $m_{s-\ell}=0$ and the estimate
\leqn{fep23}{
\norm{h_\delta}_{L^2(Q_1)} \lesssim  \frac{1}{\delta^s} \norm{h}_{\Hc^{0,0}(Q_1)}
}
is a direct consequence of \eqref{fep22} since $s-n/2 > 0$.

\bigskip

\noindent\textit{Case 2: $s-\ell = 1$}

\smallskip
If $\ell = s-1$, then we see from \eqref{fep22} and $m_1=1$  that
\leqn{fep24}{
\norm{Dh_\delta}_{L^2(Q_1)} \lesssim  \delta^{s-n/2}\frac{1}{\delta^{s-1}}\norm{Dh}_{L^2(Q_1)} \lesssim  \frac{1}{\delta^{s-1}} \norm{Dh}_{\Hc^{1,1}(Q_1)}
}
since $s-n/2 > 0$.

\bigskip

\noindent\textit{Case 3: $s-\ell = 2$}

\smallskip
If $\ell = s-2$, then we see from \eqref{fep22} and $m_2=2$  that
\leqn{fep25}{
\norm{D^2 h_\delta}_{L^2(Q_1)} \lesssim  \delta^{s -n/2}\frac{1}{\delta^{s-2}}\norm{D^2h}_{L^2(Q_1)} \lesssim  \frac{1}{\delta^{s-2}} \norm{D^2h}_{\Hc^{2,2}(Q_1)}
}
since $s-n/2 > 0$.

\bigskip

\noindent\textit{Case 4: $s-\ell \geq 3$}

\smallskip
If $s-\ell \geq 3$, then $m_{s-\ell}=2$ and two cases $s-2-\ell < n/2$ and  $s-2-\ell > n/2$ follow.

\bigskip

\noindent \textit{Case 4a:} $s-2-\ell < n/2$
\smallskip

\alin{fep26}{
\norm{D^2 h_\delta}_{L^2(Q_1)}^2 &= \delta^{4-n} \norm{D^2 h}_{L^2(Q_\delta)}^2 \\
&= \delta^{4-n}\left( \int_{Q^-_\delta}|D^2 h| \, d^n x + \int_{Q^+_\delta}|D^2 h|^2 \, d^n x\right).
}
Using H\"{o}lder's inequality, we can write this as
\lalign{fep27}{
\norm{D^2 h_\delta}_{L^2(Q_1)}^2 & \leq \delta^{4-n} \bigl( \norm{1}_{L^p(Q^-_\delta)}\norm{D^2 h}_{L^{2q}(Q^-_\delta)}^2 + \norm{1}_{L^p(Q^+_\delta)}\norm{D^2 h}_{L^{2q}(Q^+_\delta)}^2\bigr) \notag \\
& \leq 2^{(n-1)/p}\delta^{4-n+n/p} \bigl(\norm{D^2 h}_{L^{2q}(Q^-_1)}^2 + \norm{D^2 h}_{L^{2q}(Q^+_1)}^2\bigr) \label{fep27.1}
}
for $1/q + 1/p=1$. But, we notice that
\leqn{fep28}{
\norm{D^2 h}_{L^{\frac{2n}{n-2(s-2-\ell)}}(Q^\pm_1)} \lesssim
\norm{D^2 h}_{H^{s-2-\ell}(Q^\pm_1)} \lesssim \norm{h}_{H^{s-\ell}(Q^\pm_1)}
}
where in obtaining the first inequality we used Sobolev's inequality,
while
\leqn{fep29}{
\norm{D^2 h_\delta}_{L^2(Q_1)}^2 \lesssim \frac{1}{\delta^{2\ell}} \biggl(\norm{D^2 h}_{L^{\frac{2n}{n-2(s-2-\ell)}}(Q^-_1)}^2 +
\norm{D^2 h}_{L^{\frac{2n}{n-2(s-2-\ell)}}(Q^+_1)}^2\biggr)
}
follows from setting
\eqn{fep30}{
q =\frac{n}{n-2(s-2-\ell)} \AND p =  \frac{n}{2(s-2-\ell)}
}
in the inequality \eqref{fep27.1} and recalling that $s>n/2$. Combining the two inequalities \eqref{fep28} and \eqref{fep29}, we arrive at
\leqn{fep31}{
\norm{D^2 h_\delta}_{L^2(Q_1)} \lesssim \frac{1}{\delta^{\ell}}\norm{h}_{H^{0,s-\ell}(Q_1)} \lesssim
\frac{1}{\delta^{\ell}}\norm{h}_{H^{2,s-\ell}(Q_1)}.
}

\bigskip

\noindent \textit{Case 4b:} $s-2-\ell>n/2$

\smallskip

Suppose now that $s-2-\ell>n/2$. Then
\eqn{fep32}{
\norm{D^2 h_\delta}_{L^\infty(Q^\pm_1)} \lesssim \delta^2 \norm{D^2 h}_{L^\infty(Q^\pm_1)} \lesssim  \norm{D^2 h}_{H^{s-2-\ell}(Q^\pm_1)} \lesssim \norm{h}_{H^{s-\ell}(Q^\pm_1)}
}
by Sobolev's inequality. Using H\"{o}lder's inequality, it is not difficult to see that the above inequality implies that
\leqn{fep33}{
\norm{D^2 h_\delta}_{L^2(Q_1)} \lesssim \norm{h}_{H^{0,s-\ell}(Q_1)} \lesssim
\frac{1}{\delta^{\ell}}\norm{h}_{H^{2,s-\ell}(Q_1)}.
}

In either case, \eqref{fep31} and \eqref{fep33} show that
\leqn{fep34}{
\norm{D^2 h_\delta}_{L^2(Q_1)}  \lesssim
\frac{1}{\delta^{\ell}}\norm{h}_{H^{2,s-\ell}(Q_1)}
}
holds.

From the inequalities \eqref{fep21}, \eqref{fep23}, \eqref{fep24}, \eqref{fep25}, \eqref{fep34} and interpolation, we conclude that
\eqn{fep35}{
\norm{h_\delta}_{\Hc^{m_{s-\ell},s-\ell}(Q_1)} \lesssim \frac{1}{\delta^\ell}\norm{h}_{\Hc^{m_{s-\ell},s-\ell}(Q_1)}.
}

\end{proof}

We will also need the following version of Proposition \ref{scalepropA} for the $H^s(Q^+_1)$ spaces.
Since it can be established using similar arguments, we omit the details.

\begin{prop} \label{scalepropB}
Suppose $0<\delta\leq 1$, $s,\ell\in \Zbb_{\geq 0}$, $n\geq 3$, $0\leq \sigma \leq 1$, $0\leq \sigma < s-n/2$, $s-\ell \geq 0$
$f\in H^s(Q^+_1)$, $g\in H^{s-\ell}(Q^+_1)$ and let
\eqn{scalepropB.2}{
f_\delta(x) = \frac{f(\delta x)-f(0)}{\delta^\sigma} \AND g_\delta(x) = g(\delta x).
}
Then
\alin{scalepropB.1}{
\norm{f_\delta}_{H^{s}(Q^+_1)} &\lesssim  \norm{f}_{H^{s}(Q^+_1)} \\
\intertext{and}
\norm{g_\delta}_{H^{s-\ell}(Q^+_1)} &\lesssim \frac{1}{\delta^\ell}\norm{g}_{H^{s-\ell}(Q^+_1)}.
}
\end{prop}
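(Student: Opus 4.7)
The plan is to mirror the proof of Proposition \ref{scalepropA}, but specialized to the single half-cube $Q_1^+$ so that the interface discontinuity between $Q_1^+$ and $Q_1^-$ plays no role. In particular, the intermediate $H^2$ (or $H^{m_{s-\ell}}$) estimates that were needed in Proposition \ref{scalepropA} to straddle the interface $\{x^n=0\}$ are not required here; only the top-order estimate, the $L^2$ estimate, and interpolation (Theorem \ref{interp}) will be used.

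For the second inequality, I would first carry out the direct scaling computation for the highest-order derivative: since $D^{s-\ell}g_\delta(x) = \delta^{s-\ell}(D^{s-\ell}g)(\delta x)$, a change of variables yields
\[
\|D^{s-\ell}g_\delta\|_{L^2(Q_1^+)}^2 = \delta^{2(s-\ell)-n}\|D^{s-\ell}g\|_{L^2(Q_\delta^+)}^2 \leq \frac{1}{\delta^{2\ell}}\|D^{s-\ell}g\|_{L^2(Q_1^+)}^2,
\]
where the last step uses $s>n/2$ (which follows from $\sigma<s-n/2$). For the $L^2$ bound on $g_\delta$ I would split into the two subcases used in Proposition \ref{scalepropA}. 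When $s-\ell>n/2$, Sobolev's inequality gives $\|g_\delta\|_{L^\infty(Q_1^+)} \leq \|g\|_{L^\infty(Q_\delta^+)} \lesssim \|g\|_{H^{s-\ell}(Q_1^+)}$, from which the $L^2$ bound follows by Hölder. When $s-\ell<n/2$, the computation
\[
\|g_\delta\|_{L^2(Q_1^+)} = \delta^{-n/2}\|g\|_{L^2(Q_\delta^+)} \leq \delta^{-n/2}\|1\|_{L^{n/(s-\ell)}(Q_\delta^+)}\|g\|_{L^q(Q_\delta^+)}
\]
with $1/q = 1/2-(s-\ell)/n$ gives a factor of $\delta^{s-\ell-n/2} \cdot \delta^{n/2-(s-\ell)+(s-\ell)} = \delta^{-\ell}$ after inserting $\|1\|_{L^{n/(s-\ell)}} \lesssim \delta^{s-\ell}$, and then Sobolev's inequality bounds $\|g\|_{L^q(Q_1^+)}$ by $\|g\|_{H^{s-\ell}(Q_1^+)}$. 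Interpolation closes the estimate.

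For the first inequality, the top-order bound is $D^s f_\delta(x) = \delta^{s-\sigma}(D^s f)(\delta x)$, giving
\[
\|D^s f_\delta\|_{L^2(Q_1^+)}^2 = \delta^{2(s-\sigma)-n}\|D^s f\|_{L^2(Q_\delta^+)}^2 \leq \|D^s f\|_{L^2(Q_1^+)}^2,
\]
since $s-\sigma-n/2\geq 0$ by hypothesis. The $L^\infty$ bound uses Morrey's inequality (Theorem \ref{Sobolev}(ii)): because $s>n/2$, $f\in C^{0,\mu}(\overline{Q_1^+})$ for some $\mu\geq \sigma$, so
\[
|f_\delta(x)| \leq |x|^\sigma \frac{|f(\delta x)-f(0)|}{|\delta x|^\sigma} \leq \|f\|_{C^{0,\sigma}(\overline{Q_1^+})} \lesssim \|f\|_{H^s(Q_1^+)}.
\]
Hölder then gives the $L^2$ bound, and interpolation between the $L^2$ and top-order estimates yields $\|f_\delta\|_{H^s(Q_1^+)} \lesssim \|f\|_{H^s(Q_1^+)}$.

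There is no genuine obstacle here; the only point requiring care is bookkeeping the Sobolev/Morrey ranges to ensure $s-\ell=n/2$ is avoided or handled by slightly decreasing $s$ to a non-integer $\st$ and invoking the fractional-order versions of the calculus inequalities, exactly as noted in the proof of Proposition \ref{scalepropA}. All other ingredients (Hölder, Sobolev, Morrey, interpolation) apply to the bounded Lipschitz domain $Q_1^+$ without modification, so the argument is a direct half-cube analogue of the preceding proposition and should be presented as such, omitting the details that are identical.
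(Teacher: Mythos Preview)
Your proposal is correct and matches the paper's own approach: the paper states that Proposition \ref{scalepropB} ``can be established using similar arguments'' to Proposition \ref{scalepropA} and omits the details, which is exactly what you have sketched. Your observation that the intermediate $H^{m_{s-\ell}}$ estimates across the interface are unnecessary on the single half-cube is the right simplification, and the remaining top-order, $L^2$, and interpolation steps (together with the same $s-\ell=n/2$ caveat) are precisely the ingredients the paper has in mind.
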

\bibliographystyle{amsplain}
\bibliography{trans}

\end{document}